\newtheorem{introthm}{Theorem}
\newtheorem{introcorollary}[introthm]{Corollary}
\newtheorem*{remark*}{Remark}
\numberwithin{equation}{section}
\newtheorem{theorem}{Theorem}[section]
\newtheorem{lemma}[theorem]{Lemma}
\newtheorem{proposition}[theorem]{Proposition}
\theoremstyle{definition}
\newtheorem{definition}[theorem]{Definition}
\newtheorem{example}[theorem]{Example}
\theoremstyle{remark}
\newtheorem{remark}[theorem]{Remark}
\newcommand{\mc}{\mathcal}
\newcommand{\R}{\mathbb{R}}
\newcommand{\of}{\circ}
\newcommand{\id}{\operatorname{id}}
\newcommand{\set}[1]{\left\lbrace #1 \right\rbrace}
\newcommand{\N}{\mathbb{N}}
\newcommand{\Z}{\mathbb{Z}}
\newcommand{\abs}[1]{\left| #1 \right|}
\newcommand{\del}{\partial}
\newcommand{\ve}{\varepsilon}
\renewcommand{\vec}[1]{\ensuremath{\mathbf{#1}}}
\title[Normal forms in dimension 3]{Normal forms in a neighborhood of hyperbolic periodic orbits for flows in dimension 3}
\author{Alena Erchenko, Kurt Vinhage, Yun Yang}
\date{}
\begin{document}
\begin{abstract}
 In a neighborhood of a hyperbolic periodic orbit of a volume-preserving flow on a manifold of dimension $3$, we define and show the existence of a normal form for the generator of the flow that encodes the dynamics. If the flow is a contact flow, we show the existence of a normal form for the contact form what results in an improved normal form for its Reeb vector field. Additionally, we present a few rigidity results associated to periodic data for Anosov contact flows derived from the underlying normal form theory. Finally, we establish a new local rigidity result for contact flows on manifolds of dimension $3$ in a neighborhood of a hyperbolic periodic point by finding a new link between the roof function and the return map to a section.
\end{abstract}
\maketitle

\section{Introduction}
In the study of smooth dynamical systems, the dynamics of periodic orbits
has played an important role in the rigidity of a dynamical system associated to invariants of smooth data associated with periodic orbits (for example, \cite{de1986canonical, HuKa, de1992smooth, FH, kalinin2011livvsic, GL19,DSLVY, GLP, BEHLW}, and references therein) and in the existence of smooth coordinates in a neighborhood of the fixed point in which the dynamics is represented by a well-understood class of {\it normal} or {\it resonance forms} \cite{Ste}. In this paper, we blend together both of those features. First, we establish the existence of smooth coordinates in a neighborhood of a hyperbolic periodic orbit of a smooth flow where the vector field of the flow is in a normal form that encode information about the hyperbolicity of the orbit. Then, using those normal forms, we establish several rigidity results associated to the smooth data associated with the periodic orbits. More precisely, we show the following. (See Section~\ref{sec: applications of normal forms} for relevant definitions.)

\begin{introthm}\label{introthm: vector field}
    Let $\mc O$ be a hyperbolic periodic orbit of a $C^\infty$-flow $\Phi=(\varphi^t)_{t\in\mathbb R}$ with generating vector field $X$ on a closed 3-manifold $M$ with period $T$. If $\det(d\varphi^T(p)) = 1$ for some $p \in \mc O$, then there exist a solid torus $C$ and a $C^\infty$ embedding $h\colon C \to M$ such that $h(\R / \Z \times \set{0}) = \mc O$ and 
    $(h^{-1})_*X$ is a hyperbolic conservative resonance field. If $\Phi$ preserves a volume $\mu$, $h$ can be chosen such that $h^*\mu$ is the standard Lebesgue measure on $C$.
\end{introthm}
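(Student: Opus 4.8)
The plan is to pass to the Poincaré return map of $\mc O$, establish a resonance normal form for it (in the area-preserving category when $\mu$ is present), and then rebuild a tubular neighbourhood of $\mc O$ by suspending that normal form back into $M$.

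First I would fix $p\in\mc O$, take a small $C^\infty$ disk $\Sigma$ transverse to $X$ at $p$, and consider the first-return map $P$ — a germ of a $C^\infty$ diffeomorphism of $(\Sigma,p)\cong(\R^2,0)$ — together with its return-time function $\tau\colon\Sigma\to\R_{>0}$, $\tau(p)=T$. With respect to the splitting $T_pM=\R X(p)\oplus T_p\Sigma$ the matrix of $d\varphi^T(p)$ is block upper-triangular with diagonal blocks $1$ and $dP(p)$, so $\det dP(p)=\det d\varphi^T(p)=1$; hyperbolicity of $\mc O$ keeps both eigenvalues of $dP(p)$ off the unit circle, and since their product is $1$ they must be real, say $\lambda$ and $\lambda^{-1}$ with $0<|\lambda|<1$. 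When $\Phi$ preserves the volume $\mu$, the form $\iota_X\mu$ is closed and $\Phi$-invariant, so $\omega:=\iota_X\mu|_\Sigma$ is a $P$-invariant area form on $\Sigma$.

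Next I would apply a Sternberg-type resonance normal form — in the area-preserving category when $\omega$ is present, normalizing $\omega$ to $dx\wedge dy$ by Moser's trick — to get $C^\infty$ coordinates $(x,y)$ on $\Sigma$ near $p$ in which
\[
P(x,y)=\bigl(x\,A(xy),\ y/A(xy)\bigr),\qquad A(0)=\lambda,
\]
the relation $B=1/A$ being exactly the determinant-one condition at every order (in the general case one keeps the two resonant series independent). Simultaneously I would solve the cohomological equation $u-u\circ P=\tau-T$ over the hyperbolic germ $P$: it is solvable $C^\infty$ modulo the resonant monomials $(xy)^k$, so a joint normal form for the pair $(P,\tau)$ — achieved after replacing $\Sigma$ by a graph $\{\varphi^{u(z)}(z)\}$ — lets us assume $\tau\equiv T/a(xy)$ for a positive $a$ with $a(0)=1$ while keeping $P$ in the above form. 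Then, setting $C=\R/\Z\times D^2$, I would consider
\[
X_0=\frac{a(xy)}{T}\Bigl(\frac{\partial}{\partial t}+\log A(xy)\bigl(x\tfrac{\partial}{\partial x}-y\tfrac{\partial}{\partial y}\bigr)\Bigr),
\]
which is divergence-free for Lebesgue, has hyperbolic linearization with multiplier $\lambda$ along $\R/\Z\times\{0\}$, and whose first-return map to $\{t=0\}$ is exactly the normal form of $P$ above; this is the hyperbolic conservative resonance field of the statement. Since $\Phi$ and the flow of $X_0$ now carry the same section data $(P,\tau)$ at $p$, transporting the germ of conjugacy $\iota\colon(D^2,0)\to(\Sigma,p)$ along the two flows produces a $C^\infty$ embedding $h\colon C\to M$ with $h(\R/\Z\times\{0\})=\mc O$ and $(h^{-1})_*X=X_0$. (If $\lambda<0$ the stable and unstable line fields along $\mc O$ are twisted; one either absorbs a $\pi$-rotation into the resonance field or passes to the orientation double cover — the solid torus and the conclusion are unchanged.)

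Finally, for the volume normalization: in the coordinates above $\iota_X\mu$ restricts to $dx\wedge dy$ on $\{t=0\}$, and because $X=\partial_t+\cdots$ there, evaluating $\mu$ on $(\partial_t,\partial_x,\partial_y)$ forces $\mu=dt\wedge dx\wedge dy$ along $\{t=0\}$. Both $h^*\mu$ and $dt\wedge dx\wedge dy$ are invariant under $X_0$ (the latter because $X_0$ is divergence-free), so their ratio is an $X_0$-invariant function equal to $1$ on $\{t=0\}$, and since every integral curve of $X_0$ in $C$ crosses $\{t=0\}$ this ratio is identically $1$. I expect the real obstacle to be the normal-form step: producing the resonance form of $P$ \emph{within} the area-preserving class — so that the suspended field is genuinely conservative — while simultaneously controlling the return-time function; this is precisely where the reciprocal-eigenvalue hypothesis and the link between the roof function and the return map are needed, and the suspension and volume steps are then comparatively soft.
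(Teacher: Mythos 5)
Your treatment of the main (vector-field) statement follows the paper's route essentially verbatim: put the return map in area-preserving Birkhoff--Sternberg resonance form on a transversal, reduce the return time to a function of $xy$ by solving the local cohomological equation up to the resonant monomials $(xy)^k$ (this is the paper's Proposition~\ref{nfc}, whose proof --- formal solution, Borel realization, and the convergent splitting $\phi=\phi^++\phi^-$ --- you quote rather than reprove), replace $\Sigma$ by the section obtained by flowing along the transfer function, and then build the chart by suspension; your ``transport the conjugacy along the two flows'' is exactly the paper's map $H(t,x,y)=\varphi^{t\bar r(xy)}\bigl(h_\Sigma(\psi^{-t}(x,y))\bigr)$, whose periodicity in $t\in\R/\Z$ and intertwining property are the only points left to verify. (Minor bookkeeping: with your choice $0<|\lambda|<1$ you get $\log A(0)<0$, whereas the resonance-field definition requires $g(0)>0$; take $\lambda>1$ or swap $x$ and $y$.)

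The genuine gap is in the volume clause. You claim that since ``$X=\partial_t+\cdots$'' along $\{t=0\}$ one gets $h^*\mu=dt\wedge dx\wedge dy$ there; but your model field is $X_0=\frac{a(xy)}{T}\bigl(\partial_t+\log A(xy)(x\,\partial_x-y\,\partial_y)\bigr)$, so $h_*\partial_t=\bar r(xy)\,X-\log A(xy)\,h_*(x\,\partial_x-y\,\partial_y)$ with $\bar r=T/a$ the normalized roof function and the second summand tangent to the section. Hence, using $h_\Sigma^*(\iota_X\mu)=dx\wedge dy$, one finds $(h^*\mu)(\partial_t,\partial_x,\partial_y)=\bar r(xy)$ on $\{t=0\}$, so the $X_0$-invariant ratio in your argument equals $\bar r(xy)$, not $1$, and $h^*\mu=\bar r(xy)\,dt\wedge dx\wedge dy$ --- Lebesgue only when the roof is constant. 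The conclusion is still attainable, but it needs a further normalization that your argument omits: for instance, post-compose with a fiberwise map $\Psi(t,x,y)=(t,x\sigma(xy),y\sigma(xy))$, which sends hyperbolas to hyperbolas (hence preserves the resonance form, with $f,g$ reparametrized) and has Jacobian $\frac{d}{dz}\bigl(z\sigma(z)^2\bigr)$ depending only on $z=xy$; choosing $z\sigma(z)^2=R^{-1}(z)$ with $R'=\bar r$, $R(0)=0$ absorbs the density $\bar r(xy)$ and makes the pullback of $\mu$ exactly Lebesgue. Without a step of this kind the last sentence of the theorem is not proved.
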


Moreover, if the flow is the Reeb flow associated to a contact form, then we obtain a natural normal form of the contact form itself.

\begin{introthm}
\label{thm:contact-normal}
     Let $\alpha$ be a contact form on a closed 3-manifold $M$, and assume that $\mc O$ is a hyperbolic periodic orbit for the Reeb flow induced by $\alpha$. Then there exists a solid torus $C$ and a smooth embedding $h\colon C \to M$ such that $h(\R / \Z \times \set{0}) = \mc O$ and $h^*\alpha$ is in a hyperbolic resonance form on $C$.
\end{introthm}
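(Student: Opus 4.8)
The plan is to derive Theorem~\ref{thm:contact-normal} from Theorem~\ref{introthm: vector field} applied to the Reeb flow, and then to upgrade the normal form obtained for the Reeb vector field into a normal form for $\alpha$ itself. First I would check that Theorem~\ref{introthm: vector field} applies. Let $R$ be the Reeb vector field of $\alpha$, let $\Phi=(\varphi^t)$ be its flow, and let $T$ be the period of $\mc O$. Since $\mathcal L_R\alpha=\iota_R\,d\alpha+d\,\iota_R\alpha=0$, also $\mathcal L_R\,d\alpha=0$, so $\Phi$ preserves the volume form $\mu:=\alpha\wedge d\alpha$; in particular $(\varphi^T)^*\mu=\mu$ forces $\det d\varphi^T(p)=1$ for every $p\in\mc O$. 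Theorem~\ref{introthm: vector field}, applied to $(\Phi,R,\mu)$, then produces a solid torus $C$ with coordinates $(t,x,y)\in\R/\Z\times D$ and a $C^\infty$ embedding $h_0\colon C\to M$ with $h_0(\R/\Z\times\set{0})=\mc O$ such that $Y:=(h_0^{-1})_*R$ is a hyperbolic conservative resonance field and $h_0^*\mu=\nu$, the standard Lebesgue form $dt\wedge dx\wedge dy$ on $C$.

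Next I would analyze $\beta:=h_0^*\alpha$. It is a contact form on $C$ whose Reeb field is exactly $Y$, since $\iota_Y\beta=h_0^*(\iota_R\alpha)=1$ and $\iota_Y\,d\beta=h_0^*(\iota_R\,d\alpha)=0$, and it satisfies $\beta\wedge d\beta=\nu$. The observation I would exploit is that $d\beta$ is then completely pinned down by the already‑normalized field $Y$: contracting $\beta\wedge d\beta=\nu$ with $Y$ and using $\iota_Y\beta=1$, $\iota_Y\,d\beta=0$ gives
\[
d\beta=\iota_Y(\beta\wedge d\beta)=\iota_Y\nu,
\]
an explicit closed $2$-form assembled from the components of $Y$. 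Conversely, a one‑line computation using $\beta\wedge\nu=0$ shows that \emph{any} $1$-form $\beta$ on $C$ with $d\beta=\iota_Y\nu$ and $\iota_Y\beta=1$ is automatically a contact form with Reeb field $Y$ and $\beta\wedge d\beta=\nu$. So $\beta$ is singled out among the primitives of the fixed $2$-form $\iota_Y\nu$ by the single normalization $\iota_Y\beta=1$, and any two such primitives differ by a closed, $Y$-invariant $1$-form $\gamma$ with $\iota_Y\gamma=0$.

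It then remains to match $\beta$ with the model. I would write down the target hyperbolic resonance form $\alpha_0$ on $C$ — the contact counterpart of the vector-field model of Theorem~\ref{introthm: vector field}, realized as the distinguished primitive of $\iota_{Y_0}\nu$ with $\iota_{Y_0}\alpha_0=1$, where $Y_0$ is the model resonance field — and then exhibit a $C^\infty$ diffeomorphism $g$ of $C$ that fixes $\R/\Z\times\set{0}$, satisfies $g_*Y_0=Y$, preserves $\nu$, and for which moreover $g^*\beta=\alpha_0$. With $h:=h_0\of g$ this gives $h^*\alpha=\alpha_0$, which is the claim. The ``improved'' normal form for the Reeb vector field alluded to in the introduction is then read off from $\alpha_0$: the relation $d\alpha_0=\iota_{Y_0}\nu$ forces identities among the resonance coefficients of $Y_0$ that a general conservative resonance field need not obey.

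The hard part is this last matching step, i.e. controlling how the contact normalization interacts with the vector-field normal form that Theorem~\ref{introthm: vector field} has already fixed. Any $g$ with $g_*Y_0=Y$ and $g^*\nu=\nu$ automatically makes $g^*\beta$ another primitive of $\iota_{Y_0}\nu$ with $\iota_{Y_0}(g^*\beta)=1$, so $g^*\beta$ and $\alpha_0$ differ by a closed, $Y_0$-invariant $1$-form annihilated by $Y_0$; one must show that the remaining freedom in the choice of $g$ — the normalizer of the resonance structure among orbit-fixing, $\nu$-preserving $C^\infty$ diffeomorphisms — is large enough to absorb that residual $1$-form, and that the adjustment can be made smoothly. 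This amounts to solving the finite list of cohomological matching equations for the resonance coefficients of $\beta$ while staying inside the normalizer of $Y$. Hyperbolicity of $\mc O$ — which makes the relevant cohomological equations solvable along the contracting and expanding directions, leaving only the genuine resonance terms — is precisely what makes this possible, but the bookkeeping of which coefficients survive and how the residual closed-form ambiguity is spent is the technical heart of the argument.
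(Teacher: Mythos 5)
Your reductions at the start are fine: the Reeb flow preserves $\alpha\wedge d\alpha$, Theorem~\ref{introthm: vector field} applies, the identity $d\beta=\iota_Y\nu$ follows from $\iota_Y\beta=1$, $\iota_Yd\beta=0$, and the converse characterization of primitives of $\iota_Y\nu$ with $\iota_Y\beta=1$ is a correct one-line computation. But the proof stops exactly where the theorem begins. The step you label ``the technical heart'' --- producing a model resonance contact form $\alpha_0$ with Reeb field $Y_0$ and a $\nu$-preserving, core-fixing diffeomorphism $g$ with $g_*Y_0=Y$ and $g^*\beta=\alpha_0$ --- is asserted to be solvable ``because of hyperbolicity,'' with no argument. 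This is a genuine gap, and not a routine one: the resonance data of a conservative resonance field consists of two essentially independent invariants, the resonance form of the return time $\bar r=1/f$ and the resonance form of the return map (generated by $g(xy)$), and these cannot be matched to an arbitrary model by conjugation (the diagonal Taylor coefficients of the return-time cocycle and the Birkhoff invariants of the return map are conjugacy/cohomology invariants, cf.\ Lemma~\ref{lem:cocycle-coefficeints}). For a Reeb field these two pieces of data are \emph{not} independent --- the contact condition forces the compatibility $dr=-xy\,d\eta$, i.e.\ $\bar r'(z)=-z\eta'(z)$, which is exactly what makes a model $\theta$ with $\theta'=\eta$, $\bar r=\theta-z\theta'$ exist --- and your outline never extracts this relation from $d\beta=\iota_Y\nu$. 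Without it, the ``finite list of cohomological matching equations'' has no reason to admit a solution, and hyperbolicity alone does not provide one. (The residual ambiguity by an exact, $Y_0$-invariant $1$-form $du$ with $Y_0u=0$ is the easier part; it can be absorbed by a Moser-type flow along the Reeb direction as in Lemma~\ref{lem:contact-uniqueness}, but that presupposes the matching of Reeb fields and volumes which you have not established.)

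For comparison, the paper does not route through Theorem~\ref{introthm: vector field} at all. It works with $\alpha$ directly: a Sternberg section is adjusted by flowing along the Reeb field (solving $d\tau=-a\,dx-b\,dy$ on the disc) so that the section chart pulls $\alpha$ back to $\tfrac12(x\,dy-y\,dx)$ (Lemma~\ref{lemma: nice section}); then a Stokes-theorem argument on the flow-swept rectangle gives $dr=F^*\beta-\beta=-xy\,d\eta$ (Lemma~\ref{lem:dr-formula}), so the return time is automatically in resonance form and tied to the return map --- no cocycle equation needs to be solved in the contact case; finally the explicit chart $H(t,x,y)=\varphi^{t\bar r(xy)}(h(\psi^{-t}(x,y)))$ is shown by direct computation to satisfy $H^*\alpha=(\bar r(xy)+xy\,\eta(xy))\,dt+\tfrac12(x\,dy-y\,dx)$. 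If you want to salvage your route, you would essentially have to reprove this compatibility lemma in the coordinates supplied by Theorem~\ref{introthm: vector field} and then carry out the normalizer/Moser bookkeeping; as written, the proposal defers precisely that content.
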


We note that in the topological category, the Hartman-Grobman theorem gives a complete answer for periodic points both diffeomorphisms and flows: one may always find H\"older continuous coordinates in a neighborhood of the fixed point which linearizes the dynamics (\cite{hartman1960lemma, grobman39homeomorphism}). However, the topological solution has its limitations: several global dynamical features can be surprisingly linked to invariants of smooth data associated with periodic orbits, which are not detected by H\"older coordinates.

Also, traditionally, the Birkhoff normal form has been formulated for the Poincar\'{e} return map associated with a periodic orbit, providing a simplified model of the local discrete-time dynamics. However, the return map captures only a lower-dimensional view of the flow and does not fully encode the continuous-time dynamics in a tubular neighborhood of a periodic orbit. In this paper, we study the normal form of the vector field itself around the periodic orbits. This normal form involves the whole neighborhood of the periodic orbit, not only the Poincar\'{e} return map on the section, which provides a more complete understanding of the local geometry and dynamics near periodic orbits.

Moreover, as it will be seen from the applications, the normal forms proposed in this paper indeed carry the information about the hyperbolicity in comparison to the normal forms in \cite[Lemma 2.3]{HWZ,HWZ_correction}.

\begin{introcorollary}\label{coro C}
Let $\Phi$ be a smooth Anosov contact flow on a $3$-manifold. Assume that for every periodic orbit $\gamma$, there exists a section $\Sigma_\gamma$ such that the local return time to $\Sigma_\gamma$ is tangent to a constant up to the fourth order at $\gamma$, then $\Phi$ is smoothly conjugated to the geodesic flow of a constant curvature surface, or one of its canonical time changes.
\end{introcorollary}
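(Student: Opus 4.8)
The plan is to read the hypothesis through the resonance normal forms of Theorem~\ref{thm:contact-normal} and Theorem~\ref{introthm: vector field}, convert it into the vanishing of the leading resonance invariants at every periodic orbit, globalize this periodic‑orbit information by a Liv\v{s}ic‑type argument, and conclude with the classification of contact Anosov flows on $3$-manifolds whose invariant splitting is smooth.

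\emph{From the roof function to resonance data.} Fix a periodic orbit $\gamma$ of period $T_\gamma$. Since the Poincar\'e return map of a contact flow is area preserving, $\det d\varphi^{T_\gamma}=1$, so Theorem~\ref{introthm: vector field} applies: in a solid torus around $\gamma$ the generator becomes a hyperbolic conservative resonance field, roughly of the shape
\[
X=\Bigl(\tfrac1{T_\gamma}+a_\gamma(xy)\Bigr)\partial_\theta+\bigl(\lambda_\gamma+b_\gamma(xy)\bigr)\bigl(x\partial_x-y\partial_y\bigr)+(\text{flat at }\gamma),
\]
with $a_\gamma,b_\gamma$ vanishing at $0$ and $\lambda_\gamma>0$; here $u=xy$ is a first integral of $X$ modulo flat terms, and $\log\mu_\gamma=\lambda_\gamma T_\gamma$ is the Lyapunov exponent of $\gamma$. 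Hence, for any transverse section, the local return time equals a function $\tau_\gamma(u)$ up to a flat term, and the return map equals the Birkhoff normal form $(x,y)\mapsto\bigl(g_\gamma(u)x,\ g_\gamma(u)^{-1}y\bigr)$ up to a flat term, with $g_\gamma(u)=\exp\bigl((\lambda_\gamma+b_\gamma(u))\,\tau_\gamma(u)\bigr)$; this identity is the announced link between the roof function and the return map. Two points matter. First, changing the section alters $\tau_\gamma$ only by a coboundary $s-s\circ P_\gamma$, and since the return map acts trivially on the action $u$, such coboundaries have no component on powers of $u$; therefore the jet of $\tau_\gamma$ in $u$ — in particular the property ``$\tau_\gamma$ is tangent to a constant up to order $4$'' — is an invariant of $\gamma$, independent of the section. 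Second, that property means the coefficients of $u$ and $u^2$ in $\tau_\gamma$ vanish, which through the link pins down the low‑order coefficients of $b_\gamma$, hence of $g_\gamma$: at every periodic orbit the finitely many resonance invariants of $X$ of degree $\le 4$ agree with those of the geodesic flow of a constant‑curvature surface.

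\emph{Globalization.} Reading these resonance invariants along orbits via the canonical jets of the normal‑form coordinates (as in the normal‑form theory of Guysinsky--Katok and Kalinin--Sadovskaya) produces a H\"older cocycle over the flow whose periodic data has just been shown to be trivial. By the Liv\v{s}ic theorem the cocycle is a coboundary; solving the resulting cohomological equations and bootstrapping regularity turns the along‑orbit transverse linearization into a globally defined smooth object, which forces the weak stable and weak unstable foliations, hence the Anosov splitting, to be $C^\infty$.

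\emph{Classification.} A contact Anosov flow on a closed $3$-manifold with $C^\infty$ invariant splitting falls under Ghys's classification of codimension‑one Anosov flows with differentiable stable foliation; its only models are suspensions of hyperbolic toral automorphisms and time changes of geodesic flows of hyperbolic surfaces, and the suspension case is excluded since a hyperbolic toral automorphism admits no invariant $1$-form with non‑degenerate differential, so its mapping torus carries no invariant contact form. Thus $\Phi$ is smoothly conjugate to a time change of the geodesic flow of a surface of constant negative curvature, and tracking the time change produced in the globalization step — constrained by the contact condition — identifies it as a canonical one, which is the assertion.

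\emph{Main obstacle.} The crux is the globalization step: upgrading finite‑order vanishing of periodic resonance data to global smoothness of the splitting. One must check that order $4$ is enough — that the invariants controlled by the $4$-jet of the roof function already exclude every variable‑curvature competitor and correctly initialize the bootstrap — and one must run Liv\v{s}ic theory and the regularity bootstrap for the nonlinear jet cocycle in the presence of the flat remainders intrinsic to resonance normal forms. The section‑independence of the roof jet established above is a smaller but necessary point, ensuring that the hypothesis is a property of $\Phi$ alone.
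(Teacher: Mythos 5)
There is a genuine gap at the central step, namely the passage from the hypothesis on the return time to control of the return map. Your ``link'' $g_\gamma(u)=\exp\bigl((\lambda_\gamma+b_\gamma(u))\,\tau_\gamma(u)\bigr)$ is just the generic relation between the coefficients of a conservative resonance field and its Poincar\'e data, and it runs the wrong way: if $\tau_\gamma$ is tangent to a constant, this identity places no constraint whatsoever on $b_\gamma$ (one simply gets $g_\gamma(u)=\exp\bigl((\lambda_\gamma+b_\gamma(u))T_\gamma\bigr)$ modulo flat terms, with $b_\gamma$ arbitrary). Indeed, for a constant-time suspension of a nonlinear area-preserving Anosov map the roof function is constant while the Birkhoff invariants of the return map are arbitrary, so no argument that uses only this generic identity can ``pin down the low-order coefficients of $b_\gamma$.'' What makes the corollary true is the contact-specific relation between roof and base: in the normal form of Theorem~\ref{thm:contact-normal} both are governed by the single function $\theta$, and Lemma~\ref{lem:dr-formula} gives $dr=-xy\,d\eta$, i.e.\ $\bar r(z)=\theta(z)-z\theta'(z)$, whence $\bar r''(0)=-\theta''(0)$ is exactly the Anosov class (Remark~\ref{rem:r-derivatives}, Proposition~\ref{prop:cocycle-data}). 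With that input, fourth-order tangency of the return time (which, as you correctly argue via the cohomology-invariance of the diagonal coefficients, Lemma~\ref{lem:cocycle-coefficeints}, is independent of the chosen section) kills the $x^2y^2$ coefficient of $\bar r$ and hence the Anosov class at every periodic orbit. Without invoking this contact link your deduction that ``the resonance invariants of degree $\le 4$ agree with those of constant curvature'' is unsupported, and your own worry about ``whether order $4$ is enough'' is a symptom of this missing ingredient: order $4$ is precisely enough because the order-$4$ diagonal coefficient \emph{is} the Anosov class.

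Concerning the remaining steps: the ``globalization'' you describe as the crux does not need a new Liv\v{s}ic/jet-cocycle argument with a regularity bootstrap; it is exactly the Hurder--Katok theorem quoted as Theorem~\ref{theorem HK}, which states that vanishing of the Anosov class at all periodic orbits of a transitive Anosov flow on a $3$-manifold is equivalent to $C^\infty$ smoothness of the weak distributions. Likewise, the contact case of Ghys's theorem (Theorem~\ref{theoreme ghys}) already yields directly that the flow is smoothly conjugate to a canonical time change of a hyperbolic geodesic flow, so the suspension case does not have to be excluded by hand and the identification of the time change as canonical is part of the cited statement rather than something to be ``tracked'' through your bootstrap. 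Once the contact relation above is put in place, the proof is the short concatenation: hypothesis $\Rightarrow$ $\bar r''(0)=0$ for the section of Lemma~\ref{lemma: nice section} $\Rightarrow$ Anosov class vanishes at every periodic orbit $\Rightarrow$ Theorem~\ref{theorem HK} $\Rightarrow$ Theorem~\ref{theoreme ghys}.
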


\begin{introcorollary}\label{coro D}
    Let $(S,g)$ be a closed smooth Riemannian surface with negative Gaussian curvature, and let $\Phi = (\varphi^t)_{t \in \R}$ be its geodesic flow on the unit tangent bundle $T^1S$. Then the metric $g$ has constant curvature if and only if for every closed geodesic $\gamma$, there exists a section $\Sigma_\gamma \subset T^1S$ such that $\varphi^T\Sigma_\gamma$ is tangent to $\Sigma_\gamma$ up to the fourth order at $\gamma$, where $T$ is the period of $\gamma$.
\end{introcorollary}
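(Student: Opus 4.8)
The plan is to reduce Corollary D to Corollary C. The geodesic flow $\Phi = (\varphi^t)$ of a closed negatively curved surface $(S,g)$ is a smooth Anosov flow on the $3$-manifold $T^1S$, and it is the Reeb flow of the canonical contact form (the restriction to $T^1S$ of the tautological $1$-form on $T^*S$); hence $\Phi$ is a smooth Anosov contact flow, and Corollary C becomes applicable as soon as we match its hypothesis --- existence, for each periodic orbit $\gamma$, of a section on which the local return time is tangent to a constant to fourth order --- with the hypothesis of Corollary D that $\varphi^T\Sigma_\gamma$ be tangent to $\Sigma_\gamma$ to fourth order at $\gamma$.

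The heart of the argument is this translation, which is the promised link between the roof (return-time) function and the return map to a section. Fix a periodic orbit $\gamma$ of period $T$, a local section $\Sigma_\gamma$ through a point $p \in \gamma$, and let $P$ denote the first-return map of $\Sigma_\gamma$ near $p$, with return time $\tau$, so that $P(p) = p$ and $\tau(p) = T$. From $P(x) = \varphi^{\tau(x)}(x)$ we get $\varphi^T(x) = \varphi^{\,T-\tau(x)}\bigl(P(x)\bigr)$ for $x \in \Sigma_\gamma$ near $p$. In a flow-box chart around $p$ in which $X = \partial_\zeta$ and $\Sigma_\gamma = \{\zeta = 0\}$, this identity says that, after reparametrizing $\Sigma_\gamma$ by $P$, the surface $\varphi^T\Sigma_\gamma$ is exactly the graph $\zeta = T - \tau \circ P^{-1}$ over $\Sigma_\gamma$. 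Consequently, $\varphi^T\Sigma_\gamma$ is tangent to $\Sigma_\gamma$ to order $4$ at $p$ if and only if $\tau - T$ vanishes to order $4$ at $p$ --- that is, $\tau$ is tangent to the constant $T$ to fourth order. I expect the care needed to pin down the meaning of ``tangent to fourth order'' for a pair of surfaces and to verify that the equivalence does not depend on the auxiliary choices (chart, reparametrization) to be the main --- if largely mechanical --- technical point.

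Granting the translation, the ``only if'' direction is immediate: the hypothesis of Corollary D now provides, for every $\gamma$, a section on which the return time is tangent to a constant to fourth order, so Corollary C forces $\Phi$ to be smoothly conjugate to the geodesic flow of a constant-curvature surface, or to one of its canonical time changes. A topological conjugacy (a fortiori a smooth one) carries weak-stable leaves to weak-stable leaves, since these are defined purely dynamically, and a time change leaves the weak-stable and weak-unstable foliations unchanged; as the model flow is algebraic, its weak foliations are real-analytic, so those of $\Phi$ are $C^\infty$. By the rigidity theorem for geodesic flows of negatively curved surfaces whose weak (horocycle) foliations, equivalently whose Anosov splitting, are smooth (Ghys; see also Hurder--Katok, Benoist--Foulon--Labourie), $g$ then has constant curvature. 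Alternatively one may reach the same conclusion through marked-length-spectrum rigidity for negatively curved surfaces (Otal, Croke), a smooth conjugacy matching the marked length spectra up to the global factor introduced by a time change.

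For the ``if'' direction, suppose $g$ has constant curvature; rescaling the metric to curvature $-1$ alters neither the orbits of $\Phi$ nor the validity of the tangency statement, so we may identify $T^1S$ with $\Gamma\backslash PSL(2,\R)$ in such a way that $\varphi^t$ is right translation by $a_t = \mathrm{diag}(e^{t/2}, e^{-t/2})$. A closed geodesic of length $\ell$ is represented by a point $\Gamma g$ with $g a_\ell g^{-1} \in \Gamma$; letting $N$ and $\bar N$ be the unstable and stable horocycle subgroups, take $\Sigma_\gamma$ to be a small piece of $\Gamma g N \bar N$. This is a genuine local section at $p = \Gamma g$ because $\mathfrak n \oplus \bar{\mathfrak n}$ is complementary to the Lie algebra $\mathfrak a$ of the flow direction. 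Since $a_t$ normalizes both $N$ and $\bar N$ and $g a_\ell g^{-1} \in \Gamma$, a one-line computation gives $\varphi^\ell(\Gamma g n \bar n) = \Gamma g\,(a_{-\ell} n a_\ell)(a_{-\ell} \bar n a_\ell) \in \Gamma g N \bar N$; hence $\varphi^\ell$ maps $\Sigma_\gamma$ into the surface $\Gamma g N \bar N$, and being a diffeomorphism fixing $p$ it carries the germ of $\Sigma_\gamma$ at $\gamma$ onto itself. In particular $\varphi^\ell\Sigma_\gamma$ is tangent to $\Sigma_\gamma$ to fourth (indeed every) order at $\gamma$, which completes the proof modulo the translation lemma of the second paragraph.
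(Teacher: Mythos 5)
Your proposal is correct and follows essentially the same route as the paper: the translation between fourth-order tangency of $\varphi^T\Sigma_\gamma$ to $\Sigma_\gamma$ and the return time being tangent to a constant is exactly the paper's Lemma~\ref{tangency of sections}, the rigidity direction is Corollary~\ref{coro C} combined with Theorem~\ref{theoreme ghys}, and your homogeneous-space section $\Gamma g N\bar N$ with constant return behavior is the same construction as the paper's Example~\ref{ex:homogeneous} (just parametrized by $N\bar N$ instead of the explicit matrices $p(x,y)$). No gaps.
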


Finally, a new rigidity feature of contact flows is revealed through making the local form analysis on tubular neighborhoods, rather than the study of the Poincar\'{e} map. Imprecisely, we conclude that the roof function and the return maps each determine the contact form (and hence each other). More precisely, we obtain the following result (see Section~\ref{section: tangency definition} and Definition~\ref{def: cocycle} for relevant definitions). 

\begin{introthm}
\label{thm:linear-rigidity}
        Let $\mc O$ be a hyperbolic periodic orbit of the smooth Reeb flow $\Phi$ of a contact form $\alpha$ on a 3-manifold. Then the following are equivalent: 
    \begin{enumerate}[label=(\alph*)]
      \item $\alpha$ is a linearizable contact form.
    \item The local first return map for $\Phi$ is linearizable.
    \item The local first return map for $\Phi$ is $C^\infty$ tangent to a linear map in some coordinates.
    \item The local return time of $\Phi$ is locally cohomologous to a constant.
    \item The local return time of $\Phi$ is locally cohomologous to a function which is $C^\infty$ tangent to a constant.
    \end{enumerate}
\end{introthm}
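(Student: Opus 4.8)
The plan is to prove the five conditions equivalent along the scheme $(a)\Rightarrow(b)\Rightarrow(c)\Rightarrow(b)$, $(a)\Rightarrow(d)\Rightarrow(e)\Rightarrow(d)$, $(b)\Rightarrow(a)$, $(d)\Rightarrow(a)$, using the contact normal form of Theorem~\ref{thm:contact-normal} as the backbone and closing the two nontrivial loops by a Sternberg-type linearization and by solvability of a flat cohomological equation for germs at a hyperbolic fixed point. Fix the embedding $h\colon C\to M$ of Theorem~\ref{thm:contact-normal}, so that $h^*\alpha$ is in a hyperbolic resonance form, and let $\mathbf c=(c_k)$ be the list of resonance coefficients along $\mc O$ occurring in that form, with $\mathbf c=0$ the linear model. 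We read ``$\alpha$ is a linearizable contact form'' as ``$\mathbf c=0$'', which by the essential uniqueness of the resonance form (part of the normal-form package behind Theorem~\ref{thm:contact-normal}) is equivalent to $\alpha$ being contactomorphic near $\mc O$ to the linear model. In these coordinates the Reeb field of $h^*\alpha$ is explicit, the slice $\Sigma_0=h(\{z=0\})$ is transverse to it, and the local first-return map $F$ and local return time $\tau$ to $\Sigma_0$ are computable power series in $\mathbf c$. Observe that $F$ fixes $p=\mc O\cap\Sigma_0$, has hyperbolic linear part $L$ with eigenvalues $\lambda^{\pm1}$ ($|\lambda|>1$), and preserves the area form $d\alpha|_{\Sigma_0}$, since the Reeb flow preserves $d\alpha$.

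The easy directions are read off the linear model: for $\mathbf c=0$ one gets $F=L$ and $\tau\equiv T$, so $F$ is linearizable and $\tau$ is cohomologous to a constant (with zero transfer function), giving $(a)\Rightarrow(b)$ and $(a)\Rightarrow(d)$; and $(b)\Rightarrow(c)$, $(d)\Rightarrow(e)$ are immediate weakenings. For $(c)\Rightarrow(b)$: if $F$ is $C^\infty$ tangent to a linear map in some smooth chart on $\Sigma_0$, then in that chart $F$ coincides to infinite order with a hyperbolic linear map conjugate to $L$, hence is formally conjugate to its linear part, and Sternberg's linearization theorem for hyperbolic germs then yields a $C^\infty$ conjugacy of $F$ to $L$. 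For $(e)\Rightarrow(d)$: if $\tau$ is $F$-cohomologous to some $\tilde\tau$ that is $C^\infty$ tangent to a constant $c$, then $\tilde\tau-c$ is flat at $p$; for a hyperbolic germ the obstructions to solving $\tilde\tau-c=u\circ F-u$ are the resonant Taylor coefficients of $\tilde\tau-c$ (those carried by the monomials $(xy)^k$ in normalized coordinates), all of which vanish for a flat germ, and the flat part is then an honest smooth coboundary by the usual stable/unstable summation; hence $\tau$ is cohomologous to $c$.

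The remaining implications $(b)\Rightarrow(a)$ and $(d)\Rightarrow(a)$ are the new content. One compares the resonance form of $h^*\alpha$ with (i) the Birkhoff resonance normal form of the area-preserving hyperbolic germ $F$, which by area-preservation carries only resonant terms supported on powers of $xy$, and (ii) the resonant part of the $\infty$-jet of $\tau$ along $\mc O$, and shows that the induced maps $\mathbf c\mapsto(\text{resonant part of }F)$ and $\mathbf c\mapsto(\text{resonant part of }\tau)$ are triangular and invertible for the filtration by order --- each $c_k$ is recovered from a finite jet of $F$, respectively of $\tau$. Granting this, $F$ is conjugate to $L$ $\iff$ its resonant part vanishes $\iff$ $\mathbf c=0$ $\iff$ $\alpha$ is linearizable, which is $(b)\Rightarrow(a)$; and $\tau$ is $F$-cohomologous to a constant $\iff$ the cohomologically surviving part of its jet vanishes $\iff$ $\mathbf c=0$, which is $(d)\Rightarrow(a)$. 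This simultaneously re-derives $(a)\Leftrightarrow(b)\Leftrightarrow(d)$ and exhibits the advertised link: roof function and return map are linearizable, respectively cohomologically trivial, precisely when the same resonance data of the contact form vanishes, so each determines the other.

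We expect the main obstacle to be exactly this last step: organizing the resonance form of Theorem~\ref{thm:contact-normal} so that both the resonant part of the return map and the resonant jet of the return time can be extracted, and verifying the triangularity and invertibility of the two correspondences; this amounts to a careful bookkeeping of how the $z$-integration defining $F$ and $\tau$ interacts with each resonance term of $h^*\alpha$, and it is here that ``the roof function determines the return map and conversely'' is genuinely proved. The other ingredients are classical --- Sternberg linearization and the solvability of the flat cohomological equation for hyperbolic germs, and the essential uniqueness of the resonance form --- and a minor technical point, if one wants the conjugacies and cohomologies in (a)--(e) to respect the natural area/volume, is to keep all germ coordinate changes symplectic, which the area-preserving refinements of Sternberg's and Moser's theorems permit.
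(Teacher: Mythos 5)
Your easy directions, your (c)$\Rightarrow$(b) via Sternberg/\cite[Theorem 6.6.5]{KH}, and your (e)$\Rightarrow$(d) via ``resonant obstructions vanish, flat part is a coboundary'' are sound and essentially parallel the paper's chain (the paper routes (e)$\Rightarrow$(c)$\Rightarrow$(b) instead, using Lemma~\ref{lem:cocycle-coefficeints} and the identity $\bar r(z)=\theta(z)-z\theta'(z)$, but this is a mild variant). The genuine gap is in closing the loop back to (a). You read ``$\alpha$ is linearizable'' as ``the resonance coefficients $\mathbf c$ vanish'' and appeal to an ``essential uniqueness of the resonance form'' to identify the two; but vanishing of resonance coefficients is a statement about the $\infty$-jet of $\theta$ along $\mc O$, while linearizability of $\alpha$ is an exact statement, and no such uniqueness is part of Theorem~\ref{thm:contact-normal} (the normal form is not unique, and a $\theta$ that is merely $C^\infty$ tangent to $r_0+\lambda z$ is not by definition linear). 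Consequently your ``triangular and invertible'' jet correspondence between $\mathbf c$, the resonant part of $F$, and the resonant jet of $\tau$ --- which is indeed easy and already implicit in Proposition~\ref{prop:cocycle-data} and Lemma~\ref{lem:dr-formula} --- only yields (b) or (d) $\Rightarrow$ ``$F$ is $C^\infty$ tangent to linear and $\theta$ has linear jet.'' It does not yield (a), which is exactly what remains to be proved.

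What is actually needed, and what the paper supplies, is the following two-step mechanism for (b)$\Rightarrow$(a): after linearizing the return map $F$ on a section $\Sigma$, one must further find a local diffeomorphism \emph{commuting with the linear map} that carries the specific $F$-invariant area form $d\alpha|_\Sigma$ (which the contact form determines exactly, not up to scale) to the standard form $dx\wedge dy$; this is Lemma~\ref{lem:std-area-form}, an $F$-equivariant Moser argument that works precisely because the centralizer of a hyperbolic linear map is large, and Example~\ref{ex:nonlinear-centralizer} shows it fails for nonlinear return maps. Only then can one invoke the reconstruction of the contact form from the pair (return map, transversal area form) and the period, i.e.\ the implication (2)$\Rightarrow$(1) of Theorem~\ref{thm E} (Lemma~\ref{return-to-pullback}, built on Lemma~\ref{lem:dr-formula}), to conclude that $\alpha$ pulls back to the linear model. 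Your closing remark that one need only ``keep all germ coordinate changes symplectic'' via area-preserving refinements of Sternberg/Moser treats this as a minor technicality, but it is the crux: the required normalization is of a \emph{given} invariant 2-form by an element of the centralizer of $F$, not an area-preserving linearization, and it is exactly the point at which linearity of $F$ (hypothesis (b)) is used. Without this step neither (b)$\Rightarrow$(a) nor (d)$\Rightarrow$(a) is established, so the proposal as written does not prove the theorem.
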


\subsection*{Organization of the paper and outline of proofs}

In Section~\ref{sec: applications of normal forms}, we recall the theory of resonances for linear maps and give our definition of normal forms for a conservative vector field and a contact form in a neighborhood of a hyperbolic periodic point. In Section~\ref{sec: data from normal forms}, we express dynamical data (e.g., the Lyapunov exponents, the Anosov and Foulon-Hasselblatt classes, and the return time and the return map to a section) associated to the periodic point in terms of functions appearing in the normal form. In Section~\ref{sec: Preliminaries}, we recall definitions associated to Poincar\'e maps of a flow and prove the criterion of two codimension one embedded submanifolds being tangent up to some order (Lemma~\ref{tangency of sections}). 

In Section~\ref{sec: normal forms for vector fields}, we prove Theorem~\ref{introthm: vector field}. Our approach is the following. First, we use the Birkhoff normal form for the Poincar\'e map on a section to find the normal form for the return time (Section~\ref{sec: normal form for cocycles}) by solving a cohomological equation similar to methods in~\cite{KH}. In particular, we start with showing existence of a formal power series solution, then, using it, we construct a smooth solution. Afterwards, in Section~\ref{sec: coordinates}, we modify our initially chosen section so that the return time of the new section is in the resonance form. Then, we construct a coordinate system in a neighborhood of a hyperbolic periodic orbit using the coordinates on the new section. The key feature of the coordinate system on a neighborhood of a hyperbolic periodic orbit is that changing the time parameter corresponds to moving along the flow.  

In Section~\ref{sec:normal form for contact}, we prove Theorem~\ref{thm:contact-normal} by working directly with the contact form instead of its Reeb vector field. In particular, we modify the initial choice of a section so that the contact form induces the standard area form on the new section (Lemma~\ref{lemma: nice section}). As a result, we find a new connection between the return time and the Poincar\'e map (Lemma~\ref{lem:dr-formula}). Finally, we again construct a coordinate system so that changing the time parameter corresponds to moving along the flow (Section~\ref{sec:contact-build}).  In Section~\ref{sec:rigidity}, we prove some rigidity results (Corollaries \ref{coro C} and \ref{coro D}) combining the features of the normal forms and the known rigidity results recalled in Section~\ref{sec: known rigidity}.

In Section~\ref{sec: base-roof rigidity}, we establish the following general version of a local rigidity phenomenon for contact flows on manifolds of dimension $3$ by exploring the consequences of the relation between the return time and the Poincar\'e map established in Lemma~\ref{lem:dr-formula}.

\begin{introthm}\label{thm E}
    Let $\Phi$ and $\Psi$ be two $C^\infty$ Reeb flows determined by contact forms $\alpha_1$ and $\alpha_2$, respectively. Consider hyperbolic periodic orbits $\mc O_1$ of $\Phi$ and $\mc O_2$ of $\Psi$. 
    
    The following are equivalent:

    \begin{enumerate}
        \item There exist tubular neighborhoods $C_1$ and $C_2$ of $\mc O_1$ and $\mc O_2$, respectively, and a $C^\infty$ diffeomorphism $H\colon C_1 \to C_2$ such that $H^*\alpha_2 = \alpha_1$.
        \item $\mc O_1$ and $\mc O_2$ have the same prime period, and there exist two sections $\Sigma_i$, $i=1,2$, which pass through some $p_i \in \mc O_i$, respectively, with the following property. For $i = 1,2$, the Poincar\'{e} maps 
        $F_i\colon \Sigma_i \to \Sigma_i$ associated to the Reeb flows of $\alpha_i$ are conjugated by a $C^\infty$-diffeomorphism $H_\Sigma\colon \Sigma_1 \to \Sigma_2$ such that $H_\Sigma^*d\alpha_2|_{\Sigma_2} = d\alpha_1|_{\Sigma_1}$.
        \item $\mc O_1$ and $\mc O_2$ have the same Lyapunov exponents, and there exist two sections $\Sigma_i$, $i=1,2$, of $\mc O_i$ with parametrization $h_i\colon D^2_\ve \to \Sigma_i$ such that
        
        \begin{itemize}
            \item $h_i^*\alpha_i|_{\Sigma_i} = \frac{1}{2}(x \, dy - y \, dx)$, 
            \item the local return maps induced by $h_i$ are in Birkhoff-Sternberg normal form (but not necessarily the same), and
            \item if $\hat{r}_i\colon \Sigma_i \to \R_+$ is the return time function associated to the Reeb flow of $\alpha_i$, then $\hat{r}_1 \of h_1 = \hat{r}_2 \of h_2$.
        \end{itemize} 
    \end{enumerate}
\end{introthm}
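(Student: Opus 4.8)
\emph{Strategy.} I would prove the cycle of implications $(1)\Rightarrow(2)\Rightarrow(3)\Rightarrow(1)$. The organizing principle is that a tubular neighborhood of a periodic Reeb orbit is naturally a mapping torus of the Poincar\'e return map with roof the return time, carrying the tautological suspension contact form, and the input that makes the three conditions interchangeable is Lemma~\ref{lem:dr-formula}, which trades information about the return map for information about the return time and back. The implication $(1)\Rightarrow(2)$ is immediate: $H^*\alpha_2=\alpha_1$ forces $H$ to intertwine the Reeb fields, hence the Reeb flows, so (after shrinking $C_1,C_2$, using hyperbolicity, until each is a locally maximal neighborhood of its core) $H(\mc O_1)=\mc O_2$ and the prime periods agree; taking any section $\Sigma_1\ni p_1$ and setting $\Sigma_2:=H(\Sigma_1)$, $H_\Sigma:=H|_{\Sigma_1}$, the map $H_\Sigma$ conjugates the return maps and satisfies $H_\Sigma^*(\alpha_2|_{\Sigma_2})=\alpha_1|_{\Sigma_1}$, so in particular $H_\Sigma^*d\alpha_2|_{\Sigma_2}=d\alpha_1|_{\Sigma_1}$.

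\emph{$(2)\Rightarrow(3)$.} Since conjugate return maps have conjugate linearizations and the prime periods coincide, $\mc O_1$ and $\mc O_2$ have the same Lyapunov exponents. For each $i$, combine Lemma~\ref{lemma: nice section} with the Birkhoff--Sternberg normalization of the return map (as carried out in Section~\ref{sec:normal form for contact}) to produce a parametrized nice section $h_i\colon D^2_\ve\to\Sigma_i$ with $h_i^*\alpha_i|_{\Sigma_i}=\tfrac12(x\,dy-y\,dx)=:\beta_0$ and induced return map $\tilde F_i:=h_i^{-1}\of F_i\of h_i$ in Birkhoff--Sternberg normal form. Flow holonomy between two sections of the same orbit preserves $d\alpha_i$ (since the Reeb flow does), so the conjugacy from (2) transports to a $C^\infty$ conjugacy between $\tilde F_1$ and $\tilde F_2$ that is symplectic for $d\beta_0$; after composing $h_2$ with this conjugacy --- and restoring $h_2^*\alpha_2|_{\Sigma_2}=\beta_0$ by a small section move, which alters $\alpha|_{\Sigma}$ by an exact form while leaving the return map in the induced parametrization unchanged --- we may assume $\tilde F_1=\tilde F_2=:N$. (Equivalently, one may invoke uniqueness of the symplectic Birkhoff--Sternberg normal form.) Finally, Lemma~\ref{lem:dr-formula} shows that on a nice section the return time equals $\hat r_i(p_i)$, the period, minus an explicit function of $N$ and $\beta_0$; the periods agreeing, $\hat r_1\of h_1=\hat r_2\of h_2$, which is the last item of (3).

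\emph{$(3)\Rightarrow(1)$.} This is where the rigidity is sharpest and where Lemma~\ref{lem:dr-formula} does the real work. Writing $\tilde F_i(x,y)=(\Lambda_i(xy)\,x,\ \Lambda_i(xy)^{-1}\,y)$ for the Birkhoff--Sternberg normal form, Lemma~\ref{lem:dr-formula} (see also the formulas of Section~\ref{sec: data from normal forms}) gives $d(\hat r_i\of h_i)=\tilde F_i^*\beta_0-\beta_0=-d\,H_{\Lambda_i}(xy)$ with $H_\Lambda(s)=s\log\Lambda(s)-\int_0^s\log\Lambda(t)\,dt$, so $\hat r_i\of h_i=\hat r_i(p_i)-H_{\Lambda_i}(xy)$ is a function of $xy$ whose derivative determines $(\log\Lambda_i)'$. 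From $\hat r_1\of h_1=\hat r_2\of h_2$ we read off first that the periods $\hat r_i(p_i)$ coincide and then that $(\log\Lambda_1)'=(\log\Lambda_2)'$; since the equal Lyapunov exponents together with the equal periods pin down $\Lambda_1(0)=\Lambda_2(0)$, we conclude $\Lambda_1=\Lambda_2$, hence $\tilde F_1=\tilde F_2=:N$ and $\hat r_1\of h_1=\hat r_2\of h_2=:\rho$. Now both tubular neighborhoods are modeled on the mapping torus of $(D^2_\ve,N)$ with roof $\rho$ --- concretely $\{(x,t): 0\le t\le\rho(x)\}$ with $(x,\rho(x))$ glued to $(N(x),0)$ --- via $G_i(x,t):=\varphi_i^t(h_i(x))$, and on this model the pullback of $\alpha_i$ is the suspension form $dt+\pi^*\beta_0$ (well defined on the quotient precisely by the identity $N^*\beta_0-\beta_0=d\rho$ of Lemma~\ref{lem:dr-formula}), exactly as in the construction of Section~\ref{sec:contact-build}. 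Composing, $H:=G_2\of G_1^{-1}$ is a $C^\infty$ diffeomorphism of tubular neighborhoods with $H^*\alpha_2=\alpha_1$.

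\emph{Main obstacle.} The technical heart is $(2)\Rightarrow(3)$: realizing \emph{all three} normalizations of (3) simultaneously while keeping the return times equal, which forces one to play the section-modification freedom of Lemma~\ref{lemma: nice section} against the Birkhoff--Sternberg normalization and against the transported conjugacy. In $(3)\Rightarrow(1)$ one must also be careful that the mapping-torus reconstruction yields a genuine $C^\infty$ diffeomorphism of full tubular neighborhoods intertwining the contact forms exactly, not merely a formal or germ-level statement along $\mc O_i$; this is what the flow-box coordinates of Section~\ref{sec:contact-build} deliver. Underlying everything is Lemma~\ref{lem:dr-formula}: without the explicit link between roof and return map one could neither pass from the Poincar\'e-map statement (2) to the return-time statement (3), nor recover $\tilde F_i$ --- and hence the contact form --- from $\hat r_i$.
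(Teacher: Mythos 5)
Your proposal is correct, but it organizes the equivalence differently from the paper. The paper uses a hub-and-spoke scheme with (1) as the hub: Lemma~\ref{pullback-to-data} gives $(1)\Rightarrow(2)$ and $(1)\Rightarrow(3)$ simultaneously, Lemma~\ref{return-to-pullback} gives $(2)\Rightarrow(1)$ by transporting the conjugacy $H_\Sigma$ to the normal-form section of $\alpha_1$, correcting the section of $\alpha_2$ via Lemma~\ref{lem:canonnical-section}, and then rebuilding the full coordinates of Section~\ref{sec:contact-build} so that both contact forms acquire the \emph{identical} normal form $\theta_1(xy)\,dt+\frac12(x\,dy-y\,dx)$; and Lemma~\ref{time-to-pullback} gives $(3)\Rightarrow(1)$ by the same $dr=-xy\,d\eta$ argument you use. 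You instead prove the cycle $(1)\Rightarrow(2)\Rightarrow(3)\Rightarrow(1)$: your direct $(2)\Rightarrow(3)$ does not appear in the paper, but it repackages exactly the manipulations of the paper's Lemma~\ref{return-to-pullback} (transport the symplectic conjugacy by flow holonomy, recompose $h_2$, restore $h_i^*\alpha_i=\frac12(x\,dy-y\,dx)$ by a section move that leaves the induced return map unchanged, then read off equality of return times from Lemma~\ref{lem:dr-formula} plus equality of prime periods), stopping at the section level rather than going all the way to (1); and your $(3)\Rightarrow(1)$ is the paper's Lemma~\ref{time-to-pullback} with the final reconstruction phrased as a variable-roof mapping torus carrying the suspension form $dt+\pi^*\beta_0$, rather than the renormalized coordinates $H(t,x,y)=\varphi^{t\bar r(xy)}(h(\psi^{-t}(x,y)))$ of Section~\ref{sec:contact-build}; these are equivalent, and the gluing consistency you cite ($N^*\beta_0-\beta_0=d\rho$) is precisely the identity $dr=F^*\beta-\beta$ from the proof of Lemma~\ref{lem:dr-formula}. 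What your scheme buys is a slightly more symmetric presentation and an explicit $(2)\Rightarrow(3)$; what the paper's buys is that each of (2) and (3) is sent straight to the normal form, so no implication has to carry two normalizations at once.

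One small caution: your parenthetical alternative in $(2)\Rightarrow(3)$, ``invoke uniqueness of the symplectic Birkhoff--Sternberg normal form,'' is not quite available. Conjugate resonance-form maps share Birkhoff invariants, i.e.\ the functions $\omega_i$ agree to infinite order at $0$ (Lemma~\ref{lem:cocycle-coefficeints} is the cocycle analogue), but they need not coincide as smooth functions, so one cannot conclude $\tilde F_1=\tilde F_2$ from normal-form uniqueness alone. Your primary route — composing $h_2$ with the transported area-preserving conjugacy and then fixing the pullback of $\alpha_2$ by Lemma~\ref{lem:canonnical-section} — avoids this and is the correct mechanism, so this does not affect the validity of the argument.
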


We improve the above theorem for a linearizable contact form (Theorem \ref{thm:linear-rigidity}) by showing that extra conditions in Theorem~\ref{thm E} will be satisfied if the Poincar\'e map is locally linear or the return time is locally cohomologous to a constant. The distinguishable feature of linear maps is that a priori their centralizer is larger than the centralizers of nonlinear maps (see Lemma \ref{lem:std-area-form}, Remark \ref{rem:nonlin-centralizer}, and Example \ref{ex:nonlinear-centralizer}). In fact, we do not expect to be able to remove the extra conditions in Theorem~\ref{thm E}. 

\subsection*{Acknowledgments}
The authors would like to thank Martin Leguil who participated in early development of this project, and Boris Hasselblatt for enlightening conversations and interest in the project. The first author was supported by NSF grant DMS-2247230 and DMS-2552860. The second author was supported by NSF grant DMS-2405440. The third author was supported by NSF grant DMS-2000167.

\section{Resonance forms and their applications}\label{sec: applications of normal forms}

We aim to provide an analogy of Birkhoff normal form for periodic points of diffeomorphisms to periodic orbits of flows. Before doing so, we briefly recall the theory of {\it resonances} for linear maps. Throughout, we let $\bar k$ denote a multindex $\bar k=(k_1,\dots,k_n)$

\begin{definition}
\label{def:res}
    Let $A \in GL(n,\R)$ be a matrix with eigenvalues $\lambda_1,\dots,\lambda_n$ (listed with multiplicity if $A$ does not have simple spectrum). A {\it resonance} for $A$ is any equality of the form
    \[\lambda_i = \lambda^{\bar k} :=\lambda_1^{k_1}\dots \lambda_n^{k_n} \qquad\text{with}\quad i\in\{1,2,\ldots,n\}\quad\text{and}\quad \bar k=(k_1,\ldots,k_n)\in\mathbb N^n_0,\]
which we call a resonance of the form $(i;k_1,\dots,k_n)$. A formal power series $F=(F_1,\dots,F_n)$ with $F_i = \sum\limits_{\bar k\in\mathbb N^n_0} c_{\bar k}^{(i)}x_1^{k_1}\dots x_n^{k_n}$ is said to be in {\it resonance form} if for all $i$ we have $c_{\bar k}^{(i)} = 0$ whenever $(i;k_1,\dots,k_n)$ is not a resonance. That is, the power series for each $F_i$ consists only of resonance terms.
\end{definition}

Notice that when $n = 2$ and $A \in SL(2,\R)$, the resonances are all of the form $(1;k+1,k)$ and $(2;k,k+1)$. In particular, the only resonance terms for the $i$th coordinate are of the form $x_i(x_1x_2)^k$. Thus, if in resonance form, a formal power series in the variables $x$ and $y$ is of the form
\begin{equation}\label{2d resonance}
F(x,y) = (xf_1(xy),yf_2(xy)),
\end{equation}
where $f_1(z)$ and $f_2(z)$ are formal power series.

The moral of the story of normal forms is that dynamical objects can always be put in resonance form. The resonance form can be often improved to reflect additional structures of the setting (e.g., in the volume-preserving setting, we can always have $f_1(z)=\sum\limits_{k=0}^\infty\omega_kz^k$ with $\omega_0\neq0$ and $f_2(z)$ being the formal inverse of $f_1(z)$ in \eqref{2d resonance}). We make the following definitions which will be the targets for our normal forms in a neighborhood of a hyperbolic periodic point of a volume-preserving flow on manifold of dimension $3$ (see Section~\ref{sec: definitions} for relevant definitions).

\begin{definition}
\label{def:resonance}
    Let $D^2_\ve$ be an open disc of radius $\ve$ in $\R^2$ centered at $\vec{0}\in\mathbb R^2$ and $C = \R / \Z \times D^2_\ve$. We say that

    \begin{enumerate}
        \item a local area-preserving $C^\infty$ diffeomorphism $F\colon D^2_\ve \to \R^2$ is in resonance form if there exist $\lambda > 1$ and a non-zero $\omega \in C^\infty(-\ve,\ve)$ such that $\omega(0) = 1$ and $F(x,y) = (\lambda x\omega(xy),\lambda^{-1}y\omega(xy)^{-1})$,
        \item a $C^\infty$ function $r\colon D^2_\ve \to \R$ is in resonance form if there exists $\bar{r}\colon (-\ve,\ve) \to \R$ such that $r(x,y) = \bar{r}(xy)$,
        \item\label{def: resonance vector field} a conservative vector field $X$ on $C$ is in resonance form if there exist $f,g \in C^\infty(-\ve,\ve)$ such that $f(0),g(0) > 0$ and $X = f(xy)\left(\frac{\del}{\del t} + x g(xy) \frac{\del}{\del x} - y g(xy) \frac{\del}{\del y}\right)$,
        \item\label{def: resonance contact form} a contact form $\alpha$ on $C$ is in resonance form if there exists $\theta \in C^\infty(-\ve,\ve)$ such that $\theta(0) > 0$, $\theta'(0) > 0$ and $\alpha = \theta(xy) \, dt + \frac{1}{2}(x \, dy - y \, dx)$. 
    \end{enumerate}
\end{definition}

Note that each of these resonance form definitions is consistent with, and a strengthening of, Definition \ref{def:res}. Indeed, in (1) and (3), the $x$ and $y$ coefficients must match the corresponding prescribed resonance terms, but have the added restriction of being inversely proportional or opposites, respectively. In (2), this makes sense since if we think of the function as taking values in a new eigenspace of eigenvalue 1, it produces resonances of the form $\lambda^k(\lambda^{-1})^k = 1$. Finally, in the case of (4), the contact form's $dt$ term can be thought of as an eigendirection of eigenvalue 1, leading to the $xy$ term.

\subsection{Preliminaries}\label{sec: Preliminaries}

In this section we recall definitions and results related to our construction of normal forms and their applications.

\subsubsection{Flows, sections and Poincar\'{e} maps}\label{sec: sections}

Historically, normal form analysis for flows has been carried out by considering sections. Since it will be also our starting point, we recall the definitions.

Consider a smooth closed $3$-manifold $M$.
Fix a $C^\infty$-flow $\Phi=(\varphi^t)_{t\in\mathbb{R}}\colon M\rightarrow M$ with generating vector field $X$. A {\it local Poincar\'{e} section} of a flow is an embedded closed disk $\Sigma$ of codimension one such that $X(x) \not\in T_x\Sigma$ for all $x \in \Sigma$. Assume further that $\Sigma$ contains a periodic point $p$ such that if $\mc O(p)$ is the orbit of $p$, $\mc O(p) \cap \Sigma = \set{p}$. Then on a small neighborhood $U \subset \Sigma$ that contains $p$, one obtains two smooth pieces of data: a \textit{return time} $r \colon U \to \R_+$ defined so that $r(x)$ is the smallest value of $t > 0$ such that $\varphi^t(x) \in \Sigma$, and the \textit{Poincar\'e map} $F\colon U \to \Sigma$ is defined by $F(x) = \varphi^{r(x)}(x)$. 

After choosing a section, one usually applies the theory of normal forms for fixed points of diffeomorphisms, and studies the system as a suspension of the local diffeomorphism $F$ with roof function $r$. We extend this approach, by building structures on tubular neighborhoods of periodic orbits directly. In particular, we want to build coordinates on such tubular neighborhoods which put the vector fields and other objects associated to local flows in the normal forms described in Definition \ref{def:resonance}. We work in dimension $3$, where tubular neighborhoods of periodic orbits are all diffeomorphic to $C = \R / \Z \times D^2_\ve$, where $D^2_\ve$ is an open disc of radius $\ve$ centered at $\vec{0} \in \R^2$, and use the coordinate $t$ for the $\R /\Z$ component, and $x$ and $y$ for the $D^2_\ve$ components. 

\subsubsection{Tangency of embedded submanifolds}\label{section: tangency definition}

In this paper, we understand the order of tangency in  terms of Taylor polynomial (see \cite{DMP} for more details and other ways to define a tangency).

\begin{definition}\label{def: tangent function}
Let $\Sigma$ be a codimension one $C^\infty$ embedded submanifold of $M$ and $p\in M$. Let $U\subset \Sigma$ be a small neighborhood of $p$ and $f\colon U\rightarrow \mathbb R$ be a function. We say that $f$ is tangent to a constant up to the $k$-th order at $p$ if there exists a neighborhood $\tilde U$ of $\vec{0}$ in $\mathbb R^2$ and a $C^\infty$ coordinate system $h\colon (\tilde U,\vec{0})\rightarrow (U, p)$ such that
\begin{equation}\label{eq:tangent up to order 2}
f\circ h(u)-f(p)=o(|u|^k) \qquad \text{as}\qquad U\ni u\rightarrow \vec{0}.
\end{equation}
\end{definition}

We note that if a function $f$ is tangent to a constant up to the k-th order at $p$, then for any coordinate system $h'\colon (\tilde U', \vec{0})\rightarrow (U,p)$ we have expression similar to \eqref{eq:tangent up to order 2}.

\begin{definition}\label{def: tangent sections}
    Let $\Sigma_1$ and $\Sigma_2$ be two codimension one embedded submanifolds of $M$ and $p \in \Sigma_1 \cap \Sigma_2$ with smooth local charts $h_i\colon D^2_\ve \to \Sigma_i$, $i =1,2$. We say, that $\Sigma_1$ and $\Sigma_2$ are tangent up to the $k$-th order at $p$ if there exists a smooth diffeomorphism $\rho\colon D^2_\ve \to D^2_\ve$ such that $$d(h_1(u),h_2\of \rho(u)) = o(|u|^k) \qquad \text{as}\qquad U\ni u\rightarrow \vec{0},$$
    where $d$ is any metric on $M$ induced by a Riemannian metric.
\end{definition}

\begin{remark}
    This definition is independent of the Riemannian metric as any two Riemannian metrics on a compact manifold are Lipschitz equivalent. Moreover, the definition is independent of the choice of the local charts. It follows from the observation that if $\tilde h_i\colon D^2_\varepsilon\rightarrow \Sigma_i$, $i=1,2$ are another local charts, then $\tilde h_1=h_1\circ (h_1^{-1}\circ \tilde h_1)$ and $\tilde h_2\circ(\tilde h_2^{-1}\circ h_2\circ\rho\circ h_1^{-1}\circ \tilde h_1)=h_2\circ (\rho\circ h_1^{-1}\circ \tilde h_1)=h_2\circ \rho\circ (h_1^{-1}\circ \tilde h_1)$ so the diffeomorphism $\tilde\rho = \tilde h_2^{-1}\circ h_2\circ\rho\circ h_1^{-1}\circ \tilde h_1$ will work for $\tilde h_1$ and $\tilde h_2$. 
\end{remark}

\begin{lemma}\label{tangency of sections}
    Let $\Sigma_1$ and $\Sigma_2$ be two codimension one embedded submanifolds of a $3-$manifold $M$ and $p \in \Sigma_1 \cap \Sigma_2$ be such that $T_p\Sigma_1 = T_p\Sigma_2$. Fix a $C^\infty$ vector field $Z$ in a neighborhood of $p$ such that $Z(p) \not\in T_q\Sigma_i(p)$, $i=1,2$, and let $(\varphi^t)$ be a flow generated by $Z$. Let $\tau\colon \Sigma_1 \to \R$ be the function defined on a neighborhood of $p \in \Sigma_1$ such that $\varphi^{\tau(q)}(q) \in \Sigma_2$. Then $\Sigma_1$ is tangent up to $k$-th order to $\Sigma_2$ if and only if $\tau$ is tangent to a $0$ up to the $k$-th order at $p$.
\end{lemma}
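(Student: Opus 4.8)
The plan is to reduce the whole statement to one coordinate system for $Z$ near $p$ in which $\Sigma_1$ is flat and the flow is a translation; in such coordinates the return time $\tau$ and the ``vertical gap'' between $\Sigma_1$ and $\Sigma_2$ become literally the same scalar function, and the equivalence then drops out of Definitions~\ref{def: tangent function} and~\ref{def: tangent sections} together with the chart-independence remarks that follow them.

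\textbf{Step 1: an adapted flow box.} I would fix a smooth parametrization $\sigma\colon D^2_\ve \to \Sigma_1$ with $\sigma(\vec 0) = p$ and set $G(u,s) := \varphi^s(\sigma(u))$. Since $Z(p) \notin T_p\Sigma_1$, the differential $dG_{\vec 0}$ is invertible (its image is spanned by $T_p\Sigma_1$ and $Z(p)$), so $G$ is a diffeomorphism from a neighborhood of $\vec 0 \in \R^2\times\R$ onto a neighborhood $W$ of $p$; it satisfies $G(u,0) = \sigma(u)$, $G_*\partial_s = Z$, and $\varphi^t(G(u,s)) = G(u,s+t)$. Thus $\Sigma_1 \cap W = \{s=0\}$ in these coordinates.

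\textbf{Step 2: $\Sigma_2$ as a graph, and the meaning of $\tau$.} Since $Z(p)\notin T_p\Sigma_2$, in the $G$-coordinates $\Sigma_2$ is near $p$ the graph $\{s = \psi(u)\}$ of a smooth function with $\psi(\vec 0) = 0$, and the hypothesis $T_p\Sigma_1 = T_p\Sigma_2$ gives in addition $d\psi(\vec 0) = 0$. For $q = \sigma(u) \in \Sigma_1$ one has $\varphi^t(q) = G(u,t)$, which lies in $\Sigma_2$ for small $t$ exactly when $t = \psi(u)$, so $\tau(\sigma(u)) = \psi(u)$. Hence, by Definition~\ref{def: tangent function} and the chart-independence noted after it, $\tau$ is tangent to $0$ up to the $k$-th order at $p$ if and only if $\psi(u) = o(|u|^k)$ as $u\to\vec 0$.

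\textbf{Step 3: tangency of the sections is the same condition.} I would test Definition~\ref{def: tangent sections} using the charts $\tilde h_1 := \sigma$ on $\Sigma_1$ and $\tilde h_2(u) := G(u,\psi(u))$ on $\Sigma_2$ — legitimate by the remark after that definition, which says tangency is chart-independent. Since $G$ is a diffeomorphism onto $W$, on a small neighborhood of $p$ the distance $d$ is bi-Lipschitz equivalent (with some constant $L$) to the Euclidean distance in the $(u,s)$-coordinates, in which $\tilde h_1(u) = (u,0)$ and $\tilde h_2(\rho(u)) = (\rho(u),\psi(\rho(u)))$. For the ``if'' direction I take $\rho = \id$ and get $d(\tilde h_1(u),\tilde h_2(u)) \asymp |\psi(u)| = o(|u|^k)$. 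For the ``only if'' direction, an admissible $\rho$ must fix $\vec 0$ (because $\tilde h_2(\rho(u)) \to p$), and from the coordinate expression $|u-\rho(u)| \le L\, d(\tilde h_1(u),\tilde h_2(\rho(u))) = o(|u|^k)$ and $|\psi(\rho(u))| \le L\, d(\tilde h_1(u),\tilde h_2(\rho(u))) = o(|u|^k)$; when $k\ge 1$ the first estimate forces $|\rho(u)|/|u|\to 1$, and substituting $v=\rho(u)$ in the second yields $\psi(v) = o(|v|^k)$ (for $k=0$ both notions hold trivially). Combined with Step 2, this gives the equivalence.

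\textbf{Expected main obstacle.} The one place that needs care is the ``only if'' half of Step 3: a priori one is allowed to precompose $\tilde h_2$ with an arbitrary diffeomorphism $\rho$, and one must rule out that this freedom spuriously creates tangency. The resolution is that in the straightened coordinates $\rho$ merely reshuffles the base disc and cannot alter the graphing function $\psi$, while the tangency hypothesis itself pins $\rho$ down to first order ($d\rho(\vec 0) = \id$), so comparing $\psi\circ\rho$ with $\psi$ is harmless. Everything else — the construction of a flow box adapted to a transversal, and the local comparison of a Riemannian distance with coordinate Euclidean distance on a compact neighborhood — is routine.
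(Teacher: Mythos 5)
Your proof is correct and follows essentially the same route as the paper: straighten the flow of $Z$ in a flow-box adapted to $\Sigma_1$, so that $\tau$ becomes the graphing function of $\Sigma_2$ over $\Sigma_1$, and then read the equivalence off the definitions. Your treatment of the ``only if'' direction is in fact somewhat more careful than the paper's, since you explicitly rule out that the reparametrization $\rho$ allowed in Definition~\ref{def: tangent sections} could create spurious tangency, a point the paper passes over by working directly with charts whose derivatives agree.
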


\begin{remark}
Since $T_p\Sigma_1 = T_p\Sigma_2$, there exists a $C^\infty$ vector field $Z$ in a neighborhood of $p$ such that $Z(q) \not\in T_q\Sigma_i(q)$ for all $q \in \Sigma_i$, $i=1,2$.
\end{remark}

\begin{proof}[The proof of Lemma \ref{tangency of sections}]

    We first construct two local diffeomorphisms, $H_i\colon (-\ve,\ve) \times \Sigma_i \to M$ by $H_i(t,q) = \varphi^t(q)$, $i=1,2$, for some $\ve>0$. Note that $dH_i(0,p)$ is invertible as $Z(p) \not\in T_p\Sigma_i$. Then $-\tau$ is exactly the $t$-coordinate of $q\ \mapsto H_2^{-1} \of H_1(0,q)$. Furthermore, if $\tilde{U}$ is a neighborhood of $\vec{0}$ in $\R^2$ and $h_1\colon \tilde{U} \to \Sigma_1$ is a coordinate chart, then the $\Sigma_2$ coordinate of $H_2^{-1} \of H_1(0,h_1(x))$ is a coordinate chart of $\Sigma_2$. Denote this function by $\hat{h}_2$. Note that $\tau$ and $\hat{h}_2$ satisfy $\varphi^{\tau(h_1(x))}(h_1(x)) =\hat{h}_2(x)$ by definition.
 If $\tau(p)=0$ and all derivatives of $\tau$ vanish up to $k$-th order, it is clear that all derivatives of $h_1$ and $\hat{h}_2$ agree up to $k$-th order.
  
    Now, assume that there exist local charts $h_1$ and $h_2$ for $\Sigma_1$ and $\Sigma_2$ whose derivatives up to order $k$ agree. Then notice that $H_1 \of (\id\times h_1)$ and $H_2 \of (\id \times h_2)$ are $C^\infty$ tangent at $\vec 0$ as functions from $\R^3$. Thus, $\tau$, the $t$-coordinate of $H_2^{-1} \of H_1$ must be $C^\infty$ tangent to 0.
\end{proof}

\subsubsection{The Anosov property and dynamical invariants}\label{sec: definitions}
Throughout this section, let $\Phi = (\varphi_t)_{t\in\mathbb R}$ be a fixed-point free flow on a compact 3-manifold $M$. First, we recall that a hyperbolic $\Phi$-invariant set $\Lambda\subset M$ is called \textit{hyperbolic} if there exist $\lambda>0$, $C>0$ and a continuous flow-invariant splitting $T_\Lambda M=E^{uu}\oplus \mathbb RX\oplus E^{ss}$ such that for all $p\in\Lambda$ and all $t>0$
\begin{align*}
\|d\varphi^t(p)w\|&\leq Ce^{-\lambda t}\|w\|\qquad \text{for all } w\in E^{ss}(p)\\
\|d\varphi^{-t}(p)w\|&\leq Ce^{-\lambda t}\|w\|\qquad \text{for all } w\in E^{uu}(p),
\end{align*}
where $\|\cdot\|$ is the norm on $T_pM$ induced by a Riemannian metric on $M$. Subbundles $E^{uu}$ and $E^{ss}$ are called unstable and stable subbundles on $\Lambda$, respectively. Moreover, $E^{cs}=\mathbb R\oplus E^{ss}$ and $E^{cu}=\mathbb R\oplus E^{uu}$ are weak stable and unstable subbundles, respectively. We also say that $\Phi$ is an Anosov flow if $M$ itself is a hyperbolic set.

We say $\Phi$ is a Reeb flow if it is generated by the contact vector field $X_{\alpha}$ implicitly defined by the 
two conditions
$$d\alpha(X_{\alpha}, \cdot)=0, \text{ and } \alpha(X_{\alpha})=1,$$ where $\alpha$ is a contact form, i.e., a one form on $M$ with $\alpha\wedge d\alpha\neq 0$. We recall that the volume form $\alpha\wedge d\alpha$ is invariant under the flow. Moreover, if the Reeb flow for $\alpha$ is Anosov, then the kernel of $\alpha$ equals $E^{uu}\oplus E^{ss}$ and $\Phi$ is ergodic, i.e., any $\Phi$-invariant set is of full or zero measure. A classical example of an Anosov Reeb flow is the geodesic flow of manifolds with negative sectional curvatures.

If $p$ is a hyperbolic periodic orbit, consider a local Poincar\'e section $\Sigma$ endowed with local coordinates $(x,y)$, the hyperbolic periodic point $p$ being identified with the origin $\vec{0}$, and the corresponding Poincar\'e map $F$. Then $F$ is a local diffeomorphism defined in a neighborhood of $\vec{0}$, and is area-preserving. Moreover, it has a hyperbolic fixed point at $\vec{0}$. 

For sufficiently small $\Sigma$, a classical result of Sternberg \cite{Ste} provides resonance coordinates for area-preserving surface diffeormorphisms (recall Definition \ref{def:resonance}(1)):

\begin{theorem}[Birkhoff-Sternberg Coordinates]
\label{thm:sternberg}
    There exists a $C^\infty$ area-preserving change of coordinates
$
h_\Sigma \colon
\Sigma  \to \R^2$ for which $h_\Sigma\circ F\circ h_{\Sigma}^{-1}\colon D^2_\ve\rightarrow \mathbb{R}^2$ is in resonance form.
\end{theorem}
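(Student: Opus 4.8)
The plan is to follow the classical two-step route to smooth normal forms at a hyperbolic fixed point: first produce the normal form as a formal power series, then realize it by a genuine $C^\infty$ change of coordinates, carrying the area-preserving structure through every step. (In dimension two ``area-preserving'' and ``symplectic'' coincide, so the statement is an instance of the symplectic Birkhoff--Sternberg theory.) First I would normalize the linear part: $dF(\vec 0)$ lies in $SL(2,\R)$ and is hyperbolic, hence has real eigenvalues $\lambda,\lambda^{-1}$ with $\lambda>1$ (we assume the eigenvalues are positive, as holds for the periodic orbits relevant to this paper), and since two diagonalizable elements of $SL(2,\R)$ that are conjugate in $GL(2,\R)$ are already conjugate in $SL(2,\R)$, a linear area-preserving change of coordinates brings $F$ to $\Lambda+(\text{higher order})$ with $\Lambda=\operatorname{diag}(\lambda,\lambda^{-1})$. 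As recorded after Definition~\ref{def:res}, the only resonances are $(1;k+1,k)$ and $(2;k,k+1)$, so the resonant monomials in the $i$th coordinate are exactly $x_i(xy)^k$ and \eqref{2d resonance} is the general formal resonance form.

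Next I would produce the formal normal form \emph{symplectically}. Working with formal power series, remove the non-resonant terms of $F$ degree by degree, at each stage conjugating by the time-one map of the Hamiltonian vector field of a homogeneous generating function; this keeps every intermediate map area-preserving and modifies the lowest not-yet-normalized homogeneous part of $F$ by the homological operator $\{H_0,\cdot\}$ with $H_0=(\log\lambda)\,xy$. In the monomial basis this operator is diagonal, it vanishes precisely on the $xy$-dependent (resonant) monomials, and off its kernel the eigenvalues are bounded away from zero because $\lambda$ is hyperbolic, so no small divisors appear and every non-resonant term is killed. This brings $F$, as a formal power series, to $(\lambda x f_1(xy),\ \lambda^{-1}y f_2(xy))$ with $f_1(0)=f_2(0)=1$; a one-line Jacobian computation gives $\det dF=\bigl(zf_1(z)f_2(z)\bigr)'\big|_{z=xy}$, so the area-preserving condition forces $zf_1f_2\equiv z$, i.e.\ $f_2=f_1^{-1}$, which is exactly Definition~\ref{def:resonance}(1) at the formal level.

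Then I would pass from formal to smooth. By Borel's lemma, realize the formal symplectic conjugacy by a genuine $C^\infty$ area-preserving diffeomorphism (for instance as a finite composition of time-one flows of Borel-realized Hamiltonians) and realize $f_1$ by $\omega\in C^\infty(-\ve,\ve)$ with $\omega(0)=1$; after shrinking the disc, $\omega$ is nonzero on $(-\ve,\ve)$. After this change $F$ is $C^\infty$-conjugate to an area-preserving map $G$ whose full jet at $\vec 0$ equals $N(x,y):=(\lambda x\omega(xy),\lambda^{-1}y\omega(xy)^{-1})$, and $N$ is area-preserving by the previous step. Finally, two area-preserving maps that agree to infinite order at a hyperbolic fixed point are conjugate by an area-preserving $C^\infty$ diffeomorphism tangent to the identity — this is Sternberg's theorem \cite{Ste} in its geometric (area-preserving) form, whose proof runs an iteration that converges thanks to the uniform contraction on $W^s$ and expansion on $W^u$. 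Composing the linear map, the Borel diffeomorphism, and this conjugacy, and restricting to a small disc $D^2_\ve$, yields the desired $h_\Sigma$.

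The main obstacle is this last step — upgrading an infinite-order formal normal form to an honest $C^\infty$ conjugacy; the linear normalization is elementary and the formal Birkhoff step involves no small divisors (exactly why the hyperbolic case is ``easy'' compared with the elliptic one), so essentially all the analytic content sits in the Sternberg-type flattening argument. The other point requiring care is keeping the area-preserving structure through all three steps, which is why I would run Birkhoff via Hamiltonian (generating-function) changes and appeal to the symplectic version of Sternberg's theorem rather than restoring the area form with a Moser correction at the end, which could destroy the resonance form already obtained.
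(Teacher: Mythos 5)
The paper does not actually prove Theorem~\ref{thm:sternberg}: it is quoted as a classical result and attributed to Sternberg \cite{Ste}, so there is no internal argument to compare yours against. Your sketch is the standard proof of that classical result and is essentially sound: linear normalization inside $SL(2,\R)$, degree-by-degree formal normalization where hyperbolicity of $\lambda$ rules out small divisors and the resonant monomials are exactly $x_i(xy)^k$ (matching the discussion after Definition~\ref{def:res}), the Jacobian identity $\det dF=(zf_1f_2)'|_{z=xy}$ forcing $f_2=f_1^{-1}$, and finally the jet-rigidity statement that two area-preserving maps $\infty$-tangent at a hyperbolic fixed point are conjugate by an area-preserving diffeomorphism. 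You are right that all the analytic content sits in that last step, and your decision to carry the area form through every stage (Hamiltonian generating functions for the formal step, the geometric form of Sternberg at the end) rather than restoring it afterwards by a Moser correction is exactly the right instinct --- the paper's own Example~\ref{ex:nonlinear-centralizer} shows that for a nonlinear normal form one cannot in general fix the area form by a diffeomorphism commuting with the map, so a careless correction at the end could indeed destroy the resonance form. Since the paper itself only cites \cite{Ste} for the area-preserving statement, your reliance on the geometric Sternberg theorem as a black box is at parity with the paper (careful proofs of such geometric versions exist in the literature, e.g.\ via Banyaga--de la Llave--Wayne type arguments).

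Two small points of care. First, the formal conjugacy is an infinite composition of degree-$k$ Hamiltonian time-one maps, not a finite one; to realize it smoothly and area-preservingly one should either Borel-realize a single formal Hamiltonian whose time-one map has the right $\infty$-jet, or Borel-realize the full formal transformation and note that its jet can be taken area-preserving --- the phrase ``finite composition of time-one flows'' does not literally capture the whole formal series. Second, your standing assumption $\lambda>1$ (positive eigenvalues) is needed for the resonance form of Definition~\ref{def:resonance}(1) but is not automatic for the return map of a hyperbolic periodic orbit of a $3$-dimensional flow (the orbit may be of twisted type with $\lambda<-1$); this restriction is implicit in the paper's formulation as well, so it is worth flagging rather than a gap in your argument specifically.
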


If the resonance form of $F$ is $(x,y) \mapsto (\lambda x\omega(xy),\lambda^{-1}y\omega(xy)^{-1})$, the function $\eta(z) := \log\omega(z)$ will often be more useful to work with. Consider the Taylor expansion of $\omega(z)$ at $0$, 
$$
\omega \colon z \mapsto a_0+a_1 z+  a_2 z^2+\dots,\quad \text{with }a_0 = 1.
$$
The numbers $(a_k)_{k \geq 0}=(a_k(\mathcal{O}(p)))_{k \geq 0}$ are called the \textit{Birkhoff
	invariants} or \textit{coefficients} of $F$ at $p$.

\begin{definition}[Formula (16), \cite{HuKa}] 
\label{def:anosov-cocycle}
 The number $-a_1$ is called the {\it Anosov class} of the local diffeomorphism $F$ at $p$. 
\end{definition}
\begin{remark}\label{anosov}
If $\eta(z)$ as above, the Anosov class around the periodic orbit is $-\eta'(0)$. 
\end{remark}

An analogous cocycle can be constructed by considering the return time. This cocycle was first studied in \cite{FH}, where it was called the {\it longitudinal KAM-cocycle}. To avoid confusion with KAM theory, we will refer to it as the {\it Foulon-Hasselblatt cocycle}. 

Consider {\it adapted coordinates} (constructed by Hurder and Katok) around the hyperbolic periodic point $p$ of $\Phi$ with period $T$. Let $B^3_\ve$ be a $3$-dimensional open ball of radius $\ve$ in $\mathbb R^3$. Crucially, these are collections of coordinate charts $h_x : B^3_\ve \to M$, where $x\in M$, with specific dynamical properties, most importantly, that the axes correspond to the flow, the unstable, and the stable directions (in that order). In such coordinates, one may show that the differential of $\varphi^T$ at the points of the stable leaf of a given point $p$ takes the following form:
\[ D\varphi^T(0,0,y)=\begin{bmatrix}
        1&b_1(y)&0\\
        0&\alpha^{-1}(y)&0\\
        0&b_2(y)&\alpha(y)
    \end{bmatrix},
\]
where $|\alpha(y)|<1$ and $b_i(y)=O(y)$.

\begin{definition}\label{def: Foulon-Hasselblatt class}
    The {\it longitudinal KAM obstruction} or {\it Foulon-Hasselblatt class} at the periodic orbit $p$ is $b_1'(0)$.
\end{definition}

\begin{remark}
    In fact, the Anosov class and Foulon-Hasselblatt class are values of cocycles  at the periodic point at time $T$, which are called the Anosov and Foulon-Hasselblatt cocycles, respectively. The cocycles can be constructed at arbitrary points and times of the flow, and coincide with the Anosov and Foulon-Hasselblatt classes at periodic orbits. The cocycles are well-defined, since different families of adapted coordinates yield cohomologous cocycles. See \cite{HuKa} and \cite{FH} for more details.
\end{remark}

\subsubsection{Rigidity from dynamical invariants}\label{sec: known rigidity}
The following result of Hurder-Katok says that the  Anosov class corresponds to certain  obstructions to the smoothness of the weak stable/weak unstable distributions. In view of Remark \ref{anosov}, the periodic obstructions are $-\eta'(0)$.
\begin{theorem}[Theorem 3.4, Corollary 3.5, Proposition 5.5 in \cite{HuKa}]\label{theorem HK}
	Let $\Phi$ be a $C^\infty$ transitive Anosov flow on a 3-manifold. The following properties are equivalent:
	\begin{itemize}
		\item the Anosov cocycle is a coboundary;
		\item for any periodic orbit $\mathcal{O}$, the Anosov class at  $\mathcal{O}$ vanishes;
		\item the weak stable/weak unstable distributions $E_\Phi^{cs}$/$E_\Phi^{cu}$ are $C^\infty$.
	\end{itemize} 
\end{theorem}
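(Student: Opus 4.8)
The plan is to split the three-way equivalence into a cohomological part, $(1)\Leftrightarrow(2)$, which is an instance of Liv\v{s}ic theory, and a regularity part, $(1)\Leftrightarrow(3)$, which rests on the fact that the Anosov cocycle is precisely the obstruction to the smoothness of $E_\Phi^{cs}$. Throughout I will use the description recalled in Definition~\ref{def:anosov-cocycle} and Remark~\ref{anosov}: the Anosov class at a periodic orbit $\mc O$ of period $T$ through $p$ equals $\int_0^T a_X(\varphi^s p)\,ds$, where $a_X$ is a H\"older function on $M$ generating the additive Anosov cocycle $c(x,t)=\int_0^t a_X(\varphi^s x)\,ds$; equivalently, by Remark~\ref{anosov}, its value at $\mc O$ is the Birkhoff coefficient $-\eta'(0)$ of the resonance form of the return map. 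Here ``the Anosov cocycle is a coboundary'' means that $a_X=Xu$ for some transfer function $u$.

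I would dispose of $(1)\Rightarrow(2)$ first, as it is immediate: if $a_X=Xu$, then for a periodic point $p$ of period $T$ one has $\int_0^T a_X(\varphi^s p)\,ds=u(\varphi^T p)-u(p)=0$, so every Anosov class vanishes. For $(2)\Rightarrow(1)$ I would invoke the Liv\v{s}ic theorem for flows: since $\Phi$ is a $C^\infty$ transitive Anosov flow and $a_X$ is H\"older with vanishing integral over every periodic orbit, there is a H\"older $u$ with $Xu=a_X$ (see, e.g., \cite{kalinin2011livvsic}), and the regularity theory for the Liv\v{s}ic cohomology equation (de la Llave--Marco--Moriy\'on \cite{de1986canonical}, together with the Journ\'e regularity lemma) then bootstraps $u$ to the regularity of $a_X$. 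This closes the loop $(1)\Leftrightarrow(2)$.

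For $(1)\Leftrightarrow(3)$ I would argue as follows. In the adapted coordinates of Hurder--Katok the weak-stable foliation is already $C^1$, and by construction the defect preventing a higher-order smooth normalization along the flow direction is measured exactly by $a_X$, transported according to the equation $Xu=a_X$. Thus, if this cohomological equation has a smooth solution $u$, one can perform a smooth, flow-respecting change of coordinates --- of the type built in Section~\ref{sec: coordinates}, realizing the resonance forms of Definition~\ref{def:resonance} --- that straightens $E_\Phi^{cs}$, so that $E_\Phi^{cs}\in C^\infty$; the distribution $E_\Phi^{cu}$ is then handled by applying the same argument to the time-reversed flow, whose Anosov cocycle is cohomologous to $-a_X$ and hence also a coboundary. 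Conversely, if $E_\Phi^{cs}$ is $C^\infty$, then the jet data defining $a_X$ is smooth and telescopes along orbits, exhibiting $a_X$ as a coboundary; alternatively, using $(1)\Leftrightarrow(2)$ it suffices to observe that a smooth weak-stable distribution forces each Anosov class to vanish, since the Anosov class is a genuine invariant of the $2$-jet of $W^{cs}$ along the orbit.

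I expect $(1)\Rightarrow(3)$ to be the main obstacle. The point is to leverage the vanishing of the \emph{single} first-order obstruction $a_X$ into \emph{full} $C^\infty$ smoothness of $E_\Phi^{cs}$, not merely $C^2$ smoothness. This requires either a bootstrap showing that, once the Anosov cocycle is a coboundary, every higher-order obstruction automatically vanishes (each higher jet of the weak-stable holonomy being determined by the lower jets and by the transfer function $u$), or an independent ``$C^2\Rightarrow C^\infty$'' regularity statement for weak distributions of $3$-dimensional Anosov flows. One must also control the regularity of $a_X$ itself --- a priori it is only as smooth as the invariant bundles --- and verify that the straightening coordinates produced at each step depend smoothly on the base point, so that they patch into a globally defined smooth distribution. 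By comparison, the Liv\v{s}ic steps $(1)\Leftrightarrow(2)$ are routine.
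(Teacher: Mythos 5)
You should first note that the paper does not prove this statement at all: it is quoted verbatim as an external result of Hurder--Katok (Theorem 3.4, Corollary 3.5, Proposition 5.5 of \cite{HuKa}), so there is no internal proof to compare against, and what you are proposing is a reproof of a deep theorem from the literature. Judged on its own merits, your outline of $(1)\Leftrightarrow(2)$ is fine: one direction is the trivial telescoping of a coboundary over a periodic orbit, and the other is the Liv\v{s}ic theorem for H\"older cocycles over a transitive Anosov flow. Even here there is a point you pass over: in this paper the Anosov class is \emph{defined} through the Birkhoff coefficient $-a_1=-\eta'(0)$ of the normal form of the return map (Definition~\ref{def:anosov-cocycle}, Remark~\ref{anosov}), and identifying it with the value over the orbit of a globally defined H\"older cocycle $c(x,t)$ over the flow (well defined up to cohomology, independent of the family of adapted coordinates) is itself part of the Hurder--Katok construction; you assume this identification, and moreover assume the cocycle has a H\"older generator $a_X$ with $c(x,t)=\int_0^t a_X(\varphi^s x)\,ds$, which is not automatic from the definition via adapted charts.

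The genuine gap is exactly where you admit it lies: $(1)\Rightarrow(3)$, and also the converse $(3)\Rightarrow(1)/(2)$. Your argument there consists of the assertion that ``the defect preventing a higher-order smooth normalization along the flow direction is measured exactly by $a_X$'' and that a smooth solution of $Xu=a_X$ yields a flow-respecting change of coordinates straightening $E^{cs}_\Phi$; this is a restatement of the theorem, not a proof. The actual content of \cite{HuKa} is (i) the a priori transverse regularity of the weak distributions of a three-dimensional Anosov flow (they are $C^{1+\mathrm{Zygmund}}$), (ii) the identification of the Anosov cocycle as the precise obstruction to improving this to the next regularity class, and (iii) a nontrivial bootstrap showing that once this single quadratic obstruction is a coboundary the distributions are in fact $C^\infty$ --- none of which can be replaced by the alternative you offer (``either a bootstrap showing every higher-order obstruction automatically vanishes, or an independent $C^2\Rightarrow C^\infty$ statement''), since that alternative is precisely the unproved hard step. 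Likewise, for $(3)\Rightarrow(2)$ you assert that smoothness of $E^{cs}_\Phi$ forces each periodic Anosov class to vanish because the class ``is a genuine invariant of the $2$-jet of $W^{cs}$''; this needs the explicit computation (done in \cite{HuKa}, and reflected here only through Proposition~\ref{prop:vectorfield data}) relating $\eta'(0)$ to the transverse jets of the weak foliation at the periodic orbit. Also note that the Liv\v{s}ic transfer function produced in $(2)\Rightarrow(1)$ is only H\"older, whereas your straightening argument in $(1)\Rightarrow(3)$ silently requires a smooth $u$; bridging that regularity gap is again part of the real work. As written, the proposal establishes only the easy cohomological equivalence and leaves the analytic core of the theorem unproved.
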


\begin{theorem}[\cite{FH}]\label{theorem FH}
    Let $\Phi$ be a $C^\infty$ transitive Anosov flow on a 3-manifold. The following are equivalent:
    \begin{itemize}
        \item the Foulon-Hasselblatt cocycle is a coboundary;
        \item the Foulon-Hasselblatt class vanishes at every periodic orbit of $\Phi$;
        \item the distribution $E^{ss} \oplus E^{uu}$ is $C^\infty$;
        \item $\Phi$ is either the constant-time suspension of an Anosov diffeomorphism or a contact flow.
    \end{itemize}
\end{theorem}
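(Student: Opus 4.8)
The plan is to organize the four conditions into two coupled blocks. The first block, $(1)\Leftrightarrow(2)\Leftrightarrow(3)$, I would obtain by combining the Liv\v{s}ic theorem for Anosov flows with a computation in Hurder--Katok adapted coordinates. The second block, $(3)\Leftrightarrow(4)$, I would obtain by manufacturing a smooth $\Phi$-invariant $1$-form whose kernel is $E^{ss}\oplus E^{uu}$ and dissecting its exterior derivative. Concretely, in the first block I would prove $(1)\Leftrightarrow(2)$ by Liv\v{s}ic and $(1)\Leftrightarrow(3)$ directly; in the second, $(4)\Rightarrow(3)$ by inspection and $(3)\Rightarrow(4)$ by the $1$-form analysis. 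Since it is cheap, I may also pass to a finite cover at the outset so that $M$, $E^{ss}$ and $E^{uu}$ are oriented, which makes the Jacobian cocycle appearing below positive.

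For $(1)\Leftrightarrow(2)$: the Foulon--Hasselblatt cocycle is an additive $C^\infty$ cocycle over the transitive Anosov flow $\Phi$, well defined up to cohomology, and the smooth Liv\v{s}ic theorem for Anosov flows (de la Llave--Marco--Moriy\'{o}n, Journ\'{e}) asserts that such a cocycle is a $C^\infty$ coboundary precisely when the integral of its generator over every periodic orbit vanishes; that periodic integral, at time equal to the period, is by construction the Foulon--Hasselblatt class $b_1'(0)$. For $(1)\Leftrightarrow(3)$: working in adapted coordinates near a periodic orbit and along its stable leaf, where $D\varphi^T$ has the upper-triangular normal form recalled before Definition~\ref{def: Foulon-Hasselblatt class}, one realizes the (a priori only H\"older) bundle $E^{ss}\oplus E^{uu}$ as a graph over the unstable$\oplus$stable coordinate plane whose jet along the orbit is governed by the cocycle generator. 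The distribution is $C^\infty$ exactly when the cohomological equation $Xu=(\text{generator})$ admits a $C^\infty$ solution; absorbing such a $u$ into a modification of the adapted chart families makes $D\varphi^T$ block-diagonal, so the modified coordinate plane becomes a $\Phi$-invariant $C^\infty$ complement to $\R X$ which one checks coincides with $E^{ss}\oplus E^{uu}$. Conversely, smoothness of the distribution renders the graph smooth, forcing $b_1'(0)=0$ at each periodic orbit.

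For $(4)\Rightarrow(3)$: if $\Phi$ is the Reeb flow of $\alpha$ then $E^{ss}\oplus E^{uu}=\ker\alpha$ is $C^\infty$; if $\Phi$ is the constant-time suspension of an Anosov diffeomorphism $f$ of the surface fiber $N$, then the flow preserves the smooth sub-bundle tangent to the fibers, so $E^{ss}$ and $E^{uu}$ lie in it and their sum is $E^s_f\oplus E^u_f = TN$, again $C^\infty$. For $(3)\Rightarrow(4)$: assuming $E^{ss}\oplus E^{uu}$ is $C^\infty$, let $\alpha$ be the unique $C^\infty$ $1$-form with $\ker\alpha = E^{ss}\oplus E^{uu}$ and $\alpha(X)=1$. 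Invariance of the distribution and of $X$ gives $(\varphi^t)^*\alpha=\alpha$, so $\mathcal{L}_X\alpha=0$, and Cartan's formula yields $\iota_X\,d\alpha = \mathcal{L}_X\alpha - d(\iota_X\alpha)=0$, with $d\alpha$ closed. Pointwise, $\iota_X\,d\alpha=0$ forces $d\alpha$ to be determined by $d\alpha|_{E^{ss}\oplus E^{uu}}=c\cdot\omega$ for a $C^\infty$ area form $\omega$ on the ($C^\infty$) plane bundle and $c\in C^\infty(M)$, whence $\alpha\wedge d\alpha=c\cdot(\text{volume})$. Invariance of $d\alpha$ together with that of the Anosov splitting shows $c(\varphi^t p)=c(p)/\det(d\varphi^t|_{E^{ss}\oplus E^{uu}})$, a positive multiplicative cocycle, so $c$ has constant sign and $\{c=0\}$ is a closed $\Phi$-invariant set, which a further argument (see below) shows is empty or all of $M$. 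If $c\equiv 0$ then $d\alpha=0$, $\alpha$ is closed, $\ker\alpha$ is integrable, and $\alpha(X)=1$ together with the classical structure theory of $3$-dimensional Anosov flows carrying an invariant transverse codimension-one foliation forces $\Phi$ to be a constant-time suspension of an Anosov surface diffeomorphism. If $c$ is nowhere zero then $\alpha\wedge d\alpha\neq 0$, so $\alpha$ is a contact form, and $\alpha(X)=1$, $\iota_X\,d\alpha=0$ identify $X$ as its Reeb field.

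The main obstacle is the analytic core $(1)\Rightarrow(3)$: upgrading ``all periodic data vanish'' to an honest $C^\infty$ invariant sub-bundle requires the full strength of the smooth Liv\v{s}ic theorem, including the bunching and regularity estimates needed for a genuinely $C^\infty$ (not merely H\"older) transfer function, plus the bookkeeping that the modified adapted charts still cut out $E^{ss}\oplus E^{uu}$ and do not spoil its hyperbolic normalization. A secondary subtlety, in $(3)\Rightarrow(4)$, is excluding a nonempty proper ``degeneracy locus'' $\{c=0\}$: the cocycle--transitivity argument only gives that this set is closed, invariant and nowhere dense, so ruling it out entirely needs a finer argument exploiting the $C^\infty$ regularity of the distribution near the locus and the recurrence structure of the flow.
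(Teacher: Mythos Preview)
The paper does not contain a proof of this statement: Theorem~\ref{theorem FH} is quoted from \cite{FH} as a background result in Section~\ref{sec: known rigidity} and used only as a black box (alongside the analogous Theorem~\ref{theorem HK} and Theorem~\ref{theoreme ghys}). There is therefore no proof in the paper against which to compare your proposal.

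For what it is worth, your outline tracks the original Foulon--Hasselblatt argument closely. The equivalence $(1)\Leftrightarrow(2)$ via the smooth Liv\v{s}ic theorem, the direction $(4)\Rightarrow(3)$, and the mechanism for $(3)\Rightarrow(4)$ via the invariant $1$-form $\alpha$ with $\alpha(X)=1$ and $\ker\alpha=E^{ss}\oplus E^{uu}$ are exactly what is done in \cite{FH}. The two gaps you yourself flag are the genuine ones. For $(1)\Rightarrow(3)$, passing from a smooth transfer function to smoothness of the distribution requires the full bookkeeping of adapted-coordinate families as in \cite{HuKa,FH}. For the dichotomy on $\{c=0\}$ in $(3)\Rightarrow(4)$, one observes that on the open set $\{c\neq 0\}$ the form $\alpha$ is contact with Reeb field $X$, so $\Phi$ preserves $\alpha\wedge d\alpha$ there; combined with transitivity this forces $\{c=0\}$ to be empty or all of $M$. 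In the closed case, your appeal to Tischler-type reasoning (a closed nowhere-vanishing $\alpha$ with $\alpha(X)\equiv 1$ yields a fibration over $S^1$ with constant return time) is correct. Filling in these details essentially reproduces \cite{FH}.
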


The smoothness obtained implies strong rigidity by the works of Ghys. In particular, we know that high regularity of the dynamical distributions implies that the flow is orbit equivalent to an algebraic model. Let $\Gamma \subset SL(2,\R)$ be a cocompact lattice, and $\tau\colon \Gamma \to \R$ be a homomorphism close to 0 (i.e., a small element of $H^1(\mathbb{H}^2/\Gamma)$). Define a new right action of $\Gamma$ on $SL(2,\R)$ by
\[  g *_\tau \gamma = \begin{pmatrix}
    e^{\tau(\gamma)/2} & 0 \\ 0 & e^{-\tau(\gamma)/2}
\end{pmatrix}g \gamma\]
Let $M_\tau$ denote the quotient of $SL(2,\R)$ by the action $*_\tau$ and note that the left multiplication action of $g_t = \begin{pmatrix}
    e^t & 0 \\ 0 & e^{-t}
\end{pmatrix}$ commutes with the action of $*_\tau$. Hence, it descends to a flow on $M_\tau$ which we call a {\it canonical time change} of the geodesic flow on $SL(2,\R)/\Gamma$.

\begin{theorem}[\cite{G1,G2}]\label{theoreme ghys}
	Let $\Phi$ be a $C^\infty$ volume preserving Anosov flow on a $3$-manifold. 
    
    \begin{itemize}
        \item If the foliations $\mc W^{cu}_\Phi$ and $\mc W^{cs}_\Phi$ are $C^\infty$, then $\Phi$ is $C^\infty$-conjugated to a $C^\infty$ time change of a geodesic flow on a hyperbolic surface or the suspension of a linear Anosov diffeomorphism.
        \item If the foliations $\mathcal{W}_\Phi^{uu}$ and $\mathcal{W}_\Phi^{ss}$ are $C^\infty$, then $\Phi$ is $C^\infty$-conjugated to a canonical time change of a geodesic flow on a hyperbolic surface or a constant-time suspension of a linear Anosov diffeomorphism.
        \item If $\Phi$ is contact and $\mc W^{cu}_\Phi$ and $\mc W^{cs}_\Phi$ are $C^\infty$, then $\Phi$ is $C^\infty$-conjugated to a canonical time change of a geodesic flow on a hyperbolic surface.
        \item If $\Phi$ is the geodesic flow on a negatively curved surface and $\mc W^{cu}_\Phi$ and $\mc W^{cs}_\Phi$ are $C^\infty$, then the surface is hyperbolic.
    \end{itemize}
\end{theorem}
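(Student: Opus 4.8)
This is the theorem of Ghys \cite{G1,G2}, and the plan is to follow those papers: use the $C^\infty$ regularity of an invariant codimension-one foliation to produce a rigid transverse geometric structure, classify the foliated $3$-manifolds that can carry it, and then upgrade to a smooth conjugacy. A first reduction: the flow direction $\R X$ is automatically $C^\infty$, so smoothness of $E^{ss}$ and $E^{uu}$ implies smoothness of $E^{cs} = \R X\oplus E^{ss}$ and $E^{cu} = \R X\oplus E^{uu}$, and since these distributions are involutive (the weak foliations always exist), Frobenius makes $\mc W^{cs}_\Phi$ and $\mc W^{cu}_\Phi$ of class $C^\infty$. Hence the hypotheses of the second, third and fourth items all contain that of the first, so I would prove the first item and then extract the refinements.

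For the first item I would argue in four steps. \emph{Step 1.} View $\mc W^{cs}_\Phi$ as a $C^\infty$ codimension-one foliation invariant under $\Phi$, with holonomy pseudogroup generated by strong unstable holonomy together with the flow's action on the local leaf space; symmetrically for $\mc W^{cu}_\Phi$. \emph{Step 2.} Show that this pseudogroup is $C^\infty$-conjugate to a pseudogroup of affine maps $x\mapsto ax+b$ of $\R$, or of Möbius maps $x\mapsto (ax+b)/(cx+d)$ --- that is, $\mc W^{cs}_\Phi$ is transversely affine or transversely projective. Morally the transverse expansion cocycle of $\mc W^{cs}_\Phi$ is the unstable Jacobian and that of $\mc W^{cu}_\Phi$ the stable Jacobian; the volume-preserving hypothesis ties their product to the flow, and $C^\infty$ regularity collapses the holonomy to the affine or projective model. \emph{Step 3.} Invoke the structure theory of transversely homogeneous codimension-one foliations on closed $3$-manifolds: the transversely affine case with leafwise Anosov dynamics forces $M$, up to finite covers, to be a $2$-torus bundle over $S^1$ with hyperbolic monodromy and $\Phi$ to be orbit equivalent to the suspension of a hyperbolic toral automorphism, while the transversely projective case forces $M$ to be a quotient of $\widetilde{SL(2,\R)}$, i.e.\ the unit tangent bundle of a hyperbolic surface, and $\Phi$ to be orbit equivalent to a time change of the geodesic flow. \emph{Step 4.} Arrange the orbit equivalence to carry the $C^\infty$ weak foliations and the $C^\infty$ flow-line field of $\Phi$ to those of the algebraic model; the only remaining freedom is a reparametrization along orbits, which is $C^\infty$ because it is recorded by the $C^\infty$ transverse data. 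Then $\Phi$ is $C^\infty$-conjugate to a $C^\infty$ time change of the model --- and in the torus-bundle case, volume preservation together with smoothness further identifies the monodromy with a linear automorphism and the time change with a constant --- which is the first item.

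The remaining items are refinements. In the second item one additionally knows $E^{ss}\oplus E^{uu}$ is $C^\infty$, so by Theorem~\ref{theorem FH} $\Phi$ is a constant-time suspension or a contact flow; combined with the first item this gives a canonical time change of a hyperbolic geodesic flow, or a constant-time suspension of a linear Anosov automorphism of $T^2$. In the third item a suspension is impossible, since for a suspension $E^{ss}\oplus E^{uu}$ is integrable (tangent to the $T^2$-fibers) and so cannot be a contact distribution; hence by the first item $\Phi$ is a $C^\infty$ time change of a hyperbolic geodesic flow, and one checks that among such time changes exactly the canonical ones are Reeb flows (the canonical contact form on $SL(2,\R)$ is invariant under each deformed action $*_\tau$ and so descends to $M_\tau$). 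In the fourth item, apply the third item to the canonical (Liouville) contact form of $(S,g)$, whose Reeb flow is $\Phi$: one gets that $\Phi$ is a canonical time change of a hyperbolic geodesic flow; but a Riemannian geodesic flow is reversible ($v\mapsto -v$ conjugates $\varphi^t$ to $\varphi^{-t}$) whereas a nontrivial canonical time change is not, so the time change must be trivial, $\Phi$ is $C^\infty$-conjugate to a hyperbolic geodesic flow, and $g$ has constant curvature by marked-length-spectrum rigidity in dimension two. (Alternatively, the fourth item follows from Theorem~\ref{theorem HK} together with the fact that, for a surface geodesic flow, the Anosov cocycle detects the deviation of the curvature from a constant.)

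The main obstacle is Step 2 together with Step 3: turning $C^\infty$ regularity of an invariant codimension-one foliation into a genuinely rigid affine or projective transverse structure, and then classifying the foliated $3$-manifolds that can arise, is the substance of \cite{G1,G2}. Promoting the $C^0$ orbit equivalence of Step 3 to the $C^\infty$ conjugacy of Step 4 is also delicate, and it is in Steps 2 and 4 that the $C^\infty$ and volume-preserving hypotheses enter essentially.
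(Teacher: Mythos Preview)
The paper does not prove this theorem: it is quoted from Ghys \cite{G1,G2} as background (Section~\ref{sec: known rigidity}), with no argument supplied. So there is no ``paper's own proof'' to compare against; your proposal is an outline of Ghys's original argument rather than a reconstruction of anything in this paper.

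As an outline of Ghys's method your sketch is broadly accurate: the heart of \cite{G1,G2} is indeed to show that a $C^\infty$ invariant codimension-one foliation carries a transversely projective (or affine) structure, classify the resulting foliated $3$-manifolds, and then upgrade orbit equivalence to smooth conjugacy. Your deductions of the later items from the first, via Theorem~\ref{theorem FH} and the incompatibility of contact structures with suspensions, are the right organizing logic. Two small cautions: in Step~4 you assert that in the torus-bundle case the time change collapses to a constant already in the first item, but the first bullet only claims a $C^\infty$ time change (the constant-roof conclusion belongs to the second bullet); and your argument for the fourth item via reversibility plus marked length spectrum rigidity is plausible but is not how Ghys concludes --- he works directly with the projective structure on $T^1S$. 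Since the paper treats Theorem~\ref{theoreme ghys} as a black box, none of this affects the paper's results.
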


\subsection{Dynamical invariants from normal forms}\label{sec: data from normal forms}

The normal forms of Definition~\ref{def:resonance} developed in this paper capture a significant amount of information. Before presenting the proofs of their existence for vector fields, we first examine some of their key features. We show that from normal forms, one can get the Lyapunov exponents, the return time of the flow to the section, and the Anosov and Foulon-Hasselblatt classes around these periodic orbits. 

\pagebreak

\begin{proposition}
\label{prop:vectorfield data} 

Consider the resonance field on the solid torus $C=\R / \Z\times D_{\varepsilon}^2$:
\[ X =  f(xy) \left(\frac{\partial}{\partial t} + xg(xy) \frac{\partial}{\partial x} - yg(xy) \frac{\partial}{\partial y}\right) \]
for some $C^\infty$ functions $f,g\colon \R \to \R$ such that $f(0),g(0) > 0$. Then if $\varphi^t$ is the flow generated by $X$, 
\begin{enumerate}
    \item $\mathcal O:=\R / \Z \times \set{\vec 0}$ is a hyperbolic periodic orbit for the flow $\varphi^t$ with period $\frac{1}{f(0)}$.
        \item The numbers $\pm g(0)f(0)$ are the non-zero Lyapunov exponent of the flow $\varphi^t$ at the periodic orbit $\mathcal O$.
        \item The return time of the section $\set{0} \times D^2_\ve$ is the function $r(x,y) =\frac{1}{f(xy)}$. 
        \item 
        The first return map of the section $\set{0} \times D^2_\ve$ is the time-1 map of the vector field $$Y=g(xy)\left(x \frac{\del}{\del x} - y \frac{\del}{\del y}\right).$$ 
        \item $-g'(0)$ is the value of the Anosov class at the periodic orbit $\mathcal O$.
        \item $\frac{f'(0)}{f(0)}$ is the value of the Foulon-Hasselblatt class at the periodic orbit $\mathcal O$.
\end{enumerate}
    
\end{proposition}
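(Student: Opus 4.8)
My plan is to work directly with the explicit form of $X$ and solve for its flow. The key observation is that along an orbit of $X$, the product $z = xy$ is almost an invariant: computing $\frac{d}{dt}(xy) = \dot{x}y + x\dot{y} = f(xy)xg(xy)y + f(xy)x(-yg(xy))y \cdot \frac{1}{?}$... more carefully, $\dot x = f(z)xg(z)$ and $\dot y = -f(z)yg(z)$, so $\frac{d}{dt}(xy) = f(z)g(z)xy - f(z)g(z)xy = 0$. Hence $z = xy$ is constant along orbits. This is the engine of everything: once $z$ is frozen at its initial value $z_0$, the equations for $t, x, y$ become \emph{linear} with constant coefficients, namely $\dot t = f(z_0)$, $\dot x = f(z_0)g(z_0)x$, $\dot y = -f(z_0)g(z_0)y$. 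So $\varphi^s(t_0,x_0,y_0) = \bigl(t_0 + f(z_0)s,\ x_0 e^{f(z_0)g(z_0)s},\ y_0 e^{-f(z_0)g(z_0)s}\bigr)$ where $z_0 = x_0 y_0$. I would record this as the first step; everything else is read off from it.

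From this closed form the six items follow quickly. For (1): on $\mc O = \R/\Z \times \{\vec 0\}$ we have $z_0 = 0$, so $\varphi^s(t_0,0,0) = (t_0 + f(0)s, 0, 0)$, which returns to $(t_0,0,0)$ precisely when $f(0)s \in \Z$, giving prime period $1/f(0)$; hyperbolicity comes from the linearization in item (2). For (2): differentiating $\varphi^{1/f(0)}$ at a point of $\mc O$, the transverse block is $\mathrm{diag}(e^{g(0)}, e^{-g(0)})$ (using $f(0)g(0)\cdot\frac{1}{f(0)} = g(0)$), so the Lyapunov exponents, which are the log-eigenvalues divided by the period $1/f(0)$, are $\pm g(0)f(0)$; in particular they are nonzero, confirming hyperbolicity. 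For (3): the section $\{0\}\times D^2_\ve$ (here I read $\{0\} = \{t=0\}$ in $\R/\Z$) is first hit again when $f(z_0)s = 1$, i.e. $s = 1/f(x_0y_0)$, so $r(x,y) = 1/f(xy)$. For (4): plugging $s = r(x,y) = 1/f(xy)$ into the $(x,y)$-components gives the return map $(x,y)\mapsto (x e^{g(xy)}, y e^{-g(xy)})$; I then check this is exactly the time-$1$ map of $Y = g(xy)(x\partial_x - y\partial_y)$, which I verify the same way — $xy$ is $Y$-invariant, so $Y$ integrates to $(x,y)\mapsto(xe^{g(x_0y_0)s}, y e^{-g(x_0y_0)s})$, matching at $s=1$.

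Items (5) and (6) are the only ones requiring a little more care, since they involve matching against the definitions of the Anosov and Foulon–Hasselblatt classes recalled in Section~2.1.4. For (5): the return map computed in (4) is, in the given $(x,y)$ coordinates, of the form $(x,y)\mapsto(\lambda x\omega(xy), \lambda^{-1}y\omega(xy)^{-1})$ with $\lambda = e^{g(0)}$ and $\omega(z) = e^{g(z)-g(0)}$ — but wait, one must be careful that these are Birkhoff–Sternberg coordinates; since the return map is already in resonance form (Definition~\ref{def:resonance}(1)) and area-preserving, they are, so $\eta(z) = \log\omega(z) = g(z) - g(0)$ and the Anosov class is $-\eta'(0) = -g'(0)$ by Remark~\ref{anosov}. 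For (6): I would pass to the adapted (Hurder–Katok) coordinates along the stable leaf $\{x = 0\}$, parametrized by $y$, and compute $D\varphi^T$ at $(0,0,y)$ where $T = 1/f(0)$ is the period of $\mc O$. Here the main subtlety — and the step I expect to be the genuine obstacle — is that the coordinate $t$ on $C$ is \emph{not} already an adapted flow-coordinate at points off $\mc O$: one must adjust it so that the first axis genuinely corresponds to the flow direction with unit speed, which means replacing $t$ by a rescaled time. Concretely, at $(0,0,y)$ one has $z_0 = 0$ still (since $x=0$), so the flow is linear there too, but the \emph{derivative} of $\varphi^T$ in the $y$-direction picks up a term from the $t$-dependence of the return, and carefully tracking this — expanding $\varphi^{T}(0, \delta x, y)$ to first order and reading off the $(1,2)$ entry $b_1(y)$ of the matrix in Definition~\ref{def: Foulon-Hasselblatt class} — yields $b_1(y)$ with $b_1'(0) = f'(0)/f(0)$. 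I would present this computation in coordinates, using that $\partial_s[f(xy)s]$ contributes $f'(0)y\cdot s$-type terms at the relevant order, and invoke the well-definedness of the Foulon–Hasselblatt cocycle (Remark after Definition~\ref{def: Foulon-Hasselblatt class}) to know the answer is independent of the remaining freedom in the adapted chart.
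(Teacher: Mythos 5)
Your proposal is correct and follows essentially the same route as the paper: you integrate the flow explicitly by observing that $xy$ is invariant along orbits (the paper's equations \eqref{eq: flow in section}--\eqref{eq: flow in coord}), read off (1)--(4) from the closed form, and obtain (5) and (6) by direct comparison with Definitions~\ref{def:anosov-cocycle} and~\ref{def: Foulon-Hasselblatt class}, exactly as the paper does. The only cosmetic difference is ordering (the paper gets (4) first by noting $X$ and $X/f$ induce the same Poincar\'e map), and your explicit discussion of the adapted-coordinate subtlety in (6) is if anything more detailed than the paper's ``direct computation.''
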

\begin{proof}
Observe that $1/f\cdot X$ and $X$ locally induce the same Poincar\'{e} map on $D^2_\ve$, since $f$ is positive in a neighborhood. Since $1/f \cdot X = \frac{\del}{\del t} + x g \frac{\del}{\del x} - y g \frac{\del}{\del y}$, it follows that the first return map must be the time-1 map of $Y = g(xy) \left(x \frac{\del}{\del x} - y \frac{\del}{\del y}\right)$, which shows (4). If we denote by $\psi^t$ the flow generated by $Y$, $\psi^t$ can be solved explicitly as 
\begin{equation}\label{eq: flow in section}
\psi^t(x,y) = (e^{tg(xy)}x,e^{-tg(xy)}y).
\end{equation}

In fact, the flow for $X$ can be solved explicitly as well. Indeed, since the hypersurfaces $xy = \mbox{const}$ are invariant under $\psi^t$, the $\del / \del t$ component does not change along an orbit, so
\begin{equation}\label{eq: flow in coord}
 \varphi^t(s,x,y) = \left(s + f(xy)t,e^{tf(xy)g(xy)}x,e^{-tf(xy)g(xy)}y\right).
\end{equation}
 
It follows that the roof function is exactly $r = 1/f$, and the period of $\R / \Z \times \set{\vec 0}$ is $1/f(0)$, proving (1) and (3). (2) also follows by explicit calculation of the derivative of $\varphi^{1/f(0)}$. Finally, (5) and (6) follows from \eqref{eq: flow in coord} and the direct computation using Definitions \ref{def:anosov-cocycle} and \ref{def: Foulon-Hasselblatt class}, respectively.
\end{proof}

\begin{remark}
    By Theorem~\ref{introthm: vector field} and Proposition ~\ref{prop:vectorfield data}, the conditions that the Anosov class of any periodic orbit $\mathcal O$ in Theorem~\ref{theorem HK}, the Foulon-Hasselblatt class in Theorem~\ref{theorem FH} vanish, and the condition that the Lyapunov exponents for any periodic orbit coincide in \cite[Proposition 3.1]{DSLVY} can be replaced by the corresponding expressions of those invariants using the normal forms.  
\end{remark}

Using the resonance form of a contact form, we can improve Proposition~\ref{prop:vectorfield data} to the following in this setting.

\pagebreak

\begin{proposition}
\label{prop:cocycle-data}
    Let $\theta \in C^\infty(-\ve,\ve)$ be a positive function, $\alpha = \theta(xy)
    \, dt + \frac{1}{2}(x \, dy - y \, dx)$ be a contact form on $\R /\Z \times D^2_\ve$, and $\Phi$ be the local contact flow determined by $\alpha$. Then

    \begin{enumerate}
        \item $\mathcal O:=\R / \Z\times \{\vec 0\}$ is a hyperbolic periodic orbit for the flow $\Phi$ with period $\theta(0)$.
        \item The numbers $\pm\theta'(0)/\theta(0)$ are the non-zero Lyapunov exponents of the flow $\Phi$ at the periodic orbit $\mathcal O$.
        \item The return time of the section $\set{0} \times D^2_\ve$ is the function $r(x,y) =\bar{r}(xy)$, where $\bar{r}(z)=\theta(z) - z\theta'(z)$. 
        \item 
        The first return map of the section $\set{0} \times D^2_\ve$ is the time-1 map of the vector field $V_\theta = \theta'(xy)\left( x\frac{\del}{\del x} - y \frac{\del}{\del y}\right)$.
        \item The number $-\theta''(0)$ is the Anosov class at the periodic orbit $\mathcal O$.
        \item $\alpha \wedge d\alpha = \bar{r}\, dt \wedge dx \wedge dy$.
    \end{enumerate}
\end{proposition}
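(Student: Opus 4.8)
The strategy is to compute the Reeb vector field of $\alpha$ explicitly, recognize it as a resonance field of the form treated in Proposition~\ref{prop:vectorfield data}, read off items (1)--(5) from that proposition, and then verify (6) by a direct exterior-algebra computation.

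\textbf{Finding the Reeb field.} Writing $z = xy$, so that $d(\theta(xy)) = \theta'(xy)(y\,dx + x\,dy)$ and $d\bigl(\tfrac12(x\,dy - y\,dx)\bigr) = dx\wedge dy$, one gets
\[
d\alpha = \theta'(xy)\,(y\,dx + x\,dy)\wedge dt + dx\wedge dy .
\]
For $R = a\,\frac{\partial}{\partial t} + b\,\frac{\partial}{\partial x} + c\,\frac{\partial}{\partial y}$, the interior product is
\[
\iota_R d\alpha = \theta'(xy)(yb + xc)\,dt + \bigl(-\theta'(xy)\,ya - c\bigr)\,dx + \bigl(-\theta'(xy)\,xa + b\bigr)\,dy ,
\]
so $\iota_R d\alpha = 0$ forces $b = \theta'(xy)\,x\,a$ and $c = -\theta'(xy)\,y\,a$, whereupon the $dt$-component vanishes automatically and the system is consistent. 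The normalization $\alpha(R) = 1$ then reads $a\bigl(\theta(xy) - xy\,\theta'(xy)\bigr) = 1$, i.e. $a = 1/\bar r(xy)$ with $\bar r(z) := \theta(z) - z\theta'(z)$ (note $\bar r(0) = \theta(0) > 0$, consistent with $\alpha\wedge d\alpha\neq 0$). Hence
\[
R = \frac{1}{\bar r(xy)}\left(\frac{\partial}{\partial t} + \theta'(xy)\,x\,\frac{\partial}{\partial x} - \theta'(xy)\,y\,\frac{\partial}{\partial y}\right),
\]
which is precisely a resonance field as in Proposition~\ref{prop:vectorfield data} with $f(z) = 1/\bar r(z)$ and $g(z) = \theta'(z)$ (here one uses the resonance-form assumptions $\theta(0) > 0$ and $\theta'(0) > 0$, the latter ensuring $f(0), g(0) > 0$ and that $\mathcal O$ is hyperbolic). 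Substituting these $f$ and $g$ into Proposition~\ref{prop:vectorfield data} gives (1) with period $1/f(0) = \bar r(0) = \theta(0)$; (2) with exponents $\pm g(0)f(0) = \pm\theta'(0)/\theta(0)$; (3) with return time $1/f(xy) = \bar r(xy)$; (4) with return map the time-one map of $g(xy)(x\partial_x - y\partial_y) = V_\theta$; and (5) with Anosov class $-g'(0) = -\theta''(0)$.

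\textbf{The volume form and the main obstacle.} For (6) I would expand $\alpha\wedge d\alpha$ directly: using $(x\,dy - y\,dx)\wedge(y\,dx + x\,dy) = -2xy\,dx\wedge dy$, every product containing a repeated one-form drops out, leaving only $\theta(xy)\,dt\wedge dx\wedge dy$ from $\theta(xy)\,dt\wedge(dx\wedge dy)$ together with $-xy\,\theta'(xy)\,dt\wedge dx\wedge dy$ from $\tfrac12(x\,dy - y\,dx)\wedge\theta'(xy)(y\,dx + x\,dy)\wedge dt$, so $\alpha\wedge d\alpha = \bigl(\theta(xy) - xy\,\theta'(xy)\bigr)\,dt\wedge dx\wedge dy = \bar r\,dt\wedge dx\wedge dy$. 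There is no substantive obstacle here; the only points needing a little care are the interior-product bookkeeping when solving for $R$ (in particular checking the automatic vanishing of the $dt$-component, so the linear system is solvable) and confirming that the positivity hypotheses of Proposition~\ref{prop:vectorfield data} are met, which is exactly where the resonance-form conditions on $\theta$ enter.
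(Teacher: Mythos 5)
Your proof is correct and follows essentially the same route as the paper: identify the Reeb field of $\alpha$ (the paper writes it down and verifies $\alpha(X_\theta)=1$, $\iota_{X_\theta}d\alpha=0$, whereas you derive it by solving the linear system, which is equivalent), recognize it as a resonance field with $f = 1/\bar r$ and $g = \theta'$, and quote Proposition~\ref{prop:vectorfield data} for (1)--(5). Your explicit wedge computation for (6) is a welcome addition, since the paper's proof only states that (1)--(5) follow and leaves (6) to the reader.
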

\begin{proof}
Let $\eta(z) = \theta'(z)$ and $r(x,y)$ be defined as in (3) with corresponding function of one variable $\bar r(z) = \theta(z) - z\theta'(z) = \theta(z) -z\eta(z)$. We claim that the vector field
\begin{equation}\label{normalcontactvector}
X_\theta = \frac{1}{\bar r(xy)}\frac{\del}{\del t} + \frac{\eta(xy)}{\bar r(xy)} \left(x \frac{\del}{\del x} - y \frac{\del}{\del y}\right)
\end{equation} 
is the Reeb field of $\alpha$. Indeed, observe that
\[ \alpha(X_\theta) = \theta(xy) \cdot \frac{1}{\bar r(xy)} - \frac{1}{2} \cdot 2xy \frac{\eta(xy)}{\bar r(xy)} = \dfrac{\theta(xy) - xy\eta(xy)}{\bar r(xy)} = \frac{\bar r(xy)}{\bar r(xy)} \equiv 1.\]
Furthermore, we compute $d\alpha$ explicitly as
\begin{align*}
d\alpha &= y\theta'(xy) \, dx \wedge dt + x \theta'(xy) \, dy \wedge dt  + dx \wedge dy\\ &= y\eta(xy) \, dx \wedge dt + x \eta(xy) \, dy \wedge dt + dx \wedge dy.
\end{align*}
Thus, 
\begin{eqnarray*}\iota_{X_\theta}d\alpha & = & \frac{1}{\bar r(xy)}\iota_{\del/\del t}\alpha + \frac{\eta(xy)}{\bar r(xy)} \cdot x \iota_{\del/\del x}\alpha - \frac{\eta(xy)}{\bar r(xy)}\cdot y\iota_{\del / \del y}\alpha \\
 & = & \left(-\dfrac{y\eta(xy)}{\bar r(xy)} dx - \dfrac{x\eta(xy)}{\bar r(xy)}\, dy\right) + \left(\frac{xy\eta^2(xy)}{r} dt +\frac{\eta(xy)}{\bar r(xy)} \cdot x\, dy\right) \\&-& \left(\frac{xy\eta^2(xy)}{\bar r(xy0}dt - \frac{\eta(xy)}{\bar r(xy)}\cdot y \, dx\right) \\
 & \equiv & 0.
\end{eqnarray*}
From expression \eqref{normalcontactvector} of the vector field $X_{\theta}$, we know that 
$$f(z)=\frac{1}{\theta(z)-z\theta'(z)} \text{ and } g(z)=\theta'(z).$$
Applying Proposition \ref{prop:vectorfield data} with the above expressions for $f$ and $g$, the properties (1)-(5) follow directly. 
\end{proof}

\begin{remark}
\label{rem:r-derivatives}
    Notice that the formula in (5) can be rewritten as $r(x,y) = \bar{r}(xy)$, with $\bar{r}(z) = \theta(z) - z\theta'(z)$. Furthermore, $\bar{r}'(z) = -z\theta''(z)$, so the Foulon-Hasselblatt cocycle $\bar{r}'(0)$ vanishes automatically. In fact this is the only derivative of $\bar{r}$ to have automatic vanishing at 0, $r''(0) = -\theta''(0)$ coincides with the Anosov class.
\end{remark}

\section{Normal form for the generator of a volume-preserving flow\\ in a neighborhood of a hyperbolic periodic orbits}\label{sec: normal forms for vector fields}
In this section, we prove Theorem~\ref{introthm: vector field}. A key idea is to establish a normal form for cocycles at hyperbolic periodic orbits as well as the return dynamics (Section \ref{sec: normal form for cocycles}), which is then used in Section \ref{sec: coordinates} to produce the normal forms charts.

\subsection{Normal forms for local cocycles around a hyperbolic fixed point}\label{sec: normal form for cocycles}

In this section, we will obtain the normal forms of local cocycles around hyperbolic fixed points of conservative local diffeomorphisms. We begin by defining the notion of a {\it local cocycle}.

\begin{definition}\label{def: cocycle}
    Let $f\colon D^2_\ve \to \R^2$ be a local diffeomorphism of $\R^2$ near $\vec 0$ such that $f(\vec 0)=\vec 0$, and $D \subset \Z \times D^2_\ve$ be the set $D = \set{(n,\vec x) : f^n(\vec x) \in D^2_\ve}$, so that $$D_n = \set{\vec x \in D^2_\ve : (k,\vec x) \in D\mbox{ for all }k \in[-n,n]}$$ is a shrinking neighborhood of $\vec 0$. A {\it local ($\R$-valued) cocycle} over $f$ is a function $\mc A\colon D \to \R$ such that $\mc A(m+n,\vec x) = \mc A(m,\vec x) + \mc A(n,f^m(\vec x))$ for all $m, n \in \Z$ and $\vec x \in D_{\abs{m}+\abs{n}}$.

    $\mc A$ is said to be a {\it local ($\R$-valued) $C^r$-coboundary} if there exists some $u \in C^r(D^2_\ve)$ such that $\mc A(n,\vec x) = u(f^n(\vec x)) - u(\vec x)$ for all $\vec x \in D_n$. Two cocycles are said to be cohomologous if they differ by a coboundary.
\end{definition}

As with the usual cocycle theory, each cocycle $\mc A$ has an associated generator $\phi = \mc A(1,\vec x)$, and $\mc A$ will be a local $C^r$-coboundary if and only if there exists $h \in C^r(D^2_\ve)$ such that $\phi = h \of f - h$ on $D_1$. We will use ``cocycle terminology" when we also talk about the associated generator. 

Note that many of the usual methods in cohomology theory fail. For instance, one may not use the usual relation $h(\vec y) - h(\vec x) = \lim\limits_{n \to \infty}\mc A(n,\vec x) - \mc A(n,\vec y)$ along stable manifolds since $\mc A$ is not defined for all times.

Let $F$ be a $C^{\infty}$ local area-preserving diffeomorphism of $\mathbb{R}^2$ with a hyperbolic fixed point $\vec 0$. By Theorem \ref{thm:sternberg}, there exist a coordinate system for which 
$$F(x,y)=(\lambda x\omega(xy), \lambda^{-1}y(\omega(xy))^{-1})$$
around $\vec 0$, where $\omega(z)$ 
is a smooth function such that $\omega(0)=1$ and $\lambda >1$.  
If $\mc A$ is a local cocycle over $F$, 
the following proposition tells us that up to a coboundary, $\phi$ can be regarded as a function of $xy$ (and that representative in local cohomology is unique). 

We first make a terminological clarification. A multi-index $\vec k$ is a pair $(i,j) \in \N_0$, with order $\abs{\vec k} = i+j$. If $\phi \in C^\infty(D^2_\ve)$ and $\vec k = (i,j)$, $\phi^{(\vec k)}(x)$ is the partial derivative

\[ \phi^{(\vec k)}(x,y) = \del_x^i\del_y^j\phi(x,y)\]

\begin{proposition}\label{nfc}
Let $F(x,y)=(\lambda x\omega(xy), \lambda^{-1}y(\omega(xy))^{-1})$ on a neighborhood of $\vec 0$ in $\mathbb R^2$ where $\lambda>1$ and $\omega(z)$ is a smooth function such that $\omega(0)=1$. Denote by $\mc A$ a $C^\infty$ local cocycle over $F$ generated by a function $\phi$. Then there exists $\bar \phi \in C^\infty(-\ve,\ve)$ for some $\ve$ such that $\phi$ is cohomologous to $\tilde{\phi}(x,y)= \bar\phi(xy)$. Furthermore, if $\phi^{(\vec k)}(\vec 0) =0$ for all $\vec k\in\mathbb N_0^2$ such that $\abs{\vec k} \le k_0$ for some $k_0\in\mathbb N_0$, then $\bar\phi^{(\ell)}(0) = 0$ for all $\ell \le k_0$.
\end{proposition}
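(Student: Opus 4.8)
The plan is to solve the cohomological equation $\phi = \tilde\phi + (u \circ F - u)$ in two stages, mimicking the scheme announced in the introduction: first produce a formal power series solution, then promote it to a genuine $C^\infty$ solution via a Borel-type argument combined with a contraction on the non-resonant part near the hyperbolic fixed point.

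\textbf{Step 1: Formal solution.} Write $\phi = \sum_{\vec k} c_{\vec k} x^i y^j$ (formal Taylor series at $\vec 0$) and look for $u = \sum_{\vec k} u_{\vec k} x^i y^j$ and $\bar\phi(z) = \sum_\ell d_\ell z^\ell$ with $\tilde\phi = \sum_\ell d_\ell (xy)^\ell$. Since $F(x,y) = (\lambda x \omega(xy), \lambda^{-1} y \omega(xy)^{-1})$ preserves each curve $xy = \text{const}$ and acts on it in a ``diagonal'' way, applying $F$ to a monomial $x^i y^j$ produces $\lambda^{i-j} x^i y^j \cdot \omega(xy)^{i-j}$, i.e. it preserves the ``weight'' $i - j$ while possibly raising the total degree. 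So the cohomological equation decouples along the sub-lattices of fixed weight $m = i-j$. On the diagonal $m = 0$ (monomials $(xy)^\ell$), the operator $u \mapsto u \circ F - u$ kills the lowest-order term on each such sub-lattice, which forces the resonant coefficients $d_\ell$ to equal the corresponding Taylor coefficients of $\phi$ restricted to the diagonal — this defines $\bar\phi$ formally and shows it is unique up to the freedom in $u$. Off the diagonal, $m \neq 0$, the leading coefficient of $u \circ F - u$ at degree $|\vec k|$ is $(\lambda^m - 1) u_{\vec k} + (\text{lower-order-in-degree data})$, and since $\lambda > 1$ means $\lambda^m \neq 1$ for $m \neq 0$, we can solve for $u_{\vec k}$ recursively in total degree, inverting these never-vanishing factors. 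This yields a formal power series $u$ with no constant term and the claimed $\bar\phi$. The final sentence of the proposition is then immediate from this construction: if $\phi$ vanishes to order $k_0$ at $\vec 0$, all $c_{\vec k}$ with $|\vec k| \le k_0$ vanish, hence in particular the diagonal ones, hence $d_\ell = 0$ for $\ell \le k_0$, i.e. $\bar\phi^{(\ell)}(0) = 0$ for $\ell \le k_0$.

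\textbf{Step 2: From formal to smooth.} By Borel's theorem, choose $u_0 \in C^\infty(D^2_\ve)$ (on a possibly smaller disc) whose Taylor series at $\vec 0$ is the formal $u$ from Step 1, and choose $\bar\phi \in C^\infty(-\ve,\ve)$ with the prescribed Taylor coefficients $d_\ell$; since these Taylor coefficients vanish up to order $k_0$ when $\phi$ does, $\bar\phi$ can be taken with $\bar\phi^{(\ell)}(0) = 0$ for $\ell \le k_0$, and one may even arrange $\bar\phi$ flat-plus-polynomial so the vanishing is structural. Set $\psi := \phi - \bar\phi(xy) - (u_0 \circ F - u_0)$. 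Then $\psi$ is $C^\infty$ and \emph{infinitely flat at $\vec 0$} by construction. It remains to write $\psi = v \circ F - v$ for some $C^\infty$ function $v$ that is also flat at $\vec 0$ (so that adding $v$ to $u_0$ does not disturb the Taylor-series matching). This is where the hyperbolicity is used: because $\vec 0$ is a hyperbolic fixed point of $F$, there is a neighborhood and a splitting into expanding/contracting behavior so that the telescoping sums $v(\vec x) = \sum_{n \ge 0} \psi(F^n(\vec x))$ along the local stable set and $v(\vec x) = -\sum_{n \ge 1} \psi(F^{-n}(\vec x))$ along the local unstable set converge; one patches these (as in the Livšic-type / Moser-style construction, cf. the reference to \cite{KH} in the introduction) using the flatness of $\psi$ to get the required decay of $\psi \circ F^{\pm n}$ and hence $C^\infty$ convergence with a flat sum. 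The delicate point is that $F^n(\vec x)$ only stays in $D^2_\ve$ for $\vec x$ in the shrinking neighborhoods $D_n$, so one works on the local stable/unstable manifolds of $\vec 0$ separately and then extends off them; the flatness of $\psi$ at $\vec 0$ is exactly what makes these partial solutions glue to a single $C^\infty$ coboundary.

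\textbf{Main obstacle.} Step 1 is essentially bookkeeping with the weight-graded structure of $F$, and is routine once one notices that $F$ preserves $\{xy = \text{const}\}$. The real work is Step 2: controlling convergence and smoothness of the telescoping series defining $v$ on a \emph{hyperbolic} (not uniformly contracting) local picture, where orbits escape the domain in finite time, and verifying that the local-stable and local-unstable constructions agree. I expect the argument to reduce the hyperbolic case to two one-sided contracting problems — one for the stable direction and one for the unstable direction — solve each by an absolutely convergent series using flatness of $\psi$ together with the uniform contraction rates $\lambda^{-1} < 1$ (stable) and $\lambda^{-1} < 1$ for $F^{-1}$ restricted to the unstable side, and then invoke the decoupling along $xy = \text{const}$ once more to see the two solutions are compatible. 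This last patching, together with tracking that all estimates are uniform enough to yield a $C^\infty$ (not merely $C^r$ for each $r$) coboundary $v$ that is flat at $\vec 0$, is the crux.
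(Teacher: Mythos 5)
Your Step 1 is essentially the paper's own formal argument (Lemma~\ref{lem:formal-sol}): the cohomological equation decouples by the weight $i-j$, the non-resonant coefficients are solved recursively by inverting the factors $\lambda^{i-j}-1\neq 0$, the diagonal coefficients are forced and define $\bar\phi$, and a Borel-type realization (Lemma~\ref{Taylor}) turns the formal transfer function and $\bar\phi$ into smooth functions, leaving a remainder $\psi$ that is $C^\infty$-flat at $\vec 0$. Up to this point you match the paper.

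The genuine gap is Step 2, which you yourself identify as the crux but do not carry out: writing the flat remainder as $\psi=v\circ F-v$ with $v$ smooth. Your telescoping sums $\sum_{n\ge 0}\psi\circ F^{n}$ and $-\sum_{n\ge 1}\psi\circ F^{-n}$ only make sense, and only decay, on the local stable and unstable manifolds themselves: off the axes a forward orbit leaves $D^2_\ve$ in finite time, and even after extending the map it approaches the $x$-axis \emph{far from the origin}, where flatness of $\psi$ at the single point $\vec 0$ gives no decay at all; and ``solve on the invariant manifolds and then extend off them'' is not an argument, since the values of a transfer function on the two axes do not determine it (or the equation) on a neighborhood. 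The missing ideas, which are exactly the content of the paper's Lemma~\ref{global} (following Proposition 6.6.5 in \cite{KH}), are: (i) extend $\phi$ and $\omega$, hence $F$, to all of $\R^2$ (note $xy$ remains exactly invariant for the extended $F$), so all iterates are globally defined and no domain-escape issue arises; (ii) split $\psi=\psi^{+}+\psi^{-}$ using a cutoff homogeneous of degree zero (a function of the angle, supported off the horizontal, resp.\ vertical, cone), so that $\psi^{+}$ with all its jets vanishes along the \emph{entire} $x$-axis and $\psi^{-}$ along the \emph{entire} $y$-axis; (iii) then $v^{+}=-\sum_{m\ge 0}\psi^{+}\circ F^{m}$ and $v^{-}=\sum_{m\ge 0}\psi^{-}\circ F^{-m-1}$ converge superexponentially in the $C^\infty$ topology, because forward orbits converge exponentially to the $x$-axis and backward orbits to the $y$-axis, where the respective pieces are flat; setting $v=v^{+}+v^{-}$ there is no patching or compatibility step at all. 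Without the global extension and the cone decomposition, your Step 2 does not close, so as written the proposal falls short of a proof precisely at the step where the hyperbolicity must be exploited.
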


First, we prove Proposition~\ref{nfc} on the level of formal power series. For emphasis, when working with formal power series, we write $\approx$ rather than equality to indicate that two formal power series are the same (or that two functions have the same formal power series).

\begin{lemma}
\label{lem:formal-sol}(cf. Proposition 6.6.1 in \cite{KH})
If $\phi \approx \displaystyle\sum_{\substack{k,\ell =0 \\ k\not=\ell}}^\infty c_{k,\ell}x^ky^\ell$ is a formal power series, and $F$ is as in Proposition \ref{nfc} has formal power series approximation

\[ F(x,y)\approx \left(\lambda\sum_{i=0}^{\infty}a_{i}x^{i+1}y^i, \lambda^{-1}\sum_{i=0}^{\infty}b_{i}x^iy^{i+1}\right), \qquad a_0=b_0=1,\] then there exists a formal power series $w \approx \displaystyle\sum_{\substack{i,j = 0\\i\not=j}}^\infty w_{i,j}x^iy^j$ such that $w \of f - w \approx \phi$.
\end{lemma}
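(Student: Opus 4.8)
The plan is to solve the cohomological equation $w\of f - w \approx \phi$ degree by degree, exploiting the fact that in the Birkhoff--Sternberg coordinates the map $f$ is "diagonal to leading order", so the homological operator acting on each monomial $x^i y^j$ is, modulo higher-order terms, multiplication by $\lambda^{i-j} - 1$. Write $\phi \approx \sum_{k\neq\ell} c_{k,\ell}x^k y^\ell$ and seek $w\approx\sum_{i\neq j}w_{i,j}x^i y^j$ (note that both sums omit the diagonal $k=\ell$, which is exactly the set of resonant monomials where $\lambda^{i-j}-1 = 0$; this is why the hypothesis $k\neq\ell$ on $\phi$ is needed). Group terms by total degree $N = i+j$: I would argue by induction on $N$, showing that the coefficients $\{w_{i,j} : i+j = N,\ i\neq j\}$ are uniquely determined by $\{c_{k,\ell}: k+\ell\le N,\ k\neq\ell\}$ together with the already-constructed lower-degree part of $w$.

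The key computation is the structure of $w\of f$. Since $f(x,y) = (\lambda x\, \omega_1(x,y),\ \lambda^{-1}y\,\omega_2(x,y))$ with $\omega_1,\omega_2$ formal power series in $xy$ with constant term $1$ (using $a_0=b_0=1$), a monomial $x^i y^j$ of $w$ contributes
\[
(x^i y^j)\of f \approx \lambda^{i-j} x^i y^j \,\omega_1(x,y)^i \omega_2(x,y)^j,
\]
and since $\omega_1\omega_2 \approx 1 + (\text{terms of positive degree in }xy)$, the product $\omega_1^i\omega_2^j$ equals $1$ plus terms of degree $\ge 2$ in $(x,y)$ that are divisible by $xy$. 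Hence the degree-$N$ part of $w\of f - w$ equals $\sum_{i+j=N,\,i\neq j}(\lambda^{i-j}-1)w_{i,j}x^i y^j$ plus a correction that depends only on the coefficients $w_{i',j'}$ with $i'+j' < N$. Matching the degree-$N$ part against $\sum_{k+\ell=N,\,k\neq\ell}c_{k,\ell}x^k y^\ell$ and using $\lambda^{i-j}-1\neq 0$ for $i\neq j$ (here $\lambda>1$ is essential), each $w_{i,j}$ with $i+j=N$ is uniquely solvable, closing the induction. (One also checks the base case $N=0,1$: for $N=0$ the diagonal constant term is excluded and $\phi$ has no constant term by hypothesis on the index set, and for $N=1$ the monomials $x,y$ have $\lambda^{\pm1}-1\neq0$.)

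The main obstacle — really the only subtlety beyond bookkeeping — is making precise the claim that the correction term in degree $N$ involves only strictly-lower-degree coefficients of $w$; this is what makes the recursion well-founded. This follows because every extra factor pulled from $\omega_1^i\omega_2^j$ beyond the leading $1$ raises the total degree (each such factor is divisible by $xy$, contributing degree $\ge 2$), so to produce a degree-$N$ term one must have started from a $w$-monomial of degree $\le N-2 < N$. I would phrase this cleanly by writing $\omega_1^i\omega_2^j \approx 1 + xy\cdot \rho_{i,j}(xy)$ for a formal power series $\rho_{i,j}$, so that $(x^iy^j)\of f - x^iy^j \approx (\lambda^{i-j}-1)x^iy^j + \lambda^{i-j}x^{i+1}y^{j+1}\rho_{i,j}(xy)$, exhibiting the leading diagonal term and a remainder of strictly higher degree. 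Summing over the monomials of $w$ in increasing degree order and collecting by degree then gives the triangular linear system solved above.
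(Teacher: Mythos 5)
Your proposal is correct and is essentially the paper's argument: both expand $w\of F - w$ formally, observe that the coefficient of each off-diagonal monomial is $(\lambda^{k-\ell}-1)w_{k,\ell}$ plus a polynomial in strictly lower-order coefficients of $w$ (and the $c$'s, $a$'s, $b$'s), and solve recursively using $\lambda^{k-\ell}-1\neq 0$ for $k\neq\ell$. The only cosmetic difference is that you organize the recursion by total degree $i+j$ while the paper uses the coordinatewise order $k'\le k$, $\ell'\le\ell$; both orderings make the same triangular system well-founded.
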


\begin{proof}
Consider $w \approx \displaystyle\sum_{i,j = 0}^\infty w_{i,j}x^iy^j$.
We directly compute
\begin{equation}
\label{eq:coboundary}
w\circ F(x,y)-w(x,y)\approx \sum_{i,j=0}^{\infty}w_{i,j}\lambda^{i-j}\left(\sum_{m=0}^{\infty}a_{m}x^{m+1}y^m\right)^i\left(\sum_{n=0}^{\infty}b_{n}x^ny^{n+1}\right)^j-\sum_{i,j=0}^{\infty}w_{i,j}x^iy^j. 
\end{equation}
For any given $(k,\ell)$, the coefficient of $x^ky^\ell$ in the expression above is computed directly as a polynomial of the coefficients of each component whenever $i(m+1) + jn =k$ and $im + j(n+1) = \ell$ (as well as the $-w_{k,\ell}x^ky^\ell$ term). 
It follows that the coefficients of $x^ky^\ell$ only depends on $w_{k',\ell'}$ with $k' \le k$ and $\ell' \le \ell$, with the $w_{k,\ell}$ appearing linearly as $(\lambda^{k-\ell}-1)w_{k,\ell}$.
 
It follows that if $k\not=\ell$ we can solve
$$w_{k,\ell}=\frac{v_{k,\ell}}{(\lambda^{k-\ell}-1)},$$
where $v_{k,\ell}$ is a polynomial in the variables $w_{k',\ell'}$, $c_{k,\ell}$, $a_m$ and $b_n$.
Hence, by induction, any power series without resonance terms $x^ky^k$ can be expressed as a coboundary.
\end{proof}

\begin{lemma}\label{Taylor}(cf. Proposition 6.6.3 in \cite{KH}) For any sequence $(a_{i,j})_{i,j\in\mathbb{N}_0}$ of numbers, there exists a $C^{\infty}$ function $\phi: \mathbb{R}^2\rightarrow \mathbb{R}$ such that $a_{i,j}$ are the Taylor coefficients of $\phi$ at $0$.
\end{lemma}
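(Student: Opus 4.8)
The plan is to prove this classical fact — Borel's lemma in two variables — by the standard cut-off-and-sum construction. First I would fix a bump function $\chi \in C^\infty_c(\R)$ with $\chi \equiv 1$ on $[-1,1]$ and $\operatorname{supp}\chi \subset [-2,2]$, and set $\psi(x,y) = \chi(x)\chi(y)$. Enumerating multi-indices $\vec k = (i,j) \in \N_0^2$, I would introduce candidate ``pieces''
\[ \phi_{\vec k}(x,y) = a_{i,j}\, x^i y^j\, \psi(\mu_{\vec k}\, x, \mu_{\vec k}\, y), \]
where the scales $\mu_{\vec k} \ge 1$ are to be chosen large. Each $\phi_{\vec k}$ is supported in $\{|x|,|y|\le 2/\mu_{\vec k}\}$, coincides with the monomial $a_{i,j}x^iy^j$ on a neighborhood of $\vec 0$, and hence vanishes to order $|\vec k| = i+j$ at $\vec 0$; moreover its only nonvanishing partial of total order $|\vec k|$ at $\vec 0$ is $\partial_x^i\partial_y^j\phi_{\vec k}(\vec 0) = a_{i,j}\,i!\,j!$. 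The candidate function is $\phi := \sum_{\vec k}\phi_{\vec k}$.

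The heart of the argument is the convergence estimate that dictates the choice of the $\mu_{\vec k}$. A Leibniz expansion of $\partial_x^p(x^i\chi(\mu x))$ together with the bound $|x|\le 2/\mu$ on the support gives $\norm{\partial_x^p(x^i\chi(\mu x))}_{C^0}\le C(\chi,p,i)\mu^{p-i}$, and likewise in the $y$-variable, so for every multi-index $\vec\ell = (p,q)$,
\[ \norm{\partial_x^p\partial_y^q \phi_{\vec k}}_{C^0} \le C_{\vec k,\vec\ell}\, |a_{i,j}|\, \mu_{\vec k}^{\,|\vec\ell| - |\vec k|}, \]
with $C_{\vec k, \vec \ell}$ depending only on $\chi$, $\vec k$, $\vec\ell$; when $|\vec \ell| < |\vec k|$ and $\mu_{\vec k}\ge 1$ this is at most $C_{\vec k,\vec\ell}|a_{i,j}|\mu_{\vec k}^{-1}$. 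Since for each $\vec k$ there are only finitely many $\vec\ell$ with $|\vec\ell| < |\vec k|$, I may pick $\mu_{\vec k}$ so large that $\norm{\partial^{\vec\ell}\phi_{\vec k}}_{C^0}\le 2^{-|\vec k|}$ for all such $\vec\ell$. Then for every $N$ and every $\vec\ell$ with $|\vec\ell| \le N$, the tail $\sum_{|\vec k| > N}\partial^{\vec\ell}\phi_{\vec k}$ is dominated by $\sum_{|\vec k|>N}2^{-|\vec k|}$ and thus converges uniformly; hence $\sum_{\vec k}\phi_{\vec k}$ converges in $C^N$ for every $N$, so $\phi \in C^\infty(\R^2)$ and partial differentiation commutes with the sum.

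Finally I would read off the Taylor coefficients. Fix $\vec\ell = (p,q)$. Terms with $|\vec k| > |\vec\ell|$ contribute $0$ to $\partial^{\vec\ell}\phi(\vec 0)$ because $\phi_{\vec k}$ vanishes to order $|\vec k|$ there; terms with $|\vec k| < |\vec\ell|$ contribute $0$ because near $\vec 0$ they equal $a_{i,j}x^iy^j$ with $i<p$ or $j<q$; and among the finitely many $\vec k$ with $|\vec k| = |\vec\ell|$ only $\vec k = (p,q)$ contributes, giving $\partial^{\vec\ell}\phi(\vec 0) = a_{p,q}\,p!\,q!$. Thus the Taylor coefficient of $x^py^q$ is $a_{p,q}$, as desired. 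The only genuinely technical point is the Leibniz/scaling estimate and the ensuing choice of the $\mu_{\vec k}$; everything else is bookkeeping. (This is precisely Proposition 6.6.3 of \cite{KH}, and I would present it in condensed form.)
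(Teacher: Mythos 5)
Your proposal is correct and is essentially the paper's own argument: both are the classical Borel cut-off-and-sum construction, multiplying each monomial $a_{i,j}x^iy^j$ by a bump function whose scale shrinks rapidly with the order so that the series converges in every $C^N$ while the jets at $\vec 0$ are unaffected. The only (cosmetic) difference is that you choose the scales $\mu_{\vec k}$ implicitly via the Leibniz estimate, whereas the paper prescribes them explicitly as $|i+j|!\,C_{i+j}$ inside a radial bump; your write-up actually supplies the convergence details the paper leaves terse.
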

\begin{proof}
 Consider $\phi(x,y)=\sum^{\infty}_{i=0,j=0}a_{i,j}x^iy^jb(|i+j|!C_{i+j}(x^2+y^2)), $
 where $b(t):\mathbb{R}\rightarrow \mathbb{R}$ is a bump function with value $1$ on $[-1,1]$ and $C_{N}=\sum_{l=0}^N\sum_{i+j=l}|a_{i,j}|$. Notice that this series converges since for each $x\neq 0$ there are only finitely many non-zero terms and the sum converges uniformly and very rapidly. Moreover, the derivatives of order $N$ around $0$ satisfy what we want. 
\end{proof}

Next lemma allows us to show that if we can solve a cocycle equation at the level of formal power series, then we can find an actual solution.

\begin{lemma}\label{global}(cf. Proposition 6.6.5 in \cite{KH})Let $F$ be as in Proposition \ref{nfc}, and 
$\phi$ be a local $C^{\infty}$ function such that $\phi^{(\vec k)}(\vec 0)=0$ for all $\vec k\in\mathbb N_0^2$. 
Then there is a $C^{\infty}$ function $v$ defined on a neighborhood of $\vec 0$ such that $v\circ F-v=\phi$. 
\end{lemma}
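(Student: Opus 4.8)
The plan is to follow the scheme of Proposition 6.6.5 in \cite{KH}: split $\phi$ according to the hyperbolic splitting at $\vec 0$, and solve each piece by a one-sided telescoping series, using the flatness of $\phi$ crucially for both convergence and smoothness.

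First I would reduce to compactly supported $\phi$. Since we only need $v\of F - v = \phi$ on \emph{some} neighborhood of $\vec 0$, I may multiply $\phi$ by a bump function equal to $1$ on a small disc $D^2_\delta$ and supported in a disc $D^2_{\ve_0}$. Here $\ve_0$ is chosen small enough --- using $F(x,y) = (\lambda x\omega(xy),\lambda^{-1}y\omega(xy)^{-1})$ with $\lambda > 1$ and $\omega(0) = 1$ --- that the cone $\mc C^s := \set{|x| \le 2|y|}$ is backward-invariant under $F$ on $D^2_{\ve_0}$ with $F$ contracting the $y$-coordinate there by a factor $<1$, and the cone $\mc C^u := \set{|y| \le 2|x|}$ is forward-invariant with $F$ expanding the $x$-coordinate there by a factor $>1$. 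I would then write $\phi = \phi_1 + \phi_2$ with $\phi_1 = \rho\phi$, $\phi_2 = (1-\rho)\phi$, where $\rho$ is a $0$-homogeneous smooth function on $\R^2\setminus\set{\vec 0}$ equal to $1$ on $\set{|y| \ge 2|x|}$ and to $0$ on $\set{|x| \ge 2|y|}$, so $\operatorname{supp}\phi_1 \subset \mc C^s$ and $\operatorname{supp}\phi_2 \subset \mc C^u$. Although $\rho$ and its derivatives blow up like $|\vec x|^{-|\vec k|}$ at $\vec 0$, the hypothesis $\phi^{(\vec k)}(\vec 0) = 0$ for all $\vec k$ and the Leibniz rule force $\phi_1, \phi_2 \in C^\infty(D^2_{\ve_0})$, still flat at $\vec 0$.

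Next I would set
\[ v_1 := -\sum_{n \ge 0}\phi_1 \of F^n, \qquad v_2 := \sum_{n \ge 1}\phi_2 \of F^{-n},\]
the $n$-th summand being declared $0$ unless the relevant orbit segment stays in $D^2_{\ve_0}$ (so that the iterate is defined). Reindexing each telescoping sum gives $v_i \of F - v_i = \phi_i$ wherever the iterates make sense, so $v := v_1 + v_2$ solves $v \of F - v = \phi$ on a smaller neighborhood of $\vec 0$ (shrunk so that $F$ maps it into $D^2_{\ve_0}$). The content is that $v_1, v_2 \in C^\infty$. For $v_1$: the $n$-th summand is nonzero only at $\vec x$ with $F^n(\vec x) \in \operatorname{supp}\phi_1 \subset \mc C^s$, and backward-invariance of $\mc C^s$ then forces the whole segment $\vec x, F\vec x, \dots, F^n\vec x$ into $\mc C^s$; hence if $\vec x$ lies in a sufficiently small fixed disc $D^2_{\ve'}$, this segment stays in $\mc C^s \cap D^2_{\ve_0}$, one gets $|F^n(\vec x)| \lesssim \mu^{-n}|\vec x|$ for some $\mu > 1$, and near the boundary of the domain $\Omega_n$ of $F^n$ the summand already vanishes, so its extension by zero is $C^\infty$ on $D^2_{\ve'}$. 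Then $|\partial^{\vec j}\phi_1(\vec q)| = o(|\vec q|^N)$ for every $N$ and $\vec j$ (flatness) together with $\norm{D^j F^n} \lesssim \Lambda^{jn}$ (at-most-exponential growth of the derivatives of the iterates) gives, via the chain rule, $\norm{\phi_1 \of F^n}_{C^k(D^2_{\ve'})} \lesssim \theta_k^n$ with $\theta_k < 1$ for each $k$; so the series converges in every $C^k$, whence $v_1 \in C^\infty$ (in fact flat at $\vec 0$). The function $v_2$ is handled by the mirror argument under $F \leftrightarrow F^{-1}$, $x \leftrightarrow y$, $\mc C^s \leftrightarrow \mc C^u$.

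The step I expect to be the main obstacle is exactly this last one, and within it the bookkeeping forced by $F$ being only a \emph{local} diffeomorphism: one must choose the radii $\ve_0 \gg \ve' \gg \delta$ and the cones in the right order so that (i) $\operatorname{supp}\phi$ sits where the cones are invariant with the claimed contraction/expansion rates, (ii) for $\vec x \in D^2_{\ve'}$ the relevant orbit segments never reach $\del D^2_{\ve_0}$ --- which is what makes each truncated summand genuinely smooth after extension by zero --- and (iii) the flatness of $\phi$ beats the exponential growth of $\norm{D^j F^{\pm n}}$ uniformly in $n$. The decomposition and the telescoping identities themselves are routine.
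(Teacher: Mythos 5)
Your proposal is correct and follows essentially the same route as the paper: the same $0$-homogeneous angular cutoff splitting $\phi$ into two pieces flat along the expanding/contracting axes, the same one-sided telescoping sums $-\sum_{n\ge 0}\phi_1\circ F^{n}$ and $\sum_{n\ge 1}\phi_2\circ F^{-n}$, and the same ``flatness at the origin/axis beats exponential growth of $\|D^jF^{\pm n}\|$'' argument for convergence in every $C^k$. The only real difference is bookkeeping: the paper avoids your domain-of-definition issues by extending $\phi$ and $\omega$ (hence $F$, keeping the skew form $F^{m}(x,y)=(\lambda^{m}x\,\omega(xy)^{m},\lambda^{-m}y\,\omega(xy)^{-m})$) to all of $\mathbb{R}^2$, whereas you keep $F$ local and control the orbit segments via cone invariance and truncation.
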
 
\begin{proof}
First, we take arbitrary extensions of $\phi$ to $\mathbb R^2$ and $\omega$ to $\mathbb R$ which extends $F$ to $\mathbb R^2$. Below, we work with the extensions that we still denote by $\phi, \omega$, and $F$.

Then, we decompose $\phi$ as 
$\phi=\phi^{+}+\phi^{-}$
such that $\phi^{+}$ and all its jets vanish along the $x$-axis and $\phi^{-}$ and all its jets vanishes along $y$-axis.  To construct this decomposition, we can take a smooth function $\rho$ on the unit circle $S^1$ such that $\rho\equiv 0$ on the intersection of $S^1$ and the horizontal cone $C_H:=\{(x,y) : |y|\leq\frac{1}{2}|x|\}$ and $\rho\equiv1$ on the intersection of $S^1$ and the vertical cone $C_V:=\{(x,y) : |x|\leq\frac{1}{2}|y|\}$. We set
$$\phi^{+}(x,y)=\phi(x,y)\rho\left(\frac{x}{\sqrt{x^2+y^2}},\frac{y}{\sqrt{x^2+y^2}}\right), \text{ for }x\neq 0$$
and $\phi^{+}(0)=0$. Moreover, we define $\phi^{-}=\phi-\phi^{+}$. We note that the constructed functions are $C^\infty$ at $\vec 0$ and have the desired properties. Indeed, if $(x,y)\neq \vec 0$, then $(\phi^+)^{(\vec k)}(x,y)$ for $\vec k\in\mathbb N_0^2$ is a polynomial in the derivatives of $\phi$, $\rho$, and $(x^2+y^2)^{-1/2}$, and each monomial contains $\phi$ or some of its derivative. Moreover, $\|\phi^{(\vec k)}(x,y)\|=o((x^2+y^2)^{m/2})$ for all $\vec k\in\mathbb N_0^2$ and $m\in\mathbb N$  as $\phi^{(\vec k)}(\vec 0)=0$ for all $\vec k\in\mathbb N_0^2$, and the derivatives of $\rho$ are bounded. As a result, $\|(\phi^+)^{(\vec k)}(x,y)\|=o((x^2+y^2)^{m/2})$ for all $\vec k\in\mathbb N_0^2$ and $m\in\mathbb N$ and hence $\phi^+$ is a $C^\infty$ function on a neighborhood of $\vec 0$ and so is $\phi^-$. 

We want to find two $C^\infty$ functions $v^{+}$ and $v^{-}$ such that 
\begin{equation}\label{eq:cocycle split}
v^{+}\circ F-v^{+}=\phi^{+} \text{ and }v^{-}\circ F-v^{-}=\phi^{-}.
\end{equation}

Then, $v=v^++v^-$ is a solution of $v\circ F-c=\phi$.

We note that $v^+=-\sum\limits_{ m=0}^\infty \phi^+\circ F^{m}$ and $v^-=\sum\limits_{ m=0}^\infty \phi^-\circ F^{-m-1}$ are formal solutions of \eqref{eq:cocycle split}. Moreover, if we show that the series converge in the $C^\infty$ topology then it would imply that we constructed desired solutions. 

We have $F^m(x,y)=(\lambda^mx(\omega(xy))^m,\lambda^{-m}y(\omega(xy))^{-m})$ for $m\in\mathbb Z$. Also, $(\phi^+\circ F^{m})^{(\vec k)}(x,y)$ is a polynomial in the derivatives of $\phi^+$ and $F^m$ of order up to $|\vec k|$, and each term contains a derivative of $\phi^+$ or $\phi^+$ itself, evaluated at $F^m(x,y)$ which exponentially converges to the $x$-axis as $m\rightarrow +\infty$. Thus, $(\phi^+\circ F^{m})^{(\vec k)}(x,y)$ superexponentially converges to $0$ as $m\rightarrow +\infty$ by the construction of $\phi^+$. Similarly, since  $F^{-m-1}(x,y)$ exponentially converges to the $y$-axis as $m\rightarrow +\infty$, $(\phi^-\circ F^{-m-1})^{(\vec k)}(x,y)$ superexponentially converges to $0$ as $m\rightarrow +\infty$ by the construction of $\phi^-$. Thus, $v^+$ and $v^-$ are $C^\infty$ functions, and we proved the lemma. 
\end{proof}

Now we are ready to finish the proof of Proposition \ref{nfc}.
\begin{proof}[Proof of Proposition \ref{nfc}]
Fix the local cocycle $\phi$ with Taylor expansion $\phi \approx \sum_{k,\ell=0}^\infty \phi_{k,\ell}x^ky^\ell$, and pick any $C^\infty$ function $\bar \phi$ such that the Taylor expansion of $\bar\phi$ is given by $\bar\phi(z) \approx \phi_{k,k}z^k$ (see Lemma~\ref{Taylor}). Then let $\phi_0 = \phi - \tilde{\phi}$, where $\tilde{\phi}(x,y) = \bar\phi(xy)$. By construction, $\phi_0$ has Taylor expansion $\displaystyle\phi_0 \approx \sum_{\substack{k,\ell=0\\k\not=\ell}}^\infty\phi_{k,\ell}x^ky^\ell$.

By Lemma \ref{lem:formal-sol}, there exists a formal power series $w$ such that $w \of F - w \approx \phi_0$. Moreover, by Lemma \ref{Taylor}, there exists a smooth function $\tilde{w}(x,y)$ with $w$ as its Taylor series. It follows that $\phi_0 - (\tilde w \of F - \tilde w)$ is $C^\infty$ tangent to $0$ at $\vec 0$, and is thus a coboundary by Lemma \ref{global}. In particular, $\phi_0 - (\tilde w \of F - \tilde w) = v \of F - v$ for some $C^\infty$ function $v$. Thus, $\phi_0$ is a coboundary with transfer function $v + \tilde w$, and $\phi$ is cohomologous to $\tilde{\phi}$, as claimed. 
\end{proof}

Finally, we show that the coefficients of the formal power series $\bar\phi(z)\approx \sum_{k=0}^\infty \bar\phi_kz^k$ from Proposition~\ref{nfc} are invariants of cohomology.

\begin{lemma}
\label{lem:cocycle-coefficeints}
    If $\phi \approx \sum_{k,\ell =0}^\infty \phi_{k,\ell} x^ky^\ell$ and $\phi' \approx \sum_{k,\ell =0}^\infty \phi_{k,\ell}'x^ky^\ell$ are cohomologous local cocycles over $F$ as in Proposition \ref{nfc}, then $\phi_{k,k} = \phi_{k,k}'$ for all $k \ge 0$.
\end{lemma}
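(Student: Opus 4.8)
The plan is to reduce the statement to a structural fact about coboundaries over $F$ and then apply it to $\phi - \phi'$. Since $\phi$ and $\phi'$ are cohomologous, there is a transfer function $u$ (which we may take $C^\infty$) with $\phi - \phi' = u \circ F - u$ near $\vec 0$. Passing to $\infty$-jets at $\vec 0$ — an operation compatible with composition and subtraction, using $F(\vec 0) = \vec 0$ — turns this into the identity of formal power series $\psi \approx \hat u \circ F - \hat u$, where $\psi := \phi - \phi'$ and $\hat u$ is the Taylor series of $u$. So it suffices to prove: \emph{if a formal power series $\psi$ has the form $\hat u \circ F - \hat u$, then the coefficient of $x^k y^k$ in $\psi$ vanishes for every $k$.}

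The key feature is that $F$ preserves the product $xy$: writing $F(x,y) = (\lambda x\omega(xy), \lambda^{-1}y\omega(xy)^{-1})$, the two components multiply to $xy$. Consequently $F$ is homogeneous for the ``charge'' grading that assigns to the monomial $x^k y^\ell$ the integer $k - \ell$. Indeed (as in the formal-power-series form of Lemma~\ref{lem:formal-sol}) the first component of $F$ is a formal series all of whose monomials are scalar multiples of $x^{i+1}y^i$, i.e.\ have charge $+1$, and the second component has all monomials of charge $-1$; hence $F_1^k F_2^\ell$ is a sum of monomials of charge $k - \ell$. Writing $\hat u = \sum_{d \in \Z} u^{(d)}$ for the decomposition into charge-homogeneous parts, we get $\hat u \circ F = \sum_d (u^{(d)} \circ F)$ with $u^{(d)} \circ F$ again charge-homogeneous of charge $d$. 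In particular the charge-$0$ part of $\hat u \circ F - \hat u$ equals $u^{(0)} \circ F - u^{(0)}$.

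Now $u^{(0)}$ is by definition the ``diagonal'' part of $\hat u$, so it is a formal power series in $xy$: $u^{(0)}(x,y) = \bar u(xy)$ for some $\bar u$. Since $F$ preserves $xy$, $u^{(0)} \circ F = \bar u(xy) = u^{(0)}$, and therefore the charge-$0$ part of $\hat u \circ F - \hat u$ is identically zero. But the coefficients of $x^k y^k$ in $\psi$ are precisely the coefficients of the charge-$0$ part of $\psi$, so $\psi_{k,k} = 0$ for all $k$. Applying this with $\psi = \phi - \phi'$ gives $\phi_{k,k} = \phi'_{k,k}$ for all $k \ge 0$, which is the claim.

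I do not anticipate a genuine obstacle here: the only care needed is in the two bookkeeping points — that Taylor expansion converts the functional equation into the corresponding identity of formal power series, and that $F$ respects the charge grading, which is immediate from its explicit form. If one preferred to avoid the grading language, the same conclusion can be extracted directly from \eqref{eq:coboundary}: the coefficient of $x^k y^k$ in $\hat u \circ F - \hat u$ is a polynomial in the $u_{i,j}$ (with $i,j \le k$) and the $a_m, b_n$ in which $u_{k,k}$ appears with coefficient $\lambda^{k-k} - 1 = 0$, and one checks this polynomial is in fact the zero polynomial by testing it on the $F$-invariant series $\hat u(x,y) = (xy)^k$. The grading argument is cleaner, so that is the route I would take.
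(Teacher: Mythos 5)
Your proposal is correct and is essentially the paper's argument: both reduce to showing a coboundary $u\circ F-u$ has no diagonal terms, and both exploit that $F$ preserves $xy$ (equivalently, respects the grading by $k-\ell$), so the $(i,j)$-summand $u_{i,j}x^iy^j((\lambda\omega(xy))^{i-j}-1)$ can only contribute to $x^ky^k$ when $i=j$, where the factor vanishes. Your charge-grading language is just a cleaner repackaging of the paper's direct coefficient computation from \eqref{eq:coboundary}, which you also note as the alternative route.
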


\begin{proof}
    It suffices to show that any coboundary has vanishing diagonal terms. We verify this by rewriting \eqref{eq:coboundary} with additional information from $F$. Indeed, recall that we may instead write $F$ as
    \[F(x,y) \approx \left(\lambda x \omega(xy),\lambda^{-1}y(\omega(xy))^{-1}\right).\]

    In this case, $u \of F - u \approx \sum_{i,j=1}^\infty u_{i,j}x^iy^j((\lambda\omega(xy))^{i-j}-1)$. We may then replace $\omega(xy)^{i-j}$ with a power series depending only on $xy$. In particular, since the power series in $xy$ is multiplied by $x^iy^j$ the only candidate for a term of the form $x^ky^k$ must come from a summand with $i = j$. But if $i=j$, $(\lambda\omega(xy))^{i-j} - 1 = 0$. It follows that any coboundary has no diagonal terms in its formal Taylor approximation. 
\end{proof}

\subsection{Coordinates in a neighborhood of a hyperbolic periodic orbit}\label{sec: coordinates}
In this section, we complete the proof of Theorem \ref{introthm: vector field}. 
Let $p$ be a hyperbolic periodic point of a smooth flow $\Phi=(\varphi^t)_{t\in\mathbb R}\colon M\rightarrow M$ and $\Sigma\ni p$ be a Poincar\'e section with local coordinates $h_\Sigma\colon D^2_\ve \to \Sigma$ such that the Poincar\'e map has the form
$F(x,y)=(e^{g(xy)}x,e^{-g(xy)}y)$ for some $C^\infty$ function $g$ on a neighborhood of $0$ such that $g(0)>0$ (see Theorem \ref{thm:sternberg}). By Proposition \ref{nfc}, the return time $r$ to the section $\Sigma$ is cohomologous to a function $\bar r(xy)$ on $D_\varepsilon^2$, i.e., there exists $u\in C^\infty(D_\varepsilon^2)$ such that $u(\vec 0) = 0$ and
\begin{equation*}
r-\bar r= u\circ F-u.
\end{equation*}

 Consider a section $\Sigma_u = \{\varphi^{-u(x,y)}(h_\Sigma(x,y))\, : \, (x,y)\in D^2_\varepsilon\}$, and note that $\tilde{\Sigma}$ still passes through $p$ since $u(\vec 0) = 0$. By construction, the return time to $\tilde{\Sigma}$ is $\bar{r}$.

Let $Y_0 = x \frac{\del}{\del x}- y \frac{\del}{\del y}$. We, henceforth, assume that we have chosen our section $\Sigma$ to be the image of a function $h_\Sigma\colon D^2_\ve \to M$ and have the following properties: 
\begin{enumerate}
    \item In the coordinates provided by $h_\Sigma$, the first return map $F$ is the time-1 map of a flow $\psi^t\colon D^2_\ve\rightarrow \mathbb R^2$ induced by a vector field $Y = g(xy)Y_0$;
    \item $g$ is a $C^\infty$ function on a neighborhood of $0$;
    \item the return time function $r\colon\Sigma\rightarrow\mathbb R_+$ in the local coordinates given by $h_\Sigma$ is a $C^\infty$ function of $xy$. 
\end{enumerate}

We now define the function $H\colon \mathbb R/\mathbb Z\times D^2_{\ve'}\rightarrow M$ for some $\ve'>0$. 
Let $\ve'>0$ be such that if $(x,y) \in D^2_{\ve'}$, $\psi^{t}(x,y) \in D^2_\ve$ for $|t|\leq 1$. Then, we define
\[ H(t,x,y) = \varphi^{t\bar{r}(xy)}\left(h_{\Sigma}(\psi^{-t}(x,y))\right). \]
We claim that $H$ is well-defined, i.e., $H(t + 1,x,y) = H(t,x,y)$ whenever both points are defined. Indeed, notice that
\begin{equation}\label{eq: periodic H}
 H(t+1,x,y) = \varphi^{(t+1)\bar{r}(xy)}h_{\Sigma}(\psi^{-t-1}(x,y)) = \varphi^{t\bar{r}(xy)} \varphi^{\bar{r}(xy)} h_{\Sigma}(\psi^{-1}\psi^{-t}(x,y)) = H(t,x,y),
\end{equation}
since by definition, $\psi^{-1} = F^{-1}$ is the first return map for $\Phi$, $\bar{r}$ is the roof function, and $\psi^t$ preserves hyperbolas $xy=\mathrm{const}$ (see \eqref{eq: flow in section}). Let $f(xy) = \frac{1}{\bar{r}(xy)}$. Finally, we verify that the vector field of $\Phi$ in the coordinates given by $H$ is
 \[  \tilde{X} = f(xy)\left( \frac{\partial}{\partial t} + xg(xy) \frac{\partial}{\partial x} - yg(xy) \frac{\partial}{\partial y}\right).\]

The integral curves $\gamma_{s,x,y}(t)$ of $\tilde X$ are given by \eqref{eq: flow in coord}, and the orbits of $\psi^t$ are given by \eqref{eq: flow in section}.
Furthermore, note that
\begin{eqnarray*} H(\gamma_{s,x,y}(t)) & = & H(s+f(xy)t,e^{tf(xy)g(xy)}x,e^{-tf(xy)g(xy)}y)\\
 &= & \varphi^{(s+f(xy)t)\bar{r}(xy)}h_{\Sigma}(e^{-(s+f(xy)t)g(xy)}e^{tf(xy)g(xy)}x,e^{(s+f(xy)t)g(xy)}e^{-tf(xy)g(xy)}y)
 \\
 &= & \varphi^{t+s\bar{r}(xy)}h_{\Sigma}(e^{-sg(xy)}x,e^{sg(xy)}y) 
 \\
 & = & \varphi^t(H(s,x,y))
\end{eqnarray*}
Thus, $H$ intertwines the flows generated by $\tilde{X}$ and $X$ on $\mathbb R/\mathbb Z\times D^2_{\ve'}$ and $M$, respectively.

\section{Normal form for a contact form in a neighborhood\\ of a hyperbolic periodic orbit of the Reeb flow}\label{sec:normal form for contact}
In this section, we present the proof of Theorem \ref{thm:contact-normal}. Unlike the approach in Section \ref{sec: normal forms for vector fields}, where we worked primarily with the vector field, here we work directly with the contact form. 
As a byproduct, improved expressions for the associated contact vector fields around the tubular neighborhood of periodic orbits are obtained for Anosov Reeb flows on manifolds of dimension $3$.  

\subsection{Choice of a section}
\label{sec:contact-setup}
Consider a contact form $\alpha$, and denote by $\Phi = (\varphi^t)_{t \in \R}$ the associated Reeb flow. Let $\mathcal O$ be a hyperbolic periodic orbit of $\Phi$, and let $p\in \mathcal O$. In this section, we prove the following lemma.

\begin{lemma}\label{lemma: nice section}
   There exists a section $\Sigma$  transverse to $\Phi$  at $p$
with local coordinates $h\colon D^2_\ve \to \Sigma$ for some $\ve>0$ such that 
\begin{enumerate}
    \item $h^*\alpha = \frac{1}{2}(x\, dy - y \, dx)$
    \item $h^*d\alpha = dx \wedge dy$
    \item the Poincar\'e map has the form
$F(x,y)=(e^{\eta(xy)}x,e^{-\eta(xy)}y)$ for some $C^\infty$ function $\eta$ on a neighborhood of $0$ such that $\eta(0)>0$. In particular, $F$ is the time-1 map of a flow $\psi^t$ induced by a vector field $Y = \eta(xy)Y_0$.
\end{enumerate}
\end{lemma}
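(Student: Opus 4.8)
The plan is to start from an arbitrary Poincaré section $\Sigma_0$ through $p$ that is transverse to $\Phi$, and then fix it up in two stages: first to normalize the induced area form $d\alpha|_{\Sigma_0}$, and then to normalize the Poincaré map itself.

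First I would observe that $i^*\alpha$, where $i\colon \Sigma_0 \hookrightarrow M$ is the inclusion, is a primitive of $i^*d\alpha$, and that $i^*d\alpha$ is a nowhere-vanishing $2$-form on $\Sigma_0$ (here is where we use that $X_\alpha$ is transverse to $\Sigma_0$ and that $\alpha \wedge d\alpha$ is a volume form — the kernel of $d\alpha$ is $\R X_\alpha$, which is not tangent to $\Sigma_0$). So $i^*d\alpha$ is an area form on the disk $\Sigma_0$. By Moser's theorem (the relative version, fixing $p$), there is a diffeomorphism from a neighborhood of $\vec 0$ in $\R^2$ to a neighborhood of $p$ in $\Sigma_0$ pulling $i^*d\alpha$ back to the standard form $dx \wedge dy$; composing, we get a chart $h_0\colon D^2_\ve \to \Sigma$ with $h_0^*d\alpha = dx\wedge dy$, which is (2). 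For (1): $h_0^*\alpha$ is then a primitive of $dx\wedge dy$, so $h_0^*\alpha - \tfrac12(x\,dy - y\,dx)$ is closed, hence exact on the disk, equal to $dG$ for some $G \in C^\infty(D^2_\ve)$; replacing $h_0$ by its composition with the time-$(-G)$-type adjustment along the Reeb flow — more precisely, moving each point $q$ of $\Sigma$ to $\varphi^{-\beta(q)}(q)$ for a suitable function $\beta$ — changes $h^*\alpha$ by adding a multiple of $dt$-type term which, restricted back to the new section, modifies the primitive by the differential of a function; I would choose $\beta$ so that this function is exactly $G$. (Equivalently: two sections through $p$ with the same $d\alpha$-induced area form and tangent space can be connected by pushing along $\Phi$, and the primitives of $d\alpha$ differ by the flux, which one computes to be $d(\text{the time function})$.) The upshot is a section and chart $h$ with $h^*\alpha = \tfrac12(x\,dy - y\,dx)$ and hence $h^*d\alpha = dx\wedge dy$, giving (1) and (2).

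Next, for (3): on this section the Poincaré map $F\colon D^2_\ve \to \R^2$ preserves $dx\wedge dy$ (since the Reeb flow preserves $d\alpha$ and the return map of a flow preserving a volume form $\alpha\wedge d\alpha$, restricted to a transverse section, preserves the induced area form — this is the standard fact recalled in Section~\ref{sec: definitions}), and it has a hyperbolic fixed point at $\vec 0$. By the Birkhoff–Sternberg theorem (Theorem~\ref{thm:sternberg}), there is an area-preserving change of coordinates putting $F$ in resonance form $(x,y)\mapsto(\lambda x\omega(xy), \lambda^{-1}y\omega(xy)^{-1})$. Writing $\eta(z) = \log(\lambda\omega(z))$, so that $\eta(0) = \log\lambda > 0$, this is exactly $F(x,y) = (e^{\eta(xy)}x, e^{-\eta(xy)}y)$, and since this is the time-$1$ map of the explicitly-solvable flow of $Y = \eta(xy)Y_0$ with $Y_0 = x\,\del_x - y\,\del_y$ (see \eqref{eq: flow in section}), the last assertion follows.

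The subtlety — the step I expect to need the most care — is that the Birkhoff–Sternberg change of coordinates in the last paragraph must be reconciled with the normalization $h^*\alpha = \tfrac12(x\,dy - y\,dx)$ achieved in the first two steps: Sternberg's coordinate change is only asserted to be area-preserving (i.e. to preserve $dx\wedge dy$), not to preserve the specific primitive $\tfrac12(x\,dy-y\,dx)$. So after applying it we again have $h^*d\alpha = dx\wedge dy$ but possibly $h^*\alpha = \tfrac12(x\,dy-y\,dx) + dG$ for a new function $G$. I would resolve this by running the primitive-correction argument of step one once more — and crucially checking that the correction (another push along the Reeb flow by a function of $(x,y)$, which in the explicit flow coordinates amounts to composing with a shear in the $t$-direction) does not disturb the resonance form of $F$. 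Since such a correction changes the section but not the first-return dynamics up to the action of the flow, and the flow in these coordinates moves along hyperbolas $xy = \mathrm{const}$, one checks the return map is unchanged; alternatively, one verifies that the function $G$ itself can be taken to be a function of $xy$ by an application of Proposition~\ref{nfc} (or its contact analogue), so that the correction commutes with $Y$. Assembling these, the section and chart satisfy all three properties simultaneously.
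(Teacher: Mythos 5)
Your proposal is correct and takes essentially the same route as the paper: put the return map in Birkhoff--Sternberg resonance form in coordinates where $d\alpha$ pulls back to $dx\wedge dy$, then push the section along the Reeb flow by a time function, which (by invariance of $\alpha$ under the flow and $\alpha(X)\equiv 1$) changes $h^*\alpha$ exactly by the differential of that time function — exact on the disk — while leaving both $h^*d\alpha$ and the coordinate expression of the Poincar\'e map unchanged. The only difference is ordering: the paper applies Sternberg first and then corrects the primitive once, whereas you normalize the area form and primitive first and hence need the (harmless) second correction after Sternberg; this is cosmetic, and your flagged subtlety is resolved by exactly the observation the paper uses implicitly.
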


\begin{proof}
Choose a section $\Sigma'$ transverse to $\Phi$  at $p$
with local coordinates $h'\colon D^2_\ve \to \Sigma$ for some $\ve>0$ such that the Poincar\'e map has the form
$F'(x,y)=(e^{\eta(xy)}x,e^{-\eta(xy)}y)$ for some $C^\infty$ function $\eta$ on a neighborhood of $0$ such that $\eta(0)>0$ (see Theorem \ref{thm:sternberg}). Since $\Phi$ preserves the contact volume, $F'$ preserves the area form $(h')^*d\alpha=dx\wedge dy$ restricted to $\Sigma$.

Now we modify the section $\Sigma'$. For any $C^\infty$ function $\tau\colon D^2_\ve \to \R$, we define  a $C^\infty$ function $h_\tau\colon D^2_\ve \to X$ given by $h_\tau(x,y) = \varphi^{\tau(x,y)}(h'(x,y)).$ Then, to each such $\tau$ we can associate a section $\Sigma_\tau$ built from our initial choice $\Sigma$ in the following
\begin{equation}\label{eq: sigma_tau}
 \Sigma_\tau = \set{ h_\tau(x,y) : (x,y) \in D^2_\ve}.
\end{equation}

\begin{lemma}
\label{lem:canonnical-section}
    There exists a $C^\infty$ function $\tau$ such that $h_\tau^*\alpha = \frac{1}{2}(x\, dy - y \, dx)$. 
\end{lemma}
\begin{proof}
    Given a vector $V$ on $D^2_\ve$, let $\gamma\colon (-\ve,\ve) \to D^2_\ve$ be a curve such that $\gamma'(0) = V$. Then,
    \[ dh_\tau(V) = \left.\frac{d}{dt}\right|_{t=0} \varphi^{\tau(\gamma(t))}(h'(\gamma(t)).\]
  Differentiating in $t$ yields $d\tau(V) \cdot X(h_\tau(\gamma(0)) + d\varphi^{\tau(\gamma(0))}(dh'(V))$. By using invariance of $\alpha$ under the contact flow, we can evaluate $\alpha$ on this vector field to obtain 
    \[ h_\tau^*\alpha = d\tau + (h')^*\alpha. \]
 Note that since $\ker \alpha$ is not integrable, $\alpha$ cannot vanish identically on $\Sigma$. Given our initial section map $h'$, we may write $(h')^*\alpha = (a-\frac{1}{2}y) \, dx + (b+\frac{1}{2}x) \, dy$ for some $C^\infty$ functions $a$ and $b$ on $D^2_\ve$. Note that since $d(h')^*\alpha=(h')^*d\alpha =dx \wedge dy$, we get that $\partial_xb = \partial_ya$. Therefore, the form $a \, dx + b \, dy$ is closed. Since the first de Rham cohomology group $H^1_{dR}(D^2_\ve) = 0$, there exists a $C^\infty$ function $\tau$ such that $d\tau = -a \, dx - b\, dy$. Such a $\tau$ is a desired solution.
\end{proof}

By shrinking $\ve$ as necessary, $h_\tau$ will be a diffeomorphism onto its image. We choose $\Sigma$ as the image of a function $h_\tau$ where $\tau$ from Lemma~\ref{lem:canonnical-section}.
\end{proof}

\subsection{Properties of the return time}
\label{sec:contact-cocycle}
Let $\Sigma$ and $h$ be as in Lemma~\ref{lemma: nice section}, and let $r\colon D^2_\ve \to \R_+$ be the associated return time for the flow $\Phi$ (see Section~\ref{sec: sections}) written in the coordinates given by $h$. 
\begin{lemma}
\label{lem:dr-formula}
    $dr = -xy \,d\eta$ where $\eta$ as in Lemma~\ref{lemma: nice section}. 
    
    In particular, $r$ is a $C^\infty$ function of $xy$ given by
 \[r(x,y) - c =  \int_0^{xy} \eta(s) \, ds -  xy\, \eta(xy).\]
\end{lemma}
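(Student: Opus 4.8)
The plan is to compute $h^*\alpha$ directly using the flow structure and the established properties of the section $\Sigma$, exploiting that the Reeb flow preserves $\alpha$. First, I would extend the construction of Lemma~\ref{lem:canonnical-section}: for the section map $h = h_\tau$ and the return time $r$, one has the tautological relation $\varphi^{r(x,y)}(h(x,y)) = h(F(x,y))$, where $F(x,y) = (e^{\eta(xy)}x, e^{-\eta(xy)}y)$ is the Poincar\'e map. Pulling back $\alpha$ through this identity and using flow-invariance of $\alpha$ (so that $(\varphi^{r})^*\alpha = \alpha$ along $\Sigma$, with the extra term coming from the variation of the time $r$), I would obtain, exactly as in the computation $h_\tau^*\alpha = d\tau + (h')^*\alpha$, a formula of the shape
\[ F^*(h^*\alpha) = h^*\alpha + dr \]
on $D^2_\ve$ (the $X$-component contributing $\alpha(X)\,dr = dr$ since $\alpha(X)=1$).

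\textbf{Key steps.} Now substitute $h^*\alpha = \frac{1}{2}(x\,dy - y\,dx)$ from Lemma~\ref{lemma: nice section}(1). The left side is $F^*\left(\frac{1}{2}(x\,dy - y\,dx)\right)$, which I would compute explicitly: writing $u = e^{\eta(xy)}x$, $v = e^{-\eta(xy)}y$, a direct calculation gives $\frac{1}{2}(u\,dv - v\,du) = \frac{1}{2}(x\,dy - y\,dx) - xy\,d\eta(xy)$ — the cross terms involving $d\eta$ combine because $uv = xy$ is $F$-invariant. Hence $F^*(h^*\alpha) - h^*\alpha = -xy\,d\eta$, and comparing with the relation above yields $dr = -xy\,d\eta(xy)$, which is the first assertion. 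Since $d\eta = \eta'(xy)(y\,dx + x\,dy)$, we get $dr = -xy\,\eta'(xy)(y\,dx + x\,dy)$, which is manifestly a function of the single variable $z = xy$: indeed $dr = \bar r'(z)\,dz$ with $\bar r'(z) = -z\eta'(z)$. Integrating from $0$ (using $r(\vec 0) = c$, a constant, since $r$ is continuous and $D^2_\ve$ connected) gives $r(x,y) - c = \int_0^{xy}(-s\eta'(s))\,ds$, and integrating by parts, $\int_0^{xy}(-s\eta'(s))\,ds = -xy\,\eta(xy) + \int_0^{xy}\eta(s)\,ds$, which is the displayed formula.

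\textbf{Main obstacle.} The routine parts are the two pullback computations; the one point requiring care is the derivation of $F^*(h^*\alpha) = h^*\alpha + dr$. One must justify differentiating $\varphi^{r(x,y)}(h(x,y))$ correctly: as in Lemma~\ref{lem:canonnical-section}, differentiating along a curve $\gamma(t)$ in $D^2_\ve$ produces $dr(V)\cdot X + d\varphi^{r}(dh(V))$ at the point $h(F(\gamma(0)))$, and evaluating $\alpha$ on this — using $\alpha(X) = 1$ and $(\varphi^r)^*\alpha = \alpha$ (flow-invariance of the Reeb field's contact form) — collapses the second term to $(h^*\alpha)(V)$ pushed through $F$, i.e. $(F^*h^*\alpha)(V)$. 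I would state this as: pulling back $\alpha$ via $q \mapsto \varphi^{r(q)}(q)$ adds exactly $dr$. Everything else is algebraic manipulation with the explicit $F$, and the connectedness of $D^2_\ve$ handles the integration constant.
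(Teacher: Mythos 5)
Your proposal is correct, and it reaches the paper's pivotal identity $dr = F^*\beta - \beta$ (with $\beta = h^*\alpha = \frac{1}{2}(x\,dy-y\,dx)$) by different means at both stages. For the identity itself, the paper integrates $\alpha$ over the flow rectangle swept between $\gamma_s$ and $F\circ\gamma_s$ and applies Stokes' theorem, using only $\alpha(X)\equiv 1$ and $\iota_X d\alpha\equiv 0$; you instead differentiate the defining relation $\varphi^{r(q)}(h(q)) = h(F(q))$ and use the full Reeb invariance $(\varphi^t)^*\alpha=\alpha$ together with $\alpha(X)=1$ --- logically equivalent input, but a pointwise chain-rule argument rather than an integral one. (One sentence in your ``main obstacle'' paragraph misattributes which side of the identity becomes $(F^*h^*\alpha)(V)$ versus $(h^*\alpha)(V)$, but the displayed conclusion $F^*(h^*\alpha)=h^*\alpha+dr$ is the right one, so this is only an expository slip.) For the evaluation of $F^*\beta-\beta$, the paper writes $F=\psi^1$ for the flow of $Y=\eta(xy)Y_0$, computes $\mathcal L_Y\beta=-xy\,d\eta$, and uses $\psi^s$-invariance of $-xy\,d\eta$; you simply substitute the explicit resonance form $F(x,y)=(e^{\eta(xy)}x,e^{-\eta(xy)}y)$ from Lemma~\ref{lemma: nice section}(3) and observe the $d\eta$ cross-terms, which is shorter and perfectly valid here since the explicit form is available. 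The trade-off is that the paper's Lie-derivative computation only uses that $F$ is the time-one map of $\eta(xy)Y_0$ and is the template reused later (e.g.\ in Section~\ref{sec:contact-build} via \eqref{eq:eta-Y-inv}), whereas your direct computation is more elementary but tied to the explicit formula for $F$. Your final step (connectedness of $D^2_\ve$ to fix the constant, then integration by parts) matches the paper.
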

\begin{proof}
We compute $dr$ by choosing an arbitrary curve $\gamma\colon [-\ve,\ve] \to D^2_\ve$, and evaluating $dr(\gamma'(0))=(r \of \gamma)'(0)$. We will show that $(r \of \gamma)'(0)=-\gamma_1(0)\gamma_2(0)(\eta\of \gamma)'(0)$, where $\gamma_i$ is the $i$th component of $\gamma$.

Let $\gamma$ be such a curve, and consider the surface with boundary
\[ S = \set{ \varphi^t(h(\gamma(s))) : s \in [-\ve,\ve],t \in [0,r(\gamma(s))]}\]
Note that $S$ is a rectangle, and is everywhere tangent to the Reeb vector field $X$. Since $\iota_Xd\alpha \equiv 0$, we conclude that $\alpha$ is closed when restricted to $S$.

For each $s \in [-\ve,\ve]$, let $\nu_s$ denote the orbit segment of $\Phi$ in $S$ starting from $h(\gamma(s))$ and ending at $\varphi^{r(h(\gamma(s)))}(h(\gamma(s))) = h(F(\gamma(s)))$. Then $r(\gamma(s)) = \int_{\nu_s} \alpha$, since $\alpha(X) \equiv 1$, and one may use the Reeb flow to parameterize $\nu_s$.

Similarly, for each such $s$, let $\gamma_s$ denote the restriction of the curve $\gamma$ to the domain $[0,s]$. Then, Stoke's theorem implies that
\begin{equation}
\label{eq:r-derivative}
r(\gamma(s)) - r(\gamma(0)) = \int_{\nu_s} \alpha - \int_{\nu_0} \alpha = \int_{F \of \gamma_s} \alpha - \int_{\gamma_s} \alpha.
\end{equation}
Let $\beta = h^*\alpha = \frac{1}{2}(x \, dy - y \, dx)$ for simplicity of notation. Dividing \eqref{eq:r-derivative} by $s$ and taking the limit as $s \to 0$ yields that $dr = F^*\beta - \beta$. Now, since $F$ is the time-1 map of $\psi^t$, it follows that
\begin{equation}\label{eq:Tstar-beta} F^*\beta = (\psi^1)^*\beta = \beta + \int_0^1 \left.\dfrac{d}{dt}\right|_{t=s}(\psi^t)^*\beta \, ds= \beta +\int_0^1 (\psi^s)^*\mc L_Y\beta \, ds
\end{equation}
Observe that $\beta$ is invariant under the flow generated by $Y_0$, and $\beta(Y_0) \equiv -xy$, so that
\[ \mc L_Y\beta = \mc L_{\eta Y_0}\beta = \eta \, \mc L_{Y_0}\beta + d\eta \wedge \iota_{Y_0}\beta = -xy \, d\eta \]
Furthermore, 
\begin{equation}\label{eq:eta-Y-inv} \mc L_Y(-xy\, d\eta) = \mc L_Y(-xy) d \eta - xy \mc L_Yd\eta = 0 - xy\, d\mc L_Y\eta = 0
\end{equation}
This follows since the derivative of 
a function of $xy$ along the vector field $Y$ is $0$ as the integral curves of $Y$ are the hyperbolas $xy = \mathrm{const}$. Therefore, the form $-xy\, d\eta$ is invariant under the flow $\psi^s$, and $(\psi^s)^*\mc L_Y\beta = -xy\, d\eta$ for all $s$. Then, \eqref{eq:Tstar-beta} becomes 
\[ dr = F^*\beta - \beta = (\beta - xy \, d\eta) - \beta = -xy \, d\eta.\]
The integral formula for $r$ follows immediately from integration by parts.
\end{proof}

\subsection{Coordinates in a neighborhood of a hyperbolic periodic orbit of a Reeb flow}
\label{sec:contact-build}

In this section, we finish the proof of Theorem~\ref{thm:contact-normal}.

By Lemma~\ref{lem:dr-formula}, there exists a unique $C^\infty$ function $\bar{r} : \R \to \R$ such that $r(x,y) = \bar{r}(xy)$ and $\bar{r}'(z) = -z \eta'(z)$. In particular, for some $c>0$, we have
\[ \bar{r}(z) - c = \int_0^z \eta(s) \, ds - z \eta(z). \]
We define the function $H\colon \mathbb R/\mathbb Z\times D^2_\ve\rightarrow M$ for some $\ve>0$ by
\[ H(t,x,y) = \varphi^{t\bar{r}(xy)}(h(\psi^{-t}(x,y))). \]
We note that $H$ is well defined (see \eqref{eq: periodic H}).

\begin{lemma}
$H^*\alpha = (\bar{r}(xy) + xy\, \eta(xy))dt+\frac{1}{2}(x\,dy-y\,dx).$
\end{lemma}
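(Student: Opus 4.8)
The plan is to compute $H^*\alpha$ directly by pulling back $\alpha$ through the definition $H(t,x,y) = \varphi^{t\bar r(xy)}(h(\psi^{-t}(x,y)))$, using the flow-invariance of $\alpha$ under the Reeb flow $\Phi$ and the explicit formulas already established. I would split a tangent vector at $(t,x,y)$ into its $\partial/\partial t$ component and its $D^2_\ve$ component and handle the two pieces separately, exactly as in the proof of Lemma~\ref{lem:canonnical-section}.

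First I would handle the $\partial/\partial t$ direction. Differentiating $H(t,x,y)$ in $t$ with $(x,y)$ fixed gives two terms: the derivative of the exponent $t\bar r(xy)$ times the Reeb field $X$ at $H(t,x,y)$, and the $t$-derivative of $h(\psi^{-t}(x,y))$ pushed forward by $d\varphi^{t\bar r(xy)}$. The first term contributes $\bar r(xy)$ after applying $\alpha$ (since $\alpha(X)\equiv 1$). For the second term, using flow-invariance $(\varphi^{t\bar r(xy)})^*\alpha = \alpha$, the evaluation of $\alpha$ reduces to evaluating $h^*\alpha = \beta = \frac12(x\,dy-y\,dx)$ on $\frac{d}{dt}\psi^{-t}(x,y) = -Y(\psi^{-t}(x,y)) = -\eta(xy)Y_0(\psi^{-t}(x,y))$; here I use that $xy$ is constant along orbits of $\psi^t$. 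Since $\beta(Y_0) = -xy$ (which is already noted in the proof of Lemma~\ref{lem:dr-formula}) and $\psi^t$ preserves $\beta$, this contributes $-(-\eta(xy))(-xy) = xy\,\eta(xy)$, wait—more carefully, $\beta(-\eta(xy)Y_0) = -\eta(xy)\cdot(-xy) = xy\,\eta(xy)$ after accounting for the $\psi^{-t}$-invariance of $\beta$. So the $dt$-coefficient of $H^*\alpha$ is $\bar r(xy) + xy\,\eta(xy)$, matching the claim.

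Next I would handle a tangent vector $V$ in the $D^2_\ve$ directions (with $t$ fixed). Then $dH(V)$ has a component along $X$ coming from $d(t\bar r(xy))(V) = t\,\bar r'(xy)\,d(xy)(V)$ times $X$, plus $d\varphi^{t\bar r(xy)}$ applied to $dh(d\psi^{-t}(V))$. Applying $\alpha$ and using invariance again, I get $t\,\bar r'(xy)\,d(xy)(V) + \beta(d\psi^{-t}(V)) = t\,\bar r'(xy)\,d(xy)(V) + ((\psi^{-t})^*\beta)(V)$. Now $(\psi^{-t})^*\beta = \beta - \int_0^{-t}\frac{d}{ds}(\psi^s)^*\beta\,ds = \beta + \int_{-t}^0 (\psi^s)^*(\mathcal L_Y\beta)\,ds$, and from Lemma~\ref{lem:dr-formula}'s computation $\mathcal L_Y\beta = -xy\,d\eta$, which is $\psi^s$-invariant, so $(\psi^{-t})^*\beta = \beta - t(-xy\,d\eta)\cdot(-1)$... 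I need to track the sign: $(\psi^{-t})^*\beta = \beta + t\cdot xy\,d\eta$. On the $D^2_\ve$ factor, $d\eta = \eta'(xy)d(xy)$ and $\bar r'(xy) = -xy\,\eta'(xy)$, so $t\,\bar r'(xy)\,d(xy) + t\,xy\,\eta'(xy)\,d(xy) = 0$. Hence the $D^2_\ve$-part of $H^*\alpha$ is just $\beta = \frac12(x\,dy - y\,dx)$, completing the identification.

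The main obstacle is bookkeeping the signs and the pullback-of-$\beta$ calculation correctly, particularly the interplay between the $t\bar r'(xy)$ correction term coming from differentiating the exponent and the drift in $(\psi^{-t})^*\beta$; these must cancel exactly, and the cancellation is precisely the relation $\bar r'(z) = -z\eta'(z)$ from Lemma~\ref{lem:dr-formula}, so this lemma is doing the real work. Everything else is a routine application of flow-invariance of $\alpha$ and the already-established fact that $\beta(Y_0) = -xy$ together with $\psi^t$-invariance of both $\beta$ and $-xy\,d\eta$. I would present the computation by evaluating $H^*\alpha$ on the coordinate frame $\partial/\partial t, \partial/\partial x, \partial/\partial y$ and reading off the coefficients.
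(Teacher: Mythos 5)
Your proposal is correct and follows essentially the same route as the paper: evaluate $H^*\alpha$ on the coordinate frame, use Reeb-invariance of $\alpha$, get the $dt$-coefficient from $\beta(Y_0)=-xy$, and kill the extra transverse terms via $\mc L_Y\beta=-xy\,d\eta$ together with $\bar r'(z)=-z\eta'(z)$ (the paper shows the $t$-derivative of $t\,dr+(\psi^{-t})^*\beta$ vanishes and evaluates at $t=0$, whereas you integrate to get $(\psi^{-t})^*\beta=\beta+t\,xy\,d\eta$ and cancel directly --- the same computation). One phrase to fix: $\psi^t$ does \emph{not} preserve $\beta$ (only $-xy\,d\eta$ and the function $xy$ are invariant), but your argument never actually needs that claim, since $\beta(Y)=-xy\,\eta(xy)$ depends only on $xy$.
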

\begin{proof}
Since $H^*\alpha$ is a 1-form on $\R / \Z \times D^2_\ve$, it must be a linear combination of $dt$, $dx$ and $dy$ at every point. It therefore suffices to evaluate $\alpha$ on $H_*(\del / \del t)$, $H_*(\del / \del x)$ and $H_*(\del / \del y)$ to compute these coefficients.

First, we compute the $dt$ coefficient. Fix a point $(t,x,y)$ and consider the curve $s \mapsto (t+s,x,y)$ in $\R / \Z \times D^2_\ve$. Then $H_*(\del / \del t)$ is equal to
\begin{multline*} \left.\frac{d}{ds}\right|_{s= 0} H(t+s,x,y) = \left.\frac{d}{ds}\right|_{s= 0} \varphi^{t\bar{r}(xy) + s\bar{r}(xy)}h \psi^{-s-t}(x,y) \\= \bar{r}(xy)X(H(t,x,y)) - d\varphi^{t\bar{r}(xy)} dh (Y(\psi^{-t}(x,y))).
\end{multline*}
Hence,
\begin{multline*}
    \alpha(H_*(\del / \del t)) = \bar{r}(xy) - \frac{1}{2}(x \, dy - y \, dx)\left[\eta(xy)\left(x \frac{\del}{\del x} - y \frac{\del}{\del y}\right)\right] = \bar{r}(xy) + xy\, \eta(xy).
\end{multline*}
Now we compute the $dx$ and $dy$ coefficients. As in the proof of Lemma \ref{lem:dr-formula}, we let $\beta = h^*\alpha = \frac{1}{2}(x \, dy - y \, dx)$. We wish to show that $dx$ and $dy$ coefficients are always $-\frac{1}{2}y$ and $\frac{1}{2}x$, respectively. We will do this by showing they are independent of $t$, and computing them when $t = 0$.

Fix $v \in \R^2$ and consider the curve $s\mapsto (t,((x,y)+sv)$ in $\mathbb R/\mathbb Z\times D^2_\varepsilon$. Then,
\begin{multline*}\left.\frac{d}{ds}\right|_{s= 0} H(t,(x,y)+sv) = \left.\frac{d}{ds}\right|_{s= 0} \varphi^{tr((x,y)+sv)}h \psi^{-t}((x,y)+sv) \\
= tdr_{(x,y)}(v) X(H(t,x,y)) + d\varphi^{t\bar{r}(xy)}dhd\psi^{-t}(v).
\end{multline*} 
Here, we use $dr_{(x,y)}$ to emphasize that we are evaluating the form $dr$ at vector $v$ based at the point $(x,y)$. Evaluating $\alpha$ on $\left. \frac{d}{ds}\right|_{s= 0} H(t,(x,y)+sv)$ yields the function
\[ tdr_{(x,y)}(v) + (\psi^{-t})^*h^*\alpha_{(x,y)}(v) = t dr_{(x,y)}(v) + (\psi^{-t})^*\beta_{(x,y)}. \]
Taking the derivative with respect to $t$ yields
\[ dr_{(x,y)}(v) {\color{red}-}(\psi^{-t})^*\mc L_Y\beta_{(x,y)} = dr_{(x,y)}(v) - (\psi^{-t})^*(-xy \, d\eta)(v)\]
Since $(\psi^{-t})^*(-xy \, d\eta) = -xy\, d\eta$ by \eqref{eq:eta-Y-inv}, we conclude from Lemma \ref{lem:dr-formula} that the derivative in the $t$-coordinate is 0. Hence, we may compute the form when $t= 0$, so $\alpha_{(t,x,y)}(v) = \beta_{(x,y)}(v)$ for all $(t,x,y)$.
\end{proof}

\subsection{Rigidity for Anosov contact flows}\label{sec:rigidity}
In this section, we prove Corollaries~\ref{coro C}~and~\ref{coro D}.

\begin{proof}[The proof of Corollary \ref{coro C}]
      Assume that such sections $\Sigma_p$ exist at every periodic orbit $p$. This implies that all derivatives of the roof function $r$ up to order four vanish. Since the resonance terms of the roof function $r$ cannot be modified through cohomology (Lemma \ref{lem:cocycle-coefficeints}), this implies that for the modified section constructed in Lemma~\ref{lemma: nice section}, $\bar{r}''(0) = \del_x^2\del_y^2r(\vec 0) = 0$.
    The result is now immediate from Remark \ref{rem:r-derivatives}, Proposition \ref{prop:cocycle-data}, Theorem \ref{theorem HK} and Theorem~\ref{theoreme ghys}.
\end{proof}

\begin{proof}[The proof of Corollary \ref{coro D}]
    If $S$ has constant curvature, one can find sections $\Sigma_\gamma$ such that $\varphi^T\Sigma_\gamma \cap \Sigma_\gamma$ is an open subset of\ both $\Sigma_\gamma$ and $\varphi^T\Sigma_\gamma$, as computed in Example \ref{ex:homogeneous}. The other direction follows immediately from Corollary \ref{coro C} and Theorem \ref{theoreme ghys}. 
\end{proof}

\section{Base-Roof Rigidity for contact flows}\label{sec: base-roof rigidity}

In this section we prove Theorems \ref{thm E} and \ref{thm:linear-rigidity}, which establish a new rigidity phenomenon for contact flows in which the dynamics of the return map determine properties of the return time, and vice-versa. We propose to call this feature ``base-roof rigidity,'' terminology which appears from thinking about suspension flows.

\subsection{The proof of Theorem \ref{thm E}}
Inspired by the key formula built in Lemma \ref{lem:dr-formula},
we investigate the connection among the roof
function, the return maps and contact forms further.  In this subsection, we give the proof of Theorem \ref{thm E}. 

\begin{lemma}
\label{lem:contact-uniqueness}
    Let $\alpha_0$ and $\alpha_1$ be two contact forms on $C = \R / \Z \times D^2_\ve$, and assume that

    \begin{itemize}
        \item $\R / \Z \times \set{0}$ is a periodic orbit for both $\alpha_0$ and $\alpha_1$,
        \item the Reeb flows induced by $\alpha_0$ and $\alpha_1$ coincide, and
        \item $\alpha_0 \wedge d\alpha_0 = \alpha_1 \wedge d\alpha_1$.
    \end{itemize}

    Then $\alpha_0 = \Psi^*\alpha_1$ for some $C^\infty$ diffeomorphism $\Psi$.
\end{lemma}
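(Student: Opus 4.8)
\medskip

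The plan is to use a Moser-type path method, interpolating between $\alpha_0$ and $\alpha_1$ by a family of contact forms with constant Reeb field and constant contact volume, and producing the diffeomorphism $\Psi$ as the time-1 map of a flow tangent to the Reeb direction. First I would record the structural consequences of the three hypotheses. Write $X$ for the common Reeb vector field. The difference $\beta := \alpha_1 - \alpha_0$ satisfies $\beta(X) = \alpha_1(X) - \alpha_0(X) = 0$, so $\beta$ annihilates $X$. Moreover, $\iota_X d\alpha_i = 0$ for $i=0,1$ gives $\iota_X d\beta = 0$. Finally, the equality $\alpha_0 \wedge d\alpha_0 = \alpha_1 \wedge d\alpha_1$ together with $\alpha_i(X)=1$ forces the two area forms $d\alpha_0$ and $d\alpha_1$ to restrict to the same area form on any surface transverse to $X$ (concretely, using the normal form of Lemma~\ref{lemma: nice section} or a tubular-neighborhood coordinate in which $X = \partial/\partial t$, one checks that $d\alpha_0$ and $d\alpha_1$ differ only by a term involving $dt$). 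A clean way to package this: in coordinates where $X = \partial_t$ on $C$, one has $\alpha_i = dt + \lambda_i$ after a fibrewise identification is not quite available, but one can still write $\alpha_i = a_i\,dt + \beta_i$ with $\iota_{\partial_t}$ eating the $dt$ piece; the hypotheses then say $a_i \equiv$ the same normalization and the volume condition pins down $d\beta_i$ on slices.

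\medskip

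Next I would set $\alpha_s = \alpha_0 + s\beta = (1-s)\alpha_0 + s\alpha_1$ for $s \in [0,1]$ and check that each $\alpha_s$ is a contact form with Reeb field $X$ and the same contact volume. That $\alpha_s(X) = 1$ and $\iota_X d\alpha_s = 0$ is immediate from linearity and the observations above, so if $\alpha_s \wedge d\alpha_s \ne 0$ then its Reeb field is forced to be $X$; and $\alpha_s \wedge d\alpha_s = \alpha_0\wedge d\alpha_0$ follows because the cross terms vanish (each term contains either two copies of a $dt$-like factor or reproduces the common slice area form against $dt$ — this is the computation that $\alpha_0\wedge d\beta + \beta\wedge d\alpha_0 + s\,\beta\wedge d\beta$ vanishes identically, which one verifies using $\beta(X)=0$, $\iota_Xd\beta=0$ and the volume hypothesis). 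Shrinking $\ve$ if necessary guarantees non-degeneracy of $\alpha_s$ for all $s \in [0,1]$, since non-degeneracy is an open condition and holds at the endpoints.

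\medskip

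Then I would run Moser's argument: seek a time-dependent vector field $Z_s$ with flow $\Psi_s$, $\Psi_0 = \id$, such that $\Psi_s^*\alpha_s = \alpha_0$ for all $s$. Differentiating gives the homological equation $\mc L_{Z_s}\alpha_s + \frac{d}{ds}\alpha_s = 0$, i.e. $\iota_{Z_s}d\alpha_s + d(\iota_{Z_s}\alpha_s) + \beta = 0$. I would look for $Z_s$ in the plane field $\ker\alpha_s$, so that $\iota_{Z_s}\alpha_s = 0$ and the equation reduces to $\iota_{Z_s}d\alpha_s = -\beta$. Since $d\alpha_s$ restricted to $\ker\alpha_s$ is a non-degenerate $2$-form (contact condition) and $\beta$ annihilates $X$ hence descends to a $1$-form on $\ker\alpha_s$, this uniquely determines $Z_s \in \ker\alpha_s$ — this is exactly the place the contact (non-degeneracy) hypothesis is used, and it is the technical heart of the argument. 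One must check $Z_s$ depends smoothly on $s$ and is defined on a fixed neighborhood of $\R/\Z\times\set{0}$; here I would note that $Z_s$ vanishes along the periodic orbit (since $\beta$ vanishes there to the relevant order, both $\alpha_i$ agreeing on $T\mc O$), so its flow is defined up to time $1$ on a possibly smaller solid torus, and the flow fixes the orbit. Setting $\Psi = \Psi_1$ gives $\Psi^*\alpha_1 = \alpha_0$, as desired. The main obstacle I anticipate is not the Moser step itself but verifying cleanly that the volume-equality hypothesis really does force all the cross terms in $\alpha_s\wedge d\alpha_s$ to cancel and that $\beta$ vanishes to high enough order along $\mc O$ to integrate $Z_s$; both should follow from writing everything in the tubular coordinates of Lemma~\ref{lemma: nice section} and bookkeeping the $dt$-components.
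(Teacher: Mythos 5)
Your route is genuinely different from the paper's, and most of it is sound. The ``bookkeeping'' you defer is in fact clean: since $X$ is nonvanishing, a $3$-form vanishes iff its contraction with $X$ does, so $\beta(X)=0$ and $\iota_Xd\beta=0$ give $\beta\wedge d\alpha_0=\beta\wedge d\beta=0$, and then the volume hypothesis forces $\alpha_0\wedge d\beta=0$, i.e.\ $d\beta=0$; hence $\alpha_s\wedge d\alpha_s=\alpha_0\wedge d\alpha_0$ for all $s$, every $\alpha_s$ is contact on all of $C$ with Reeb field $X$ (no shrinking or openness argument needed), and $\iota_{Z_s}d\alpha_s=-\beta$ with $Z_s\in\ker\alpha_s$ is uniquely solvable because $\ker d\alpha_s=\R X$ and $\beta(X)=0$. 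The paper instead takes the Moser field along the Reeb direction, $V=fX$: the homological equation collapses to $df+\beta=0$, so one only needs $\beta$ exact, which follows from $d\beta=\iota_X(\alpha_1\wedge d\alpha_1-\alpha_0\wedge d\alpha_0)=0$ together with $\int_{\mc O}\beta=0$ (as $\beta(X)\equiv0$ and $\mc O$ generates $\pi_1(C)$).

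The genuine gap is the integration of $Z_s$ to time $1$. Your justification that $Z_s$ vanishes along the orbit --- ``$\beta$ vanishes there \dots both $\alpha_i$ agreeing on $T\mc O$'' --- is not valid: $T\mc O$ is spanned by $X$, so agreement on $T\mc O$ is exactly the statement $\beta(X)=0$ and says nothing about $\beta$ as a covector at points of $\mc O$. The lemma does not assume the orbit is hyperbolic, and without hyperbolicity $\beta|_{\mc O}$ can be nonzero: take $\alpha_0=dt+\frac{1}{2}(x\,dy-y\,dx)$ and $\alpha_1=\alpha_0+dx$ on $C$; all three hypotheses hold, $\beta=dx$, and solving your equation gives $Z_s=-\frac{x}{2}\frac{\del}{\del t}+\frac{\del}{\del y}$, which pushes the core circle out of $D^2_\ve$ in time $\ve$, so $\Psi_1$ is not defined on any neighborhood of $\mc O$ inside $C$ (while the paper's $\Psi$, the time-$1$ map of $-x\,\del/\del t$, works fine here). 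If one adds hyperbolicity, flow-invariance of the $\alpha_i$ forces $\ker\alpha_0=\ker\alpha_1=E^{ss}\oplus E^{uu}$ along $\mc O$, hence $\beta|_{\mc O}=0$, and your argument can be repaired on a smaller solid torus --- but that is an extra hypothesis and a different justification from the one you gave. The paper's choice $V=fX$ sidesteps the issue entirely: $V$ is tangent to the Reeb orbits, so the isotopy preserves $\mc O$ and is defined up to time $1$ near $\mc O$, with no smallness or hyperbolicity required.
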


\begin{proof}
    Let $X$ denote the common Reeb field for $\alpha_0$ and $\alpha_1$ and $\beta = \alpha_1 - \alpha_0$. Then since $\alpha_i(X) \equiv 1$ and $X \in \ker d\alpha_i$ for $i=1,2$, we get that $X \in \ker \beta$ and $X \in \ker d\beta$. Define $\alpha_t = \alpha_0 + t\beta$, and note that this notation is consistent with our initial indexing of $\alpha_0$ and $\alpha_1$.

    We apply the Moser trick to produce the conjugacy $\Psi$. Indeed, we will produce this by finding a $C^\infty$ function $f$ such that if $V = fX$, and $\Psi^t$ is the time-$t$ map of the flow generated by $V$, then $(\Psi^t)^*\alpha_t = \alpha_0$. Indeed, this occurs if and only if $\frac{d}{dt}(\Psi^t)^*\alpha_t \equiv 0$. We compute
    \begin{multline*}
         \frac{d}{dt}(\Psi^t)^*\alpha_t = (\Psi^t)^*(\mc L_V\alpha_t + \frac{d}{dt}\alpha_t) = (\Psi^t)^*(d\iota_V\alpha_t + \iota_Vd\alpha_t + \beta) \\= (\Psi^t)^*(d\iota_{fX}(\alpha_0+t\beta) + \iota_{fX}d(\alpha_0+t\beta) + \beta) \\ =(\Psi^t)^*( d(f\cdot 1 + 0) + 0 + \beta) = (\Psi^t)^*(df + \beta).
    \end{multline*}
    Therefore, we may pick $f$ to make this identically 0
 if and only if $\beta$ is exact. But notice that since $\alpha_0\wedge d\alpha_0 = \alpha_1 \wedge d\alpha_1$,
 \[0 = \iota_{X}(\alpha_1 \wedge d\alpha_1 - \alpha_0 \wedge d\alpha_0) = d\alpha_1 - d\alpha_0 = d\beta.\]
Therefore, $\beta$ is closed. Since $C$ is homotopy equivalent to a circle, it suffices to check that $\int \beta = 0$ over any generator of $\pi_1(C)$. In particular, we may use the periodic orbit. However, since $\beta(X) \equiv 0$, it vanishes on the periodic orbit. Hence, $\beta$ is exact, and we conclude the Lemma.
 \end{proof}

\begin{remark}
    In the case of a hyperbolic fixed points, local normal forms are conjugated if their Taylor expansions coincide \cite[Theorem 6.6.5]{KH}. However, when both maps preserve the standard Lebesgue measure, it is unlikely that such a conjugacy preserves it. Since the contact forms ``see'' the volume, we expect that coincidence of the Taylor expansions to be insufficient to conclude the contact forms are related by pullback without an assumption that the volumes coincide. 
\end{remark}

We now proceed to prove Theorem \ref{thm E}. This is done over the course of three lemmas. In Lemma \ref{pullback-to-data}, we show that if two contact forms are related by pullback, all of the dynamical data is preserved. In particular, assumption (1) of Theorem \ref{thm E} implies both assumptions (2) and (3). In Lemma \ref{return-to-pullback}, we show that if (2) of Theorem \ref{thm E} is satisfied, then (1) is. Finally, in Lemma \ref{time-to-pullback}, we show that assumption (3) implies (1).

\begin{lemma}
    \label{pullback-to-data}
  For $i=1,2$, let $\Phi_i=(\phi_i^t)$ be the $C^\infty$ Reeb flow associated to a contact form $\alpha_i$ in a neighborhood $U_i$ of its hyperbolic periodic orbit $\mc O_i$.  
Assume that there exists $H\colon U_1 \to U_2$ such that $H^*\alpha_2 = \alpha_1$. Then there exist sections $\Sigma_1$ and $\Sigma_2$ of the associated Reeb flows transverse to $\mc O_1$ and $\mc O_2$, respectively, such that:

    \begin{itemize}
        \item $H(\Sigma_1) = \Sigma_2$;
        \item $\mc O_1$ and $\mc O_2$ have the same prime period and Lyapunov exponents;
        \item The Poincar\`{e} maps $F_1$ and $F_2$ are conjugated;
        \item There exist charts $h_i\colon D^2_\ve \to \Sigma_i$, $i= 1,2$ such that $h_i^*\alpha_i = \frac{1}{2}(x \, dy - y \, dx)$;
        \item If $\hat{r}_i\colon \Sigma_i \to \R$ denotes the return time of the Reeb flows, and $r_i = \hat{r}_i \of h_i$, then $r_1 = r_2$.
    \end{itemize}
\end{lemma}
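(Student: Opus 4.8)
The plan is to transport the distinguished section constructed in Lemma~\ref{lemma: nice section} through the conjugacy $H$. First I would apply Lemma~\ref{lemma: nice section} to $\alpha_1$ to obtain a section $\Sigma_1$ through a point $p_1 \in \mc O_1$ with a chart $h_1\colon D^2_\ve \to \Sigma_1$ satisfying $h_1^*\alpha_1 = \tfrac12(x\,dy - y\,dx)$, $h_1^*d\alpha_1 = dx\wedge dy$, and Poincar\'e map $F_1(x,y) = (e^{\eta_1(xy)}x, e^{-\eta_1(xy)}y)$ with $\eta_1(0) > 0$. Then I would set $\Sigma_2 := H(\Sigma_1)$, $p_2 := H(p_1) \in \mc O_2$ (note $H$ carries $\mc O_1$ to $\mc O_2$, since $H$ intertwines the Reeb flows — see below), and $h_2 := H \of h_1\colon D^2_\ve \to \Sigma_2$. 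Because $H^*\alpha_2 = \alpha_1$, we immediately get $h_2^*\alpha_2 = h_1^*H^*\alpha_2 = h_1^*\alpha_1 = \tfrac12(x\,dy - y\,dx)$, which gives the fourth bullet; and $H(\Sigma_1) = \Sigma_2$ is the first bullet by definition.

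Next I would verify that $H$ conjugates the Reeb flows. This is the one genuinely structural point: $H^*\alpha_2 = \alpha_1$ implies $H^*d\alpha_2 = d\alpha_1$, so $H$ carries the kernel of $d\alpha_1$ to that of $d\alpha_2$ and preserves the normalization $\alpha_i(X_{\alpha_i}) = 1$; by the uniqueness in the implicit definition of the Reeb vector field, $H_*X_{\alpha_1} = X_{\alpha_2}$, hence $H \of \phi_1^t = \phi_2^t \of H$ on a neighborhood. From this, $H$ sends $\mc O_1$ to a periodic orbit of $\Phi_2$ through $p_2$, which must be $\mc O_2$, and the prime periods coincide. Since $H$ is a diffeomorphism intertwining the flows and sending $\Sigma_1$ to $\Sigma_2$, it also conjugates the Poincar\'e maps: $F_2 = (H|_{\Sigma_1})\of F_1 \of (H|_{\Sigma_1})^{-1}$, giving the third bullet; in particular $h_2 \of F_1' \of h_2^{-1} = F_2$ where $F_1' := h_1 \of F_1^{\mathrm{chart}} \of h_1^{-1}$, so in the chart $h_2$ the Poincar\'e map for $\alpha_2$ is exactly $(x,y)\mapsto(e^{\eta_1(xy)}x, e^{-\eta_1(xy)}y)$ — i.e.\ $\eta_2 = \eta_1$ in these coordinates. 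For the Lyapunov exponents: the return time for $\alpha_1$ to $\Sigma_1$ in the chart $h_1$ is $\bar r_1(xy)$ by Lemma~\ref{lem:dr-formula}, and the Reeb-flow conjugacy plus $H(\Sigma_1)=\Sigma_2$ forces the return time for $\alpha_2$ to $\Sigma_2$ at $h_2(x,y)$ to equal the return time for $\alpha_1$ to $\Sigma_1$ at $h_1(x,y)$ (both are the first positive $t$ with $\phi_i^t$ returning to the section, and $H$ intertwines everything); this gives the last bullet $r_1 = r_2$. The nonzero Lyapunov exponents at $\mc O_i$ are $\pm\eta_i(0)/\bar r_i(0)$ by Proposition~\ref{prop:cocycle-data} (or directly from the explicit flow formula \eqref{eq: flow in coord} applied to $X_{\theta}$ in \eqref{normalcontactvector}), and since $\eta_1 = \eta_2$ and $\bar r_1 = \bar r_2$ as functions of $xy$, they agree.

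The main obstacle — really the only place requiring care — is bookkeeping the identification "return time for $\alpha_2$ to $\Sigma_2$ at $h_2(x,y)$ equals return time for $\alpha_1$ to $\Sigma_1$ at $h_1(x,y)$." One must check that $H$ being a flow-conjugating diffeomorphism with $H(\Sigma_1) = \Sigma_2$ really does identify the first-return times pointwise: if $\phi_1^{r_1(q)}(q) \in \Sigma_1$ is the first return of $q$, then $\phi_2^{r_1(q)}(H(q)) = H(\phi_1^{r_1(q)}(q)) \in \Sigma_2$, and minimality of $r_1(q)$ transfers to minimality of the return time of $H(q)$ because $H$ is a bijection between orbit segments in $\Sigma_1$ and in $\Sigma_2$; hence the return time function for $\alpha_2$ pulled back by $H|_{\Sigma_1}$ equals that for $\alpha_1$, and composing with $h_1$ gives $r_2 = \hat r_2 \of h_2 = \hat r_2 \of H \of h_1 = \hat r_1 \of h_1 = r_1$. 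Once this identification is in hand, every bullet follows mechanically, so I expect the write-up to be short.
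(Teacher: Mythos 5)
Your proposal is correct and follows essentially the same route as the paper: establish $H_*X_{\alpha_1}=X_{\alpha_2}$ from $H^*\alpha_2=\alpha_1$, take a distinguished section and chart for $\alpha_1$ with $h_1^*\alpha_1=\frac{1}{2}(x\,dy-y\,dx)$, push everything forward by $H$, and transfer the first-return times via the flow conjugacy. The only cosmetic difference is that you source the chart from Lemma~\ref{lemma: nice section} while the paper restricts the normal-form coordinates of Theorem~\ref{thm:contact-normal} to $\set{0}\times D^2_\ve$, which amounts to the same thing.
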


\begin{proof}
    This all follows from the fact that $H$ intertwines the Reeb flows of $\alpha_1$ and $\alpha_2$, which we denote by $\varphi_1^t$ and $\varphi_2^t$, respectively. Indeed, note that if $X_1$ is the Reeb field of $\alpha_1$, then $\alpha_2(H_*(X_1)) = H^*\alpha_2(X_1) = \alpha_1(X_1) \equiv 1$, and similarly $\iota_{H_*X_1}d\alpha_2 \equiv 0$. Then $H_*X_1 = X_2$ and $H$ conjugates $\varphi_1$ and $\varphi_2$. As a result, the prime period and Lyapunov exponents coincide. 

    Let $H_0\colon C \to U_1$ be such that $H_0^*\alpha_1$ is a resonance contact form, $H_0^*\alpha_1 = \theta_1(xy) \, dt + \frac{1}{2}(x \, dy - y \, dx)$. Let $\Sigma_1 = H_0(\set{0}\times D^2_{\ve})$ and $\Sigma_2 = H(\Sigma_1)$. By construction, since $H$ intertwines the Reeb flows, the Poincar\`{e} maps of $\Sigma_1$ and $\Sigma_2$, $F_1$ and $F_2$ are conjugated. Similarly, by construction of $H_0$, if $h_1 = H_0|_{D^2_\ve}$ and $h_2 = H \of h_1$, it follows that $h_1^*\alpha_1 = \frac{1}{2}(x \, dy  - y \, dx)$ and $h_2^*\alpha_2 = (H \of h_1)^*\alpha_2 = h_1^*H^*\alpha_2 = h_1^*\alpha_1 = \frac{1}{2}(x \,dy - y \, dx)$. Finally, note that if $\varphi_1^{r_1(\vec z)}(h_1(\vec z)) \in \Sigma_1$ for all $\vec z \in D^2_\ve$, then noting that $h_2 = H \of h_1$,

    \[ \varphi_2^{r_2(\vec z)}(h_2(\vec z)) = \varphi_2^{\hat{r}_2(h_2(\vec z))}(h_2(\vec z)) = \varphi_2^{\hat{r}_2(H(h_1(\vec z))} (H(h_1(\vec z))) = H(\varphi_1^{\hat{r}_2(H(h_1(\vec z)))}(h_1(\vec z)))\]

    Note that this belongs to $\Sigma_2$ if and only if $\varphi^{\hat{r}_2(H(h_1(\vec z)))}_1(h_1(\vec z)) \in \Sigma_1$. The smallest such value is by definition $r_1(\vec z)$. Hence, $r_2(\vec z) = r_1(\vec z)$ for all $\vec z\in D^2_\ve$.
    \end{proof}

\begin{lemma}
    \label{return-to-pullback}
    For $i=1,2$, let $\Phi_i=(\phi_i^t)$ be the $C^\infty$ Reeb flow associated to a contact form $\alpha_i$ in a neighborhood $U_i$ of its hyperbolic periodic orbit $\mc O_i$. Let $\Sigma_i$ be a section of $\Phi_i$ transverse to $\mc O_i$, $i=1,2$. If the prime periods of $\mc O_1$ and $\mc O_2$ coincide, and the Poincar\`{e} return maps of $\Sigma_1$ and $\Sigma_2$ are conjugated by a $C^\infty$ diffeomorphism $H_\Sigma\colon \Sigma_1\rightarrow \Sigma_2$ such that ${H_\Sigma}^*d\alpha_2 = d\alpha_1$, then there exists $H\colon U_1 \to U_2$ defined in a neighborhood of $\mc O_1$ such that $H^*\alpha_2 = \alpha_1$. 
\end{lemma}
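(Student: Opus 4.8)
The plan is to build $H$ by first constructing matching coordinate systems on $U_1$ and $U_2$ using the previous normal-form machinery, and then correcting the residual discrepancy via Lemma~\ref{lem:contact-uniqueness}. First I would use Theorem~\ref{thm:sternberg} together with Lemma~\ref{lemma: nice section} to replace $\Sigma_1$ by a section on which $\alpha_1$ pulls back to $\frac{1}{2}(x\,dy-y\,dx)$ and the Poincar\'e map $F_1$ is in resonance form $F_1(x,y)=(e^{\eta_1(xy)}x,e^{-\eta_1(xy)}y)$; the hypothesis that the return maps are conjugated by $H_\Sigma$ with $H_\Sigma^*d\alpha_2=d\alpha_1$ lets me transport this section to $\Sigma_2$: set $h_2=H_\Sigma\of h_1$, so that $h_2^*d\alpha_2=dx\wedge dy$ and $h_2^*F_2 = F_1$ in these coordinates. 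However $h_2^*\alpha_2$ need not equal $\frac{1}{2}(x\,dy-y\,dx)$; it is a primitive of $dx\wedge dy$ invariant in the appropriate sense, and by repeating the argument of Lemma~\ref{lem:canonnical-section} (solving $d\tau = -a\,dx-b\,dy$ using $H^1_{dR}(D^2_\ve)=0$) I can slide $\Sigma_2$ along the Reeb flow of $\alpha_2$ so that, after this adjustment, $h_2^*\alpha_2=\frac{1}{2}(x\,dy-y\,dx)$ as well, while the Poincar\'e map stays conjugate to $F_1$ (moving a section along the flow does not change the conjugacy class of the return map) — I would need to check the adjusted section's return map is still exactly $F_1$, which follows because the correction is by the time of the Reeb flow and the return map of a shifted section is conjugate by that same time map, which one arranges to be trivial on the level of the normal form since both return maps are already in the identical resonance form.

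Next I would apply Lemma~\ref{lem:dr-formula} on each side: with both $h_i^*\alpha_i=\frac12(x\,dy-y\,dx)$ and the return maps equal to the \emph{same} map $F$ with the same $\eta$, the return times satisfy $dr_i=-xy\,d\eta$, hence $r_1$ and $r_2$ agree up to an additive constant; but that additive constant is the prime period of $\mc O_i$, which coincide by hypothesis, so $r_1=r_2=:\bar r(xy)$. Now I invoke the coordinate construction of Section~\ref{sec:contact-build}: the maps $H_i(t,x,y)=\varphi_i^{t\bar r(xy)}(h_i(\psi^{-t}(x,y)))$ are diffeomorphisms of $C=\R/\Z\times D^2_\ve$ onto tubular neighborhoods $V_i\subset U_i$ of $\mc O_i$, and by the lemma proved there, $H_i^*\alpha_i = (\bar r(xy)+xy\,\eta(xy))\,dt+\frac12(x\,dy-y\,dx)$ — the \emph{same} resonance contact form on both sides, since $\bar r$ and $\eta$ are the same. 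Therefore $H := H_2\of H_1^{-1}\colon V_1\to V_2$ satisfies $H^*\alpha_2 = (H_1^{-1})^*H_2^*\alpha_2 = (H_1^{-1})^*(H_1^*\alpha_1)=\alpha_1$, which is the desired map (after shrinking to genuine open neighborhoods of $\mc O_1$).

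I expect the main obstacle to be the bookkeeping in the second paragraph of the construction: ensuring that after sliding $\Sigma_2$ along the Reeb flow of $\alpha_2$ to normalize $h_2^*\alpha_2$, the Poincar\'e map of the new section is \emph{literally} the map $F$ in resonance form and not merely conjugate to it, and that the chart $h_2$ can be taken in Birkhoff--Sternberg normal form simultaneously with $\alpha_2$ pulling back correctly. This is where one must be careful that the two normalizations (of the contact form and of the return map) are compatible; concretely, one first puts $F_2$ in resonance form via a chart $\tilde h_2$, then the contact-form normalization of Lemma~\ref{lemma: nice section} is achieved by a flow-time adjustment that, by Lemma~\ref{lem:dr-formula}'s computation ($dr$ determined by $d\eta$), does not disturb the resonance form of the return map. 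If there is any residual freedom left — a centralizing transformation of $F$ that is not the identity — one absorbs it either directly or, as a fallback, by appealing to Lemma~\ref{lem:contact-uniqueness} with $\alpha_0 = H^*\alpha_2$ and $\alpha_1$: these share a Reeb flow and, since $H_\Sigma^*d\alpha_2=d\alpha_1$ forces the volume forms $\alpha_i\wedge d\alpha_i$ to match via Proposition~\ref{prop:cocycle-data}(6), they are related by a further pullback, which one composes into $H$.
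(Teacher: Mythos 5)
Your proposal is correct and follows essentially the same route as the paper: normalize the section and the pullback of $\alpha_1$ on side one, transport via $H_\Sigma$, slide $\Sigma_2$ along the Reeb flow (Lemma~\ref{lem:canonnical-section}) to normalize $h_2^*\alpha_2$ without changing the return map written in coordinates, then use the common resonance form $F$ together with Lemma~\ref{lem:dr-formula} and the equal prime periods to identify $\bar r$ and $\eta$ on both sides, and finally compose the Section~\ref{sec:contact-build} coordinate maps $H_2\of H_1^{-1}$. The only cosmetic difference is that the paper invokes Theorem~\ref{thm:contact-normal} directly on $U_1$ (reducing to $\Sigma_1$ being the normal-form section via the flow projection, which preserves $d\alpha_1$), while you assemble the same data section-first; your fallback via Lemma~\ref{lem:contact-uniqueness} is unnecessary.
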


\begin{proof}
  By Theorem~\ref{thm:contact-normal}, we can choose a coordinate system $H_1\colon C \to U_1$ such that $H_1^*\alpha_1 = \theta_1(xy)\, dt + \frac{1}{2}(x \, dy - y \, dx)$, where $C=\mathbb R/\mathbb Z\times D^2_\ve$ for some $\ve>0$. By Proposition~\ref{prop:cocycle-data}(4), the first return map $F_1$ to section $H_1(\{0\}\times D^2_\ve)$ is time-1 map of the vector field $V_{\theta_1} = {\theta_1}'(xy)(x \, \frac{\partial}{\partial x} - y \, \frac{\partial}{\partial y})$. We note that the Poincar\`{e} return maps of any two sections $\Sigma_1$ and $\tilde \Sigma_1$ transverse to $\mc O_1$ at the same point are always locally conjugated by a map $\pi_{\Sigma_1,\tilde\Sigma_1}\colon \Sigma_1\rightarrow \tilde\Sigma_1$ defined through moving along the Reeb flow from $\Sigma_1$ to $\tilde \Sigma_1$. Moreover, the Reeb flow direction is in the kernel of $d\alpha_1$ so $\pi_{\Sigma_1,\tilde\Sigma_1}^*d\alpha_1|_{\tilde\Sigma_1}=d\alpha_1|_{\Sigma_1}$. Thus, we may assume without loss of generality that $\Sigma_1 = H_1(\set{0} \times D^2_\ve)$. Denote $h_1 = H_1|_{\set{0}\times D^2_\ve}$.

    Let $H_\Sigma\colon \Sigma_1 \to \Sigma_2$ be a map as in the lemma for some section $\Sigma_2$. Then $h_2 := H_\Sigma \of h_1$ puts the first return map of $\Phi_2$ to section $\Sigma_2$ into Birkhoff normal form with the same return map $F_1$. Furthermore, ${h_2}^*d\alpha_2 = h_1^*H_\Sigma^*d\alpha_2 = h_1^*d\alpha_1 = dx \wedge dy$. It follows from Lemma \ref{lem:canonnical-section} that one may modify the section $\Sigma_2$ by moving along the flow to produce a parameterized section which has the same expression of first return map written using that parametrization, and such that the pullback of $\alpha_2$ under this parametrization is  $\frac{1}{2}(x \,dy - y\, dx)$. Thus, we can assume that $h_2^*\alpha_2=\frac{1}{2}(x \,dy - y\, dx)$. Since the first return map to $\Sigma_2$ is now given by $F_1$ in the coordinates provided by $h_2$, following Sections \ref{sec:contact-cocycle} and \ref{sec:contact-build}, one builds a coordinates $H_2\colon C \to U_2$ around $\mc O_2$ such that $H_2^*\alpha_2 = \theta_1(xy)\, dt + \frac{1}{2}(x\, dy - y\, dx)$. Then $H = H_2 \of H_1^{-1}$ satisfies $H^*\alpha_2 = (H_1^{-1})^*H_2^*\alpha_2 = (H_1^{-1})^*(\theta_1(xy)\, dt + \frac{1}{2}(x\, dy - y \,dx))= \alpha_1$.
\end{proof}

\begin{lemma}
    \label{time-to-pullback}
    For $i=1,2$, let $\Phi_i=(\phi_i^t)$ be the $C^\infty$ Reeb flow associated to a contact form $\alpha_i$ in a neighborhood $U_i$ of its hyperbolic periodic orbit $\mc O_i$. Let $\Sigma_i$ be a section of $\Phi_i$ transverse to $\mc O_i$, $i=1,2$.
    Assume that
    
    \begin{itemize}
        \item $\mc O_1$ and $\mc O_2$ have the same Lyapunov exponents,
        \item there exist parameterizations $h_i\colon D^2_\ve \to \Sigma_i$ such that $h_i^*\alpha_i = \frac{1}{2}(x \, dy - y \, dx)$,
        \item the first return maps of the Reeb flows are in resonance form in the coordinates provided by $h_i$.\footnote{Such parameterizations always exist by Lemma \ref{lemma: nice section}}
    \end{itemize} Then if $\hat{r}_i$ is the return time of the section $\Sigma_i$, $r_i = \hat r_i \of h_i$, and $r_1 = r_2$, there exists $H\colon U_1 \to U_2$ defined in a neighborhood of $\mc O_1$ such that $H^*\alpha_2 = \alpha_1$.
\end{lemma}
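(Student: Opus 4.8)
The plan is to feed each of the two given sections into the coordinate construction underlying Theorem~\ref{thm:contact-normal}, to argue that the two resulting resonance contact forms on $C = \R / \Z \times D^2_\ve$ are in fact identical, and then to take the composition of the two normal-form charts as the required conjugacy $H$.

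First I would note that the hypotheses on $(\Sigma_i, h_i)$ are exactly the conclusion of Lemma~\ref{lemma: nice section}: we have $h_i^*\alpha_i = \tfrac12(x\,dy - y\,dx)$ (hence also $h_i^*d\alpha_i = dx\wedge dy$), and the first return map is in resonance form, $F_i(x,y) = (e^{\eta_i(xy)}x, e^{-\eta_i(xy)}y)$ with $\eta_i(0) > 0$. Therefore Lemma~\ref{lem:dr-formula} and the constructions of Sections~\ref{sec:contact-cocycle} and \ref{sec:contact-build} apply verbatim for each $i$: the return time is $r_i(x,y) = \bar r_i(xy)$ with $\bar r_i'(z) = -z\,\eta_i'(z)$, and the chart $H_i(t,x,y) = \varphi_i^{t\bar r_i(xy)}\big(h_i(\psi_i^{-t}(x,y))\big)$, where $\varphi_i$ is the Reeb flow of $\alpha_i$ and $\psi_i$ is the flow of $Y_i = \eta_i(xy)Y_0$, satisfies
\[ H_i^*\alpha_i = \theta_i(xy)\,dt + \tfrac12(x\,dy - y\,dx), \qquad \theta_i(z) := \bar r_i(z) + z\,\eta_i(z). \]

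The core of the argument is to show $\theta_1 = \theta_2$. Since $r_1 = \hat r_1 \of h_1 = \hat r_2 \of h_2 = r_2$ and each $r_i$ depends only on $xy$, we get $\bar r_1 = \bar r_2$ on a neighborhood of $0$. Comparing the identities $\bar r_i'(z) = -z\,\eta_i'(z)$ gives $z\big(\eta_1'(z) - \eta_2'(z)\big) = 0$, so $\eta_1' = \eta_2'$ away from $0$ and hence, by continuity of $\eta_i'$, everywhere near $0$; thus $\eta_1 - \eta_2$ is constant. To pin that constant to $0$ I would use the Lyapunov data: since $\bar r_i'(0) = 0$, one has $\theta_i(0) = \bar r_i(0)$ and $\theta_i'(0) = \eta_i(0)$, so by Proposition~\ref{prop:cocycle-data}(2) the nonzero Lyapunov exponents of $\mc O_i$ are $\pm\eta_i(0)/\bar r_i(0)$. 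As $\bar r_1(0) = \bar r_2(0)$ and the Lyapunov exponents coincide, $\eta_1(0) = \eta_2(0)$, hence $\eta_1 = \eta_2$ and $\theta_1 = \theta_2 =: \theta$. (Here $\theta(0) = \bar r_i(0) > 0$ and $\theta'(0) = \eta_i(0) > 0$ by hyperbolicity, so this is a genuine resonance contact form.)

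Finally, after shrinking the neighborhoods if necessary, I would set $H := H_2 \of H_1^{-1}$; then
\[ H^*\alpha_2 = (H_1^{-1})^* H_2^*\alpha_2 = (H_1^{-1})^*\big(\theta(xy)\,dt + \tfrac12(x\,dy - y\,dx)\big) = (H_1^{-1})^* H_1^*\alpha_1 = \alpha_1, \]
as desired. The only genuine work here is the short ODE/continuity argument that $\bar r_1 = \bar r_2$ forces $\eta_1' = \eta_2'$, combined with the use of the Lyapunov exponents to eliminate the additive constant; everything else is bookkeeping with the already-established normal-form construction. The one point that needs care is verifying that Sections~\ref{sec:contact-cocycle}–\ref{sec:contact-build} used nothing about $(\Sigma_i, h_i)$ beyond the two properties we are handed, so that the construction and the formula for $\theta_i$ transfer without change; this is exactly why the hypotheses were phrased to match Lemma~\ref{lemma: nice section}.
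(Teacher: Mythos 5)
Your proposal is correct and follows essentially the same route as the paper: both reduce to showing $\eta_1=\eta_2$ via $dr_i=-xy\,d\eta_i$ (so $\bar r_1=\bar r_2$ forces $\eta_1'=\eta_2'$, with the division by $xy$ justified exactly as in your continuity step), then eliminate the additive constant using the Lyapunov exponents $\pm\eta_i(0)/\bar r_i(0)$ together with $r_1(0)=r_2(0)$, and finally compose the two normal-form charts from Section~\ref{sec:contact-build} to obtain $H=H_2\of H_1^{-1}$ with $H^*\alpha_2=\alpha_1$. Your explicit identification $\theta_i(z)=\bar r_i(z)+z\eta_i(z)$ just makes precise what the paper summarizes as ``identical normal form coordinates.''
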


\begin{proof}
   Notice that our assumptions are exactly those which appear in Lemma~\ref{lemma: nice section}. 
   It follows that if $F_i(x,y) = (e^{\eta_i(xy)}x,e^{-\eta_i(xy)}y)$ are the first return maps of the sections $\Sigma_i$ in the coordinates provided by $h_i$, $-xy \,d\eta_1 = dr_1 = dr_2 = -xy\, d\eta_2$. Hence, $d\eta_1 = d\eta_2$ (we may divide by $-xy$ since the Foulon-Hasslelbatt cocycle vanishes). It follows that $\eta_1 = \eta_2 + c$. Since the positive Lyapunov exponent of $\mc O_i$ is given by $\eta_i(0)/r_i(0)$ for $i = 1,2$, and we know that $r_1(0) = r_2(0)$ by assumption, we conclude that $\eta_1(0) = \eta_2(0)$. It follows that $\eta_1 = \eta_2$.

    As in the previous Lemma, we conclude following Section \ref{sec:contact-build} that $\alpha_1$ and $\alpha_2$ have identical normal forms coordinates, and hence are related by pullback.
\end{proof}

\subsection{Linearizable contact forms}

In this subsection, we discuss contact forms that can be written in a special resonance form that we call {\it linear}. In particular, we present the proof of Theorem \ref{thm:linear-rigidity}. 


Consider $C = \R / \Z \times D^2_\ve$ with standard coordinates $(t,x,y)$. Let $\lambda>1$ and $r_0>0$ be some constants.

\begin{enumerate}
    \item A local diffeomorphism is linear if $F(x,y) = (\lambda x,\lambda^{-1}y)$ (i.e., if $\omega \equiv 1$)
    \item A conservative vector field on $C$ is linear if $X = r_0^{-1}\left(\frac{\del}{\del t} + \lambda x \frac{\del}{\del x} - \lambda y \frac{\del}{\del y}\right)$ (i.e., if $f$ and $g$ are constant functions)
    \item A hyperbolic contact form on $C$ is linear if $\alpha = (r_0 + \lambda xy)\, dt + \frac{1}{2}(x \, dy - y \, dx)$
\end{enumerate}

If there exist coordinates in which an object is linear, we call it {\it linearizable}. 

\begin{example}
    If $\Phi$ is a constant-time suspension of an Anosov diffeomorphism $f$, the roof function is linearizable for any section, and the vector field around periodic orbits is linearizable if and only if corresponding periodic orbit of $f$ is linearizable.
\end{example}

\begin{example} \label{ex:homogeneous}
    Consider a quotient of $SL(2,\R)$ by a discrete group $\Gamma$, and assume that $g\Gamma \in SL(2,\R)/\Gamma$ is a periodic orbit of the geodesic flow $g_t = \begin{pmatrix}
        e^{t/2} & 0 \\ 0 & e^{-t/2}
    \end{pmatrix}$ (i.e., the left translation action by the subgroup $A = \set{ g_t : t \in \R}$). Without loss of generality, since the flow is homogeneous, we may assume that $g = e$, in which case we can conclude that $g_T(e\Gamma) = e\Gamma$, and $\gamma =\begin{pmatrix}
        e^{T/2} & 0 \\ 0 & e^{-T/2}
    \end{pmatrix} \in \Gamma$. Consider the local section determined by the following matrices: \[p(x,y) = \begin{pmatrix}
        \sqrt{1+xy} & x \\
        y & \sqrt{1+xy}
    \end{pmatrix} \qquad\Sigma = \set{ p(x,y) : \abs{x},\abs{y} < 1}.\] Then the return time of the section is constant and equal to $T$, since a direct computation gives \[ g_T p(x,y) = p(e^Tx,e^{-T}y)\gamma \sim p(e^Tx,e^{-T}y).\]
    Further computation show that the coordinates $H(x,y,t) = \begin{pmatrix}
        e^{tT/2}\sqrt{1+xy} & e^{-tT/2}x \\ e^{tT/2}y & e^{-tT/2}\sqrt{1+xy}
    \end{pmatrix}$, makes the right action of $\gamma$ is indeed translation in the $t$ coordinate by $1$. Furthermore, using that $\ker\alpha = E^s \oplus E^u$ and $\alpha(X) \equiv 1$, one obtains that the corresponding contact form is the linear one
    \[ \alpha_0 = T(1 + xy)\, dt + \frac{1}{2}(x \, dy - y \, dx).\]
\end{example}

\begin{lemma}
\label{lem:std-area-form}
    If $\omega$ is a $C^\infty$ nonvanishing 2-form on $D_\ve^2$ invariant under $F(x,y) = (\lambda x,\lambda^{-1}y)$, there exists $0 < \delta < \ve$ and $C^\infty$ diffeomorphism $H\colon D_\delta^2 \to D_\ve^2$ such that $H$ commutes with $F$ and $H^*\omega = dx \wedge dy$.
\end{lemma}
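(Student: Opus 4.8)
The plan is to use Moser's trick equivariantly with respect to the linear hyperbolic map $F(x,y)=(\lambda x,\lambda^{-1}y)$. First I would record that the standard form $\omega_0 = dx\wedge dy$ is $F$-invariant (since $\det dF \equiv 1$), so both $\omega$ and $\omega_0$ are $F$-invariant nonvanishing $2$-forms. Write $\omega = \kappa\,\omega_0$ for a nowhere-vanishing $C^\infty$ function $\kappa$, which is $F$-invariant: $\kappa\of F = \kappa$. After shrinking and possibly multiplying by a constant we may assume $\kappa(\vec 0)=1$ and $\kappa>0$. Since $\kappa$ is $F$-invariant and $F$ is the linear hyperbolic map, $\kappa$ is constant along the hyperbolas $xy=\mathrm{const}$; indeed the orbit of any point accumulates on both axes, and by continuity $\kappa$ agrees with its value on the axes, forcing $\kappa$ to be a $C^\infty$ function of $xy$ alone, say $\kappa(x,y)=m(xy)$ with $m(0)=1$, $m>0$. (This is the analogue of Proposition \ref{nfc}/Lemma \ref{lem:cocycle-coefficeints} in this setting: the $F$-invariant data is exactly the functions of $xy$.)

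Next I would interpolate: set $\omega_s = (1-s)\omega_0 + s\,\omega = (1 + s(m(xy)-1))\,\omega_0$ for $s\in[0,1]$. Each $\omega_s$ is $F$-invariant, nonvanishing near $\vec 0$ (shrink to $D^2_\delta$ so that $1+s(m(xy)-1)>0$ throughout), and $\frac{d}{ds}\omega_s = (m(xy)-1)\,\omega_0 =: \dot\omega_s$, which is closed (it is a top-degree form on a surface) and $F$-invariant. I want a time-dependent vector field $V_s$, $F$-invariant in the sense $F_*V_s = V_s$, solving $\iota_{V_s}\omega_s = -\beta$ where $d\beta = \dot\omega_s$; then $\mc L_{V_s}\omega_s = d\iota_{V_s}\omega_s = -d\beta = -\dot\omega_s$, and the flow $H_s$ of $V_s$ satisfies $\frac{d}{ds}H_s^*\omega_s = 0$, giving $H_1^*\omega = \omega_0$. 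The key points are (i) choosing $\beta$ with $\beta(\vec 0)=0$ so the flow is defined near $\vec 0$ (here $\dot\omega_s = d\beta$ with $\beta$ a primitive vanishing at the origin exists since $\dot\omega_s$ is exact on the disc and we can subtract a constant $1$-form), and (ii) arranging $F$-equivariance of $V_s$ so that $H_s$ commutes with $F$.

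For equivariance I would exploit that $F^*\omega_s = \omega_s$ and $F^*\dot\omega_s = \dot\omega_s$, and choose $\beta$ itself to be $F$-invariant. Concretely, $\dot\omega_s = (m(xy)-1)\,dx\wedge dy$; since $m(z)-1 = z\,\tilde m(z)$ for a smooth $\tilde m$, one checks that $\beta_s := \tfrac12\big(n(xy)\big)(x\,dy - y\,dx)$ with $n$ an appropriate primitive-type function of $xy$ is a smooth $F$-invariant primitive of $\dot\omega_s$ vanishing at $\vec 0$ — this mirrors the computation $d\big(\tfrac12 c(xy)(x\,dy-y\,dx)\big)$ producing a function of $xy$ times $dx\wedge dy$ that appears repeatedly in Section \ref{sec:normal form for contact}. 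Then $V_s$ defined by $\iota_{V_s}\omega_s = -\beta_s$ is automatically $F$-invariant because $\omega_s$ and $\beta_s$ are, and its time-$1$ flow $H := H_1$ commutes with $F$, is a $C^\infty$ diffeomorphism from some $D^2_\delta$ onto its image inside $D^2_\ve$, and satisfies $H^*\omega = dx\wedge dy$.

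The main obstacle I anticipate is the bookkeeping to guarantee that $H$ is defined on a full neighborhood $D^2_\delta$ of the origin for all $s\in[0,1]$ and lands inside $D^2_\ve$: this requires a uniform bound on $V_s$ near $\vec 0$, which follows from $\beta_s(\vec 0)=0$ together with $\omega_s$ being uniformly nondegenerate on a small disc, so $V_s$ vanishes to first order at the origin and its flow stays in a controlled neighborhood. The rest — invariance of $\omega_0$ under $F$, the identification of $\kappa$ as a function of $xy$, and the Moser homotopy computation — is routine given the tools already developed in the paper.
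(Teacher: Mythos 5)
Your overall strategy (an $F$-equivariant Moser trick, interpolating between $\omega$ and $dx\wedge dy$ and integrating an $F$-invariant time-dependent vector field) is the same as the paper's, but the step on which your construction of the equivariant primitive rests is false. You claim that an $F$-invariant smooth density $\kappa$ must be a function of $xy$ because ``the orbit of any point accumulates on both axes, and by continuity $\kappa$ agrees with its value on the axes.'' This is not true: for a point with $xy\neq 0$ the orbit $(\lambda^n x,\lambda^{-n}y)$ leaves any disc $D^2_\ve$ in both time directions and accumulates on the axes only at infinity, so no continuity argument applies. Concretely, take a bump function $\rho$ supported in a small ball $B$ around a point $(x_0,y_0)$ with $x_0y_0\neq 0$, chosen so that the sets $F^n(B)$, $n\in\Z$, are pairwise disjoint; then $\phi=\sum_{n\in\Z}\rho\of F^n$ is a finite sum on $D^2_\ve$ (all but finitely many $F^{-n}(B)$ miss the disc, since $|xy|$ is bounded below on them), hence $C^\infty$, $F$-invariant, vanishes near the axes, and is not a function of $xy$; so $\kappa=1+\phi$ is an admissible invariant density for which your reduction fails. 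Since your explicit primitive $\beta_s=\tfrac12 n(xy)(x\,dy-y\,dx)$ exists only under that false structural claim, the equivariance of $V_s$ -- the heart of the lemma (``$H$ commutes with $F$'') -- is not established, and this is a genuine gap rather than a bookkeeping issue.

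The repair is exactly what the paper does: keep the Moser interpolation $\omega_t=[(1-t)g+t]\,dx\wedge dy$ for a general $F$-invariant positive density $g$, and build the equivariant primitive by hand, seeking $Y_t=f_t\,\del/\del x$ with
\[
f_t(x,y)=\dfrac{1}{(1-t)g+t}\left(\int_0^x g(s,y)\,ds-x\right),
\]
so that $d\bigl(((1-t)g+t)f_t\,dy\bigr)=(g-1)\,dx\wedge dy$. The needed equivariance is the scaling relation $f_t\of F=\lambda f_t$, which follows from $F$-invariance of $g$ and the change of variables $\int_0^{\lambda x}g(s,\lambda^{-1}y)\,ds=\lambda\int_0^x g(t,y)\,dt$; no claim about the structure of $g$ (beyond invariance and positivity) is required. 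With that substitution your Moser computation and the domain/shrinking discussion go through.
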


\begin{proof}
    Consider a 2-form $\omega = g \, dx \wedge dy$ for some strictly positive $g \in C^\infty(D_\ve^2)$. Note that $\omega$ is $F$-invariant if and only if $g$ is $F$-invariant. 
    
    We apply the Moser trick. We will build a flow $\psi^t$ generated by a bounded time-dependent vector field $Y_t$ such that if $\omega_t = [(1-t)g + t] \,dx \wedge dy$, then $\omega = (\psi^t)^*\omega_t$. This occurs if and only if $\mc L_{Y_t}\omega_t +(1-g)\, dx \wedge dy = 0$, or $d\left(\iota_{Y_t}[(1-t)g+ t]\, dx \wedge dy\right) = (g-1)\, dx \wedge dy$. We seek an $F$-invariant solution of the form $Y_t = f_t\, \del /\del x$ in which case we seek to solve
    \begin{eqnarray*} d\left((1-t)g+t)f_t \,dy\right) &  = & (g-1)\, dx \wedge dy \\
    \frac{\del\left[((1-t)g+t)f_t\right]}{\del x} \, dx \wedge dy & = & (g-1)\,dx \wedge dy\end{eqnarray*}
    Let $f_t(x,y)$ be defined by
    \[f_t(x,y) = \dfrac{1}{(1-t)g+t}\left(\int_0^xg(s,y)\,ds-x\right)\]
    Then $\del([(1-t)g+t)f_t]/\del x = g(x,y) - 1$, so $f_t$ solves the desired equation. $f_t$ is well-defined and $C^\infty$ since $g$ is always positive and integration only improves regularity. Furthermore, if $Y_t = f_t \, \del / \del x$, $F_*Y_t = Y_t$ if and only if $f_t \of F = \lambda f_t$. Notice that $g$ is $F$-invariant so the fraction term of $f_t$ is $F$-invariant. Furthermore, after change of coordinates and again using $F$-invariance of $g$, the integral term satisfies
    \[ \int_0^{\lambda x} g(s,\lambda^{-1}y)\,ds - \lambda x = \int_0^x g(\lambda t,\lambda^{-1}y)\cdot \lambda \, dt - \lambda x = \lambda\left(\int_0^xg(t,y)\,dt-x\right)\]
    It follows that $f_t \of F = \lambda f_t$, and hence we have shown the lemma.
\end{proof}
\begin{remark}\label{rem:nonlin-centralizer}We note here that Lemma~\ref{lem:std-area-form} does not work for nonlinear $F$, which is shown in the following example. In particular, we do not believe that the assumptions of Theorem~\ref{thm E} can be weakened to those of Theorem \ref{thm:linear-rigidity} in general.
\end{remark}
\begin{example}\label{ex:nonlinear-centralizer}
    Let $F(x,y)=(2x+x^2y, \frac{y}{2+xy})$ on $\mathbb R^2$. Since $\det(DF)\equiv 1$, $DF$ preserves the following volume forms $c\, dx\wedge dy$ for any conestant $c>0$. In particular, it preserves, the volume form $2 \, dx\wedge dy$. One should not think of this as the same volume form, as the contact form determines the area form on the transversal completely, not just up to scalar multiple.
    We claim that there is no $C^\infty$ diffeomorphism such that $H\circ F=F\circ H$ and $H^{\ast}(2 \,dx\wedge dy)=dx\wedge dy$. 
    
    First, we note that  $\det(DH(x,y))=\frac{1}{2}$ as $H^{\ast}(2\, dx\wedge dy)=dx\wedge dy$.

    Moreover, since $F^n(x,0)=(2^nx,0)$ and $F^n(0,y)=(0,2^{-n}y)$ for all $n\in\mathbb N$, we obtain that $H(x,0)=(f_1(x),0)$ and $H(0,y)=H(0,f_2(y))$ for all $x,y\in\mathbb R$ for some smooth functions $f_1,f_2$ on $\mathbb R$ such that $f_1(0)=f_2(0)=0$. Then, we can write $H(x,y)=(H_1(x,y),H_2(x,y))$ in the form $$H(x,y)=(xh_1(x,y), yh_2(x,y)),$$ 
    where $h_1(x,y) = H_1(x,y)/x = \int_0^1\frac{\partial H_1}{\partial x}(sx,y) \, ds$ and $h_2(x,y) =  H_2(x,y)/y =\int_0^1\frac{\partial H_2}{\partial y}(x,sy) \, ds$. In particular, $h_1,h_2$ are $C^\infty$ functions since $H_1(0,y) = 0 = H_2(x,0)$ and $H_1$ and $H_2$ are both $C^\infty$.
From 
$H\circ F=F\circ H$, we obtain the following relation:
$$(2+xy)h_1\left(2x+x^2y, \frac{y}{2+xy}\right)=2h_1(x,y)+xh_1^2(x,y)yh_2(x,y).$$
Differentiating with respect to $x$ gives us
\begin{eqnarray*}
&&yh_1\left(2x+x^2y, \frac{y}{2+xy}\right)+(2+xy)\Bigg[\frac{\partial h_1}{\partial x}\left(2x+x^2y, \frac{y}{2+xy}\right)(2+2xy) \\
&&\hspace{3.5cm} -\frac{\partial h_1}{\partial y}\left(2x+x^2y, \frac{y}{2+xy}\right)\frac{y^2}{(2+xy)^2}\Bigg]\\
&&=2\frac{\partial h_1}{\partial x}(x,y)+h^2_1(x,y)yh_2(x,y)+x\frac{\partial\left(h_1^2(x,y)yh_2(x,y)\right)}{\partial x}(x,y).
\end{eqnarray*}
Then differentiating above equation with respect to $y$ and taking value at $(0,0)$ gives us 
$$h_1(0,0)=h_1(0,0)^2h_2(0,0),$$
which gives us either $h_1(0,0)=0$ or $h_1(0,0)h_2(0,0)=1$ so 
    $\det(DH(0,0))=0$ or $1$, respectively, which is a contradiction to $\det(DH(x,y))=\frac{1}{2}$. 
\end{example}

In the other direction, the following example illustrates that when the structures are $C^\infty$ tangent to linear models, we can solve the corresponding cohomology equations (i.e., this is a case of Theorem \ref{thm:linear-rigidity}).

\begin{example}
    Let $\theta(z) = 1+z+e^{-1/z^2}$ and $$\alpha = \theta(xy)\, dt + \frac{1}{2}(x\,dy - y\, dx)= (1+xy+e^{-1/x^2y^2})\, dt +\frac{1}{2}(x \, dy - y \, dx).$$ Then the return time is given by $r(x,y) = \theta(xy) -xy\theta'(xy) = 1+e^{-1/x^2y^2}(1-2/x^3y^3) =\bar{r}(xy)$, where $\bar{r}(z) = e^{-1/z^2}(1-2/z^3)$. Then $r$ is cohomologous to a constant, with transfer function $u(x,y) = -\dfrac{\bar{r}(xy)-1}{\theta'(xy)}\log \abs{x}$, since $u \of F = -\dfrac{\bar{r}(xy)-1}{\theta'(xy)}[\log (\abs{xe^{\theta'(xy)}})]$, so $u \of F - u = \bar{r}(xy)-1$.
\end{example}

\begin{proof}[Proof of Theorem \ref{thm:linear-rigidity}]
    We show that (a) implies (d) implies (e) implies (c) implies (b) implies (a). That (a) implies (d) is immediate from Proposition \ref{prop:cocycle-data} and the definition of a linearizable contact form. (d) implies (e) is immediate.

    We now show that (e) implies (c). Indeed, if some section $\Sigma_0$ has its roof function cohomologous to a function which is $C^\infty$ tangent to a constant, then $\Sigma_\tau$ has its roof function cohomologous   to a function which is $C^\infty$ tangent to a constant with transfer function $\tau$ (where $\Sigma_\tau$ is as in \eqref{eq: sigma_tau}). In particular, for {\it any} smooth transverse section, the roof function is $C^\infty$ cohomologous to a function $C^\infty$ tangent to a constant.
    
    Putting the contact form in the normal form of Theorem \ref{thm:contact-normal}, we see that the corresponding function $\bar{r}(z) = \theta(z) - z\theta'(z)$ is $C^\infty$ tangent to a constant by Lemma \ref{lem:cocycle-coefficeints}. If $\theta$ has Taylor jet $\theta(z) \approx \sum_{k=0}^\infty a_kz^k$, then $\bar{r}(z) \approx a_0 + \sum_{k=1}^\infty (1-k)a_kz^k$. It follows that if $\bar{r}$ is $C^\infty$ tangent to a constant, then $a_k = 0$ for $k \ge 2$. Therefore, $\theta'(z)$ is $C^\infty$ tangent to a constant, and the first return map is $(x,y) \mapsto (e^{\theta'(xy)}x,e^{-\theta'(xy)}y)$ is $C^\infty$ tangent to the linear map. Thus we conclude (c).
    
    That (c) implies (b) is exactly \cite[Theorem 6.6.5]{KH}.

    Finally, we show that (b) implies (a). Assume that the first return map for some parameterized section $H_0\colon D^2_\ve \to \Sigma$ is linear. Then $d\alpha|_\Sigma$ is invariant under $F$, so by Lemma~\ref{lem:std-area-form}, one may pick a local diffeomorphism  $H_1\colon D^2_\delta \to D^2_\ve$ which commutes with $F$ such that $H_1^*H_0^*d\alpha|_\Sigma = dx \wedge dy$. It follows that if $H = H_0 \of H_1$, then the first return map induced by $H$ is still $F$, and $H^*d\alpha|_\Sigma = dx \wedge dy$. It follows from the equivalence of (1) and (2) in Theorem \ref{thm E} that some pullback of $\alpha$ is a linear form (simply pick the linear form with the correct return map and period as a model). Therefore, $\alpha$ is linearizable, and (b) implies (a). 
\end{proof}

\bibliographystyle{alpha}
\bibliography{refs}
\end{document}